\documentclass[11pt,leqno]{amsart}
\usepackage{amssymb,amsmath,amsthm,amsfonts,color}
\usepackage{graphicx}

\usepackage{hyperref}

\setlength{\textwidth}{6.5in}
\setlength{\oddsidemargin}{-0.in}
\setlength{\evensidemargin}{-0.in}
\setlength{\textheight}{8.5in}
\setlength{\topmargin}{-0.5in}

\setcounter{tocdepth}{1}

\usepackage[dvipsnames]{xcolor}

\DeclareMathOperator{\spa}{span}

\DeclareMathOperator{\diam}{diam}

 \DeclareMathOperator{\SRA}{SRA}

\newcommand{\field}[1]{\mathbb{#1}}

\newcommand{\N}{\field{N}}                      
\newcommand{\R}{\field{R}}                      
\newcommand{\C}{\field{C}}                      
\newcommand{\Heis}{\field{H}}                    

\newcommand{\eps}{\epsilon}
\newcommand{\vareps}{\varepsilon}

\newcommand{\snowflake}{{\scriptstyle{snow}}}
\newcommand{\sra}{{\mbox{SRA}}}

\newcommand{\cD}{{\mathcal D}}

\newcommand{\cR}{{\mathcal R}}
\newcommand{\cS}{{\mathcal S}}

\newcommand{\bolde}{{\mathbf e}}
\newcommand{\bI}{{\mathbf I}}

\newcommand{\bp}{{\mathbf p}}

\newcommand{\bw}{{\mathbf w}}

\newcommand{\abs}[1]{\lvert #1 \rvert}

\def\Barint_#1{\mathchoice
          {\mathop{\vrule width 6pt height 3 pt depth -2.5pt
                  \kern -8pt \intop}\nolimits_{#1}}%
          {\mathop{\vrule width 5pt height 3 pt depth -2.6pt
                  \kern -6pt \intop}\nolimits_{#1}}%
          {\mathop{\vrule width 5pt height 3 pt depth -2.6pt
                  \kern -6pt \intop}\nolimits_{#1}}%
          {\mathop{\vrule width 5pt height 3 pt depth -2.6pt
                  \kern -6pt \intop}\nolimits_{#1}}}

\theoremstyle{plain}
\newtheorem{theorem}{Theorem}
\newtheorem{corollary}[theorem]{Corollary}
\newtheorem{lemma}[theorem]{Lemma}
\newtheorem{proposition}[theorem]{Proposition}

\theoremstyle{definition}
\newtheorem{definition}[theorem]{Definition}
\newtheorem{example}[theorem]{Example}
\newtheorem{remark}[theorem]{Remark}

\newtheorem{question}[theorem]{Question}

\numberwithin{theorem}{section} 
\numberwithin{equation}{section}

\title[Small rough angles and rough self-contracting curves]{Metric spaces with small rough angles and the rectifiability of rough self-contracting curves}
\author{Estibalitz Durand Cartagena and Jeremy T. Tyson}
\address{EDC: Departamento de Matem\'atica Aplicada, ETS de Ingenieros Industriales, UNED, 28040 Madrid, Spain}
\email{{\tt edurand@ind.uned.es}}
\address{JTT: University of Illinois at Urbana-Champaign, Department of Mathematics, 1409 West Green St.\ Urbana, IL 61801 USA}
\email{{\tt tyson@illinois.edu}}
\date{\today}
\thanks{{\bf Acknowledgements:} The research for this work was conducted while EDC was visiting the Department of Mathematics at University of Illinois at Urbana-Champaign during the spring semester of 2024. She gratefully acknowledges financial support for this research from the Fulbright Program, which is sponsored by the U.S. Department of State and the U.S.-Spain Fulbright Commission. EDC is partially supported by grant PID2022-138758NB-I00 (Spain). JTT acknowledges support from the Simons Foundation under grant \#852888.
In addition, this material is based upon work supported by and while JTT was serving as a Program Director at the U.S. National Science Foundation. Any opinion, findings, and conclusions or recommendations expressed in this material are those of the authors and do not necessarily reflect the views of the National Science Foundation.}

\begin{document}
\maketitle

\begin{abstract}
The {\it small rough angle} ($\SRA$) condition, introduced by Zolotov in {\tt arXiv:1804.00234}, captures the idea that all angles formed by triples of points in a metric space are small. In the first part of the paper, we develop the theory of metric spaces $(X,d)$ satisfying the $\SRA(\alpha)$ condition for some $\alpha<1$. Given a metric space $(X,d)$ and $0<\alpha<1$, the space $(X,d^\alpha)$ satisfies the $\SRA(2^\alpha-1)$ condition. We prove a quantitative converse up to bi-Lipschitz change of the metric. We also consider metric spaces which are $\SRA(\alpha)$ free (there exists a uniform upper bound on the cardinality of any $\SRA(\alpha)$ subset) or $\SRA(\alpha)$ full (there exists an infinite $\SRA(\alpha)$ subset). Examples of $\SRA$ free spaces include Euclidean spaces, finite-dimensional Alexandrov spaces of non-negative curvature, and Cayley graphs of virtually abelian groups; examples of $\SRA$ full spaces include the sub-Riemannian Heisenberg group, Laakso graphs, and Hilbert space. We study the existence or nonexistence of $\SRA(\eps)$ subsets for $0<\eps<2^\alpha-1$ in metric spaces $(X,d^\alpha)$ for $0<\alpha<1$.

In the second part of the paper, we apply the theory of metric spaces with small rough angles to study the rectifiability of roughly self-contracting curves. In the Euclidean setting, this question was studied by Daniilidis, Deville, and the first author using direct geometric methods. We show that in any $\SRA(\alpha)$ free metric space $(X,d)$, there exists $\lambda_0 = \lambda_0(\alpha)>0$ so that any bounded roughly $\lambda$-self-contracting curve in $X$, $\lambda \le \lambda_0$, is rectifiable. The proof is a generalization and extension of an argument due to Zolotov, who treated the case $\lambda=0$, i.e., the rectifiability of self-contracting curves in $\SRA$ free spaces.
\end{abstract}

\tableofcontents

\section{Introduction}

\subsection{Overview}

Our goals in this paper are twofold. First, we provide a gentle introduction to the theory of metric spaces with {\it small rough angles}, a condition introduced by Zolotov in \cite{Z}. The $\SRA(\alpha)$ condition, $0 \le \alpha < 1$, on a metric space $(X,d)$ is a strengthened form of the triangle inequality which implies, in particular, that all metric angles formed by triples of points in $X$ are bounded away from $\pi$ (quantitatively in terms of $\alpha$). In particular, we discuss the relationship between this condition and the snowflaking operation $(X,d) \mapsto (X,d^\eps)$, $0<\eps<1$. We go on to consider metric spaces $(X,d)$ which are either {\it $\SRA(\alpha)$ free} or {\it $\SRA(\alpha)$ full} for some $\alpha < 1$. The former condition requires the existence of a uniform upper bound on the cardinality of any $\SRA(\alpha)$ subset of $X$, while the latter condition asserts the existence of an infinite $\SRA(\alpha)$ subset. These conditions were also introduced by Zolotov in \cite{Z}, see also \cite{LOZ}. Euclidean space $\R^n$ is $\SRA(\alpha)$ free for any $n \in \N$ and any $\alpha<1$; this follows from a classical theorem of geometric combinatorics due to Erd\"os and F\"uredi. We present various examples of spaces with these properties, and we study the existence of large $\SRA(\alpha)$ subsets in snowflaked metric spaces.

Next, we establish the rectifiability of a class of metrically defined curves in $\SRA(\alpha)$ free metric spaces, known as {\it roughly self-contracting curves}. Self-contracting curves have been introduced by Daniilidis, Ley and Sabourau \cite{DLS} in connection with the theory of gradient flows for convex potential functions, and the rectifiability of such curves has been established by various authors in spaces of increasing generality. By now it is known that bounded, self-contracting curves are rectifiable in arbitrary Riemannian manifolds, and also in metric spaces satisfying suitable synthetic curvature bounds (\`a la Alexandrov). In \cite{Z}, Zolotov establishes the rectifiability of bounded, self-contracting curves in any metric space $(X,d)$ which is $\SRA(\alpha)$ free for some $\tfrac12 < \alpha < 1$. For each $\lambda \in [-1,1]$, the class of roughly $\lambda$-self-contracting curves is defined via a metric inequality, similar to the $\SRA(\alpha)$ condition but imposed only on ordered triples of points chosen along the curve. These classes interpolate between the class of geodesic curves and all possible curves, in the following sense: roughly $(-1)$-self-contracting curves are precisely the geodesics, while any curve is roughly $1$-self-contracting. A curve is roughly $0$-self-contracting if and only if it is self-contracting. We extend Zolotov's result to cover roughly $\lambda$-self-contracting curves for some positive choices of $\lambda$.

An important ancillary aim of this paper is to advertise $\SRA$ free metric spaces as natural objects for study within the framework of analysis in metric spaces. In this paper, we highlight connections to topics such as bi-Lipschitz embeddability, rectifiability, and the snowflaking operation. We also aim to increase the visibility of the results obtained by Zolotov in \cite{Z}, and especially to showcase the interesting methodology in his proofs. The second part of this paper is inspired heavily by the results and techniques in \cite{Z}.

\subsection{Statement of main results}

For $0\le \alpha \le 1$, a metric space $(X,d)$ is said to satisfy the {\em $\SRA(\alpha)$ condition}\footnote{The acronym $\SRA$ stands for {\em small rough angles}.} if $d(x,y) \le \max \{ d(x,z) + \alpha d(z,y) , \alpha d(x,z) + d(z,y) \}$ for all $x,y,z \in X$. The class of $\SRA(\alpha)$ spaces increases as $\alpha$ increases, with the $\SRA(0)$ condition coinciding with the well-known concept of {\it ultrametric} and the $\SRA(1)$ condition coinciding with the usual triangle inequality. Geometrically, the $\SRA(\alpha)$ condition on a metric space $(X,d)$ implies an upper bound on all metric angles formed by triples of points in $X$ (Remark \ref{rem:SRA-angles}). 

An alternate one-parameter family of conditions interpolating between the class of ultrametric spaces and the class of all metric spaces was studied by the second author and Wu in \cite{TW}. For $1<p<\infty$, $(X,d)$ is said to be an {\it $L^p$ metric space} if $d(x,y) \le (d(x,z)^p + d(z,y)^p)^{1/p}$ for all $x,y,z \in X$. When $p \to \infty$, the $L^p$ metric condition limits to the usual ultrametric condition $d(x,y) \le \max \{ d(x,z), d(z,y) \}$. For any $0<\eps<1$, the expression $d^\eps$ defines an $L^p$ metric on $X$, $p=\eps^{-1}$, and the transformation $(X,d) \to (X,d^\eps)$ is commonly referred to as the {\it snowflaking transformation} in the literature. In this paper, following the terminology in \cite{TW}, we call a metric space $(X,d)$ a {\it $p$-snowflake} if $d$ is bi-Lipschitz equivalent to an $L^p$ metric $d'$ on $X$. Building on observations by Le Donne, Rajala, and Walsberg \cite{LRW} and Zolotov \cite{Z}, we note that the metric space $(X,d^\alpha)$, $0<\alpha<1$, satisfies the $\SRA(2^{\alpha}-1)$ condition for any metric space $(X,d)$. The value $2^\alpha-1$ is best possible for such a conclusion with no further restrictions on $X$ (Example \ref{ex:arithmetic}). We devote some space to consideration of the converse assertion, namely, whether or not the validity of an $\SRA(\beta)$ condition implies that the underlying metric is an $L^p$ metric for some $p>1$. Such a conclusion is true for any finite set, but we give examples of infinite metric spaces for which such a converse statement is false. However, the situation is clarified if we allow for a bi-Lipschitz change of the metric. This is the content of our first main theorem.

\begin{theorem}\label{main-prop}
For any metric space $(X,d)$, the following conditions are quantitatively equivalent:
\begin{itemize}
\item[(i)] There exists a metric $d'$ on $X$ bi-Lipschitz equivalent to $d$ and $p>1$ so that $(X,(d')^p)$ is a metric space.
\item[(ii)] There exists a metric $d''$ on $X$ bi-Lipschitz equivalent to $d$ and $0\leq\alpha<1$ so that $(X,d'')$ satisfies the $\SRA(\alpha)$ condition.
\end{itemize}
\end{theorem}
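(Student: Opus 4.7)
The plan is to handle the two implications separately: (i)$\Rightarrow$(ii) is immediate from the snowflake-$\SRA$ connection already recorded, while (ii)$\Rightarrow$(i) requires passing from $\SRA(\alpha)$ to a quasi-metric via a power and then applying a classical metrization result.

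For (i)$\Rightarrow$(ii), suppose $(X,(d')^p)$ is a metric space with $p>1$. Then $d'$ equals the $(1/p)$-snowflake of the metric $(d')^p$, so by the already-noted fact that an $\epsilon$-snowflake of a metric satisfies $\SRA(2^\epsilon-1)$, the metric $d'$ satisfies $\SRA(2^{1/p}-1)$ with $2^{1/p}-1<1$. Taking $d'':=d'$ and $\alpha:=2^{1/p}-1$ yields (ii) with explicit quantitative dependence.

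For (ii)$\Rightarrow$(i), assume $d''$ is bi-Lipschitz to $d$ and satisfies $\SRA(\alpha)$. If $\alpha=0$, then $d''$ is already an ultrametric, so $(d'')^p$ remains an ultrametric (hence a metric) for every $p>1$ and we take $d':=d''$. Assume henceforth $\alpha\in(0,1)$. The first step is to verify that $\rho:=(d'')^p$ is a quasi-metric whose constant tends to $1$ as $p\to 1^+$. After relabeling so that $a:=d''(x,z)\geq d''(z,y)=:b$ and setting $t:=b/a\in[0,1]$, the $\SRA(\alpha)$ inequality $d''(x,y)\leq a+\alpha b$ together with optimizing the ratio $(1+\alpha t)^p/(1+t^p)$ (critical point $t=\alpha^{1/(p-1)}$) yields
\[
d''(x,y)^p \leq K_p\bigl(d''(x,z)^p+d''(z,y)^p\bigr),\qquad K_p:=\bigl(1+\alpha^{p/(p-1)}\bigr)^{p-1}.
\]
Since $\alpha<1$, $\alpha^{p/(p-1)}\to 0$, and hence $K_p\to 1$ as $p\to 1^+$.

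Now choose $p>1$ close enough to $1$ that $K_p$ lies below the threshold of Frink's classical metrization lemma, which guarantees that the chain distance
\[
\tilde\rho(x,y):=\inf\Bigl\{\textstyle\sum_{i=0}^{n-1}\rho(x_i,x_{i+1}): x_0=x,\ x_n=y\Bigr\}
\]
is a genuine metric on $X$ bi-Lipschitz equivalent to $\rho=(d'')^p$. Setting $d':=\tilde\rho^{1/p}$ completes the proof: as the $(1/p)$-snowflake of a metric, $d'$ is itself a metric; $\tilde\rho\asymp(d'')^p$ gives $d'\asymp d''$ after extracting $p$-th roots, so $d'$ is bi-Lipschitz to $d$; and $(d')^p=\tilde\rho$ is a metric by construction. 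The principal obstacle is obtaining the quantitative asymptotic $K_p\to 1$ sharp enough to fall below the Frink threshold; the Frink metrization step is a classical tool that handles the passage from quasi-metric to genuine metric.
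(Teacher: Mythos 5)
Your (i)$\Rightarrow$(ii) direction is correct and is exactly the paper's argument (Lemma \ref{lem:bestSRA}). The gap is in (ii)$\Rightarrow$(i), at the Frink step. Your computation that $\rho:=(d'')^p$ satisfies $\rho(x,y)\le K_p(\rho(x,z)+\rho(z,y))$ with $K_p=(1+\alpha^{p/(p-1)})^{p-1}\to 1$ is fine, but $K_p>1$ strictly for every $p>1$ and every $\alpha\in(0,1)$, and there is no ``Frink threshold'' strictly above $1$ for the sum-form quasi-metric constant: Frink's chain lemma requires $\rho(x,y)\le 2\max\{\rho(x,z),\rho(z,y)\}$, which corresponds to sum-form constant $\le 1$, i.e.\ to $\rho$ already being (essentially) a metric. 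The general principle you invoke --- sum-form constant close enough to $1$ implies the chain distance $\tilde\rho$ is comparable to $\rho$ --- is false: for $\rho(x,y)=|x-y|^s$ on $\R$ with $s>1$ the constant is $2^{s-1}$, arbitrarily close to $1$ as $s\to 1^+$, yet the chain distance is identically zero (subdivide into $n$ equal pieces and let $n\to\infty$). So as written the key step does not go through.

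The argument can be repaired, but it requires the Mac\'ias--Segovia/Paluszy\'nski--Stempak power trick rather than a direct appeal to Frink: raise $\rho$ to the further power $\beta=\log 2/\log(2K_p)<1$, so that $\rho^\beta=(d'')^{p\beta}$ satisfies the max-form inequality with constant exactly $2$; Frink then yields a genuine metric $D$ with $\tfrac14\rho^\beta\le D\le\rho^\beta$, and one must check that the net exponent satisfies $p\beta>1$ --- which it does, since $p\beta>1$ is equivalent to $\log 2>\log(1+\alpha^{p/(p-1)})$, i.e.\ to $\alpha^{p/(p-1)}<1$. With that extra step your route works and is genuinely different from the paper's: the paper instead shows (Lemma \ref{lemma:auxUNC}) that the $\SRA(\alpha)$ condition implies the $\delta$-uniform non-convexity condition with $\delta=\tfrac12\cdot\tfrac{1-\alpha}{1+\alpha}$, and then quotes the Tyson--Wu theorem (Proposition \ref{prop:TW}) that $\delta$-UNC spaces are $q$-snowflakes. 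Both routes are quantitative; yours, once repaired, stays entirely within the quasi-metric/metrization circle of ideas, while the paper's reduces to a previously established geometric criterion.
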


Recall that two metrics $d$ and $d'$ on a set $X$ are said to be {\em bi-Lipschitz equivalent} if there exists a constant $L>0$ so that
\[
\frac{1}{L} d(x,y) \leq d'(x,y) \leq L d(x,y) \qquad \forall \, x, y\in X.
\]

For a given choice of $\alpha \in [0,1)$, a metric space $(X,d)$ is said to be {\it $\SRA(\alpha)$ free} if there exists $N \in \N$ so that any $\SRA(\alpha)$ subset of $X$ has cardinality at most $N$, while $(X,d)$ is said to be {\it $\SRA(\alpha)$ full} if it contains an infinite $\SRA(\alpha)$ subset. A result in geometric combinatorics due to Erd\"os and F\"uredi (Proposition \ref{ErdosFuredi}) implies that $\R^n$ is $\SRA(\alpha)$ free for each $\alpha<1$. Other examples of $\SRA$ free spaces include finite-dimensional Alexandrov spaces of non-negative curvature and Cayley graphs of virtually abelian groups. Moreover, all $\SRA(\alpha)$ free spaces are doubling, \cite[Theorem 6]{LOZ}. On the other hand, spaces containing large snowflaked subsets are $\SRA$ full in view of Theorem \ref{main-prop}. For example, the sub-Riemannian Heisenberg group $\Heis^1$ is $\SRA(\tfrac12)$ full, since the $t$-axis in $\Heis^1$ is isometric to a snowflaked copy of the real line. Other examples of $\SRA$ full spaces include the standard Laakso graphs (see \cite{Laa} and Example \ref{example:laakso}). 
An interesting class of $\SRA(0)$ full examples is provided by a family of metric trees $T_t$ whose construction depends on a sequence of real numbers $t$ (Example \ref{example:metrictree}). How one chooses the sequence can influence the doubling property, the bi-Lipschitz embeddability of the space, and the possibility of finding a bounded unrectifiable self-contracting curve (Proposition \ref{prop:sequence-embeddings}).
Infinite-dimensional spaces also provide examples of $\SRA$ full spaces; for instance, Hilbert space is $\SRA(\tfrac12)$ full (Example \ref{example:hilbert}). 

Earlier, we noted that the snowflaked space $(X,d^\eps)$ satisfies the $\SRA(2^\alpha-1)$ condition, and that the value $2^\alpha-1$ was best possible for such a conclusion without further restrictions on $X$. However, in contrast with this fact we prove the following result on the existence of large $\SRA(\eps)$ subsets in snowflaked metric spaces for arbitrary positive choices of $\eps$.

\begin{theorem}\label{thm:main-cor-2}
Let $(X,d)$ be any metric space containing a nontrivial geodesic curve, and let $0<\alpha<1$. Then $(X,d^\alpha)$ is $\SRA(\eps)$ full for each $0<\eps<\alpha$.
\end{theorem}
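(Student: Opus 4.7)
The plan is to leverage the hypothesis that $X$ contains a nontrivial geodesic curve in order to embed a snowflaked interval isometrically into $(X,d^\alpha)$, and then to exhibit an infinite $\SRA(\eps)$ subset of this snowflaked interval as a lacunary (geometric) sequence.

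Fix $L>0$ and an isometric embedding $\gamma\colon [0,L] \to (X,d)$. The same map is isometric from $([0,L],|\cdot|^\alpha)$ into $(X,d^\alpha)$, so it suffices to produce an infinite $\SRA(\eps)$ subset of $([0,L],|\cdot|^\alpha)$. My candidate is
\[
S \;=\; \{\,Lc^{-n} : n\in\N_0\,\} \;\subset\; [0,L]
\]
for a ratio $c=c(\alpha,\eps)>1$ to be chosen. Next I would reduce the $\SRA(\eps)$ condition on $S$ to a one-variable inequality. Given three points $p_i>p_j>p_k$ of $S$ (with $i<j<k$), set $a=p_j-p_k$ and $b=p_i-p_j$. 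Among the three choices of distinguished midpoint $z$, the two with $z\in\{p_i,p_k\}$ reduce to $a^\alpha\le\cdots$ or $b^\alpha\le\cdots$ where the right-hand side already contains the summand $(a+b)^\alpha$, hence are automatic; the only nontrivial case is $z=p_j$, requiring
\[
(a+b)^\alpha \;\le\; \max\{\,b^\alpha+\eps a^\alpha,\; a^\alpha+\eps b^\alpha\,\}.
\]

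The key estimate on $S$ is a uniform bound $a/b\le 1/(c-1)$, obtained from a short computation with the geometric sums $p_j-p_k = L(c^{-j}-c^{-k})$ and $p_i-p_j=L(c^{-i}-c^{-j})$. In particular, taking $c\ge 2$ forces $a\le b$, and the condition collapses to
\[
(1+t)^\alpha - 1 \;\le\; \eps\, t^\alpha, \qquad t := a/b \in (0,\,1/(c-1)].
\]
To settle this I would use concavity of $s\mapsto s^\alpha$ (or the mean value theorem) to bound $(1+t)^\alpha-1\le \alpha t$, after which it suffices to ensure $\alpha t\le \eps t^\alpha$, equivalently $t\le (\eps/\alpha)^{1/(1-\alpha)}$. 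This final inequality exploits precisely the hypothesis $\eps<\alpha$, since then $(\eps/\alpha)^{1/(1-\alpha)}>0$. Choosing any $c \ge 1+(\alpha/\eps)^{1/(1-\alpha)}$ ensures the bound holds for every admissible triple in $S$, completing the proof.

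The only delicate step is the case analysis isolating the nontrivial $\SRA(\eps)$ inequality for collinear triples in the snowflake metric, together with the uniform ratio bound $a/b\le 1/(c-1)$ for a geometric sequence. Once those are in hand, the proof reduces to an elementary comparison between the linear and H\"older decays of $t$ versus $t^\alpha$ as $t\to 0^+$.
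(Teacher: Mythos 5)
Your proof is correct and follows essentially the same route as the paper's (Theorem \ref{th:SRA-subsets-of-snowflake-spaces} together with Corollary \ref{cor:main-cor-2}): transfer to a snowflaked interval via the geodesic, take a geometric sequence as the infinite $\SRA(\eps)$ set, and observe that only the medial inequality $(1+t)^\alpha\le 1+\eps t^\alpha$ is nontrivial. The sole difference is in how that inequality is verified: the paper chooses the ratio to be the exact root of $(1-x)^\alpha+\eps x^\alpha-1$ and analyzes critical points, while you take the ratio $c$ large enough and use the cruder concavity bound $(1+t)^\alpha\le 1+\alpha t$ together with the uniform estimate $t\le 1/(c-1)$ --- both verifications are valid.
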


In fact, we can prove an even stronger conclusion: any such metric space $(X,d^\alpha)$ contains not only an infinite subset satisfying the $\SRA(\eps)$ condition, but contains Cantor-type subsets of positive Hausdorff dimension satisfying that condition (Proposition \ref{prop:cantor-set-subsets}).

In the second part of the paper, we connect the theory of $\SRA$ free metric spaces to the rectifiability of roughly self-contracting curves, a class of curves introduced in \cite{DaDuDe}. A curve $\gamma:I\to(X,d)$ is called a {\em rough $\lambda$-self-contracting curve} if for every $t_{1}\leq t_{2}\leq t_{3}$ in $I$ we have
\[ 
d(\gamma(t_{2}),\gamma(t_{3}))\leq d(\gamma(t_{1}),\gamma(t_{3}))+\lambda
d(\gamma(t_{1}),\gamma(t_{2})).
\]
The class of roughly self-contracting curves is a natural generalization of the class of {\em self-contracted curves} ($\lambda=0$) introduced in \cite{DLS}. Due to the role of self-contraction in the theory of gradient flows for convex potentials, it is of interest to know when such 
curves are rectifiable. In the Euclidean setting, solutions to gradient flow systems governed by a convex or quasi-convex function are steepest descent curves that satisfy the self-contractedness property. The rectifiability of such curves is linked to the convergence of various central optimization algorithms in convex analysis or graph theory (see \cite{DLS}, \cite{DDDL}, \cite {DaDuDe}, and the references therein). It is worth highlighting that the theory of gradient flows has also been studied in a purely metric context, through a variational inequality based on a metric characterization of the curves of maximum slope (see the seminal book by Ambrosio, Gigli, and Savar\'e \cite{AmGiSa}). In certain metric spaces, such as ${\mbox RCD}(K,\infty)$ spaces, gradient curves are also known to be self-contracting (\cite[Proposition 30]{LOZ}).

Rectifiability of self-contracting curves is known to hold in Euclidean space \cite{DLS}, \cite{DDDL}, \cite{MP}, 
in finite dimensional normed spaces \cite{ST}, \cite{Lem}, on Riemannian manifolds \cite{DDDR} and in a certain class of $
\mbox{CAT}(0)$ spaces \cite{Oh}. Zolotov \cite{Z} establishes the same conclusion for self-contracting curves in $\SRA(\alpha)$ free 
metric spaces for a suitable range of $\alpha$ (see also \cite{LOZ}). Rectifiability of rough $\lambda$-self-contracting curves in $
\R^n$ for $\lambda<\tfrac1n$ was established in \cite{DaDuDe}, but the topic has not been considered in any other settings to date. 
The main result of the second part of this paper generalizes the theorem of Zolotov \cite{Z} and the results in \cite{DaDuDe} to the setting of roughly self-contracting curves.

\begin{theorem}\label{thm:rectifiability}
Let $\tfrac12<\alpha<1$ and let $(X,d)$ be an $\SRA(\alpha)$ free metric space. Then there exists $\lambda_0 = \lambda_0(\alpha,X)>0$ so that any bounded, rough $\lambda$-self-contracting curve in $(X,d)$, with $\lambda \in [-1,\lambda_0)$ is rectifiable.
\end{theorem}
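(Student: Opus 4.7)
The plan is to adapt Zolotov's proof of rectifiability for self-contracted ($\lambda = 0$) curves in $\SRA$ free spaces. Let $\gamma : I \to X$ be a bounded, rough $\lambda$-self-contracting curve, and write $D = \diam(\gamma(I))$. A multi-scale reduction shows that it suffices to bound, at each scale $r > 0$, the cardinality $N$ of a finite sequence of times $t_1 < t_2 < \cdots < t_N$ in $I$ chosen greedily so that $d(\gamma(t_i),\gamma(t_{i+1})) \ge r$ for every $i$; summing chord lengths over such a sample and then over a geometric sequence of scales $r_k = 2^{-k}D$ yields a telescoping bound on the total variation of $\gamma$, hence rectifiability.

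The heart of the argument is to show that the greedy sample $Y_r := \{y_i = \gamma(t_i)\}_{i=1}^N$ forms an $\SRA(\alpha')$ subset of $X$ for some $\alpha' \le \alpha$, at which point the $\SRA(\alpha)$ freeness of $X$ forces $N \le N_0(X,\alpha)$. For a triple $i < j < k$, the rough $\lambda$-self-contracting hypothesis gives
\[
d(y_j, y_k) \le d(y_i, y_k) + \lambda\, d(y_i, y_j),
\]
and I would combine this with the lower bound $d(y_l, y_{l+1}) \ge r$ from the greedy construction to derive an $\SRA$-type estimate at the middle vertex $y_j$. More precisely, I aim to upgrade Zolotov's self-contracted analysis---which produces a triangle defect
\[
d(y_i, y_j) + d(y_j, y_k) - d(y_i, y_k) \ge (1-\alpha)\min\{d(y_i, y_j), d(y_j, y_k)\}
\]
and hence an $\SRA(\alpha)$ inequality at $y_j$---to a version that tolerates an additive error of order $\lambda\, d(y_i, y_j)$. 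The threshold $\lambda_0(\alpha,X)$ is then chosen so that this perturbation does not overwhelm the defect; the hypothesis $\alpha > \tfrac12$ enters here as in Zolotov's original argument, guaranteeing a strictly positive defect that survives the perturbation and yields an $\SRA(\alpha')$ condition with $\alpha' \le \alpha$.

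The principal obstacle is precisely this quantitative perturbation analysis. In the strict self-contracting setting, monotonicity of $t \mapsto d(\gamma(t), y_k)$ yields clean cosine-law-style defect inequalities; in the rough case, the curve may drift backward by $\lambda$ times the preceding chord, and these drifts must be absorbed without destroying the $\SRA$ bound. A secondary subtlety is that the estimate must be uniform across all triple configurations $i < j < k$, so one likely has to treat separately the regimes in which $d(y_i,y_j)$ and $d(y_j,y_k)$ are and are not comparable, and propagate the rough self-contracting inequality to intermediate scales by iteration. Once the $\SRA(\alpha')$ estimate on $Y_r$ is in place, the rectifiability conclusion follows from the cardinality bound $N \le N_0(X,\alpha)$ combined with the multi-scale telescoping described in the first paragraph.
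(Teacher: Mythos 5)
There is a genuine gap, and it lies at the very center of your plan: the claim that the greedy $r$-separated sample $Y_r$ of a rough $\lambda$-self-contracting curve forms an $\SRA(\alpha')$ subset with $\alpha'<1$ is false, and so is the implicit claim that its cardinality can be bounded independently of $r$. A geodesic segment is rough $(-1)$-self-contracting, hence rough $\lambda$-self-contracting for every $\lambda\ge -1$, and it lives in the $\SRA(\alpha)$ free space $\R^n$; its greedy sample at scale $r$ consists of roughly equally spaced collinear points. By Remark \ref{rem:SRA-remark}, no three collinear points satisfy the $\SRA(\alpha)$ condition for any $\alpha<1$ (the "triangle defect" you invoke is exactly zero there while the right-hand side $(1-\alpha)\min\{d(y_i,y_j),d(y_j,y_k)\}$ is positive), and the sample has about $D/r$ points, which is unbounded as $r\to 0$ even though the curve is rectifiable. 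So the scale-by-scale cardinality bound you want to telescope over simply cannot hold, and no choice of $\lambda_0$ or perturbation analysis will repair it: the obstruction already occurs at $\lambda=-1$.

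The actual logic of Zolotov's argument (and of the proof in this paper) is structurally different: one does not show that samples of the curve are $\SRA$ sets, but rather that an $\SRA(\alpha)$ subset of prescribed cardinality $K$ can be \emph{extracted} from any finite ordered subset $S$ whose length-to-diameter ratio is large, $L(S)>C\,D(S)$ (Theorem \ref{thm:discrete-rectifiability}); rectifiability then follows by contraposition from $\SRA(\alpha)$ freeness. The extraction itself is the hard part and requires two ingredients your sketch does not supply: a weighted-sum induction over a moving family of $m$-point index sets, which produces a subset satisfying only a one-sided "medial ordered $\SRA(\theta)$" inequality (Proposition \ref{prop:1}), and an application of Ramsey's theorem for $3$-uniform hypergraphs combined with a bounded-turning estimate (Proposition \ref{prop:2}) to upgrade this one-sided condition to the full, permutation-invariant $\SRA(\alpha)$ condition. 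The rough parameter $\lambda$ enters as a perturbation in the weighted-sum induction, which is where $\lambda_0(\alpha,X)$ is determined --- not in a defect inequality for triples drawn from a net.
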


We end this introduction with an outline of the paper. In section \ref{sec:sra-section} we give basic facts about metric spaces satisfying the $\SRA(\alpha)$ condition. In particular, in subsection \ref{subsec:sra-and-snowflaking} we discuss the connection between the $\SRA(\alpha)$ condition and snowflaking, and we prove Theorem \ref{main-prop}. The Euclidean bi-Lipschitz embeddability of metric spaces with small rough angles follows from Theorem \ref{main-prop} and the Assouad Embedding Theorem. We comment briefly on this topic in subsection \ref{subsec:sra-embeddability}. Section \ref{section:subsets} concerns subsets of metric spaces, especially snowflake metric spaces, satisfying the $\SRA$ condition. Here we give the proof of Theorem \ref{thm:main-cor-2}. We also present a number of examples of $\SRA$ free and $\SRA$ full metric spaces. 

The second part of the paper consists of section \ref{sec:self-contracting}, where we study the rectifiability of roughly self-contracting curves in $\SRA(\alpha)$ free metric spaces. A substantial portion of this section is devoted to the proof of Theorem \ref{thm:rectifiability}, which closely follows the innovative argument put forward by Zolotov \cite{Z} in the self-contracting category. 

We conclude the paper with a list of open questions (section \ref{sec:questions}) and two appendices. In Appendix A (section \ref{appendix-a}) , we provide an example, due to Hyt\"onen, of a doubling metric space which is neither $\SRA(\alpha)$-free nor $\SRA(\alpha)$-full. In Appendix B (section \ref{appendix-b}), we present a partial result in support of one of the open questions mentioned in section \ref{sec:questions}.

\medskip

\noindent {\bf Acknowledgements:} We would like to thank Zolt\'an Balogh and Efstathios Chrontsios Garitsis for helpful discussions on the subject of this paper. We are also grateful to Tuomas Hyt\"onen for feedback, and for permission to include his example of a doubling metric space which is neither $\SRA$ free nor $\SRA$ full.

\section{Metric spaces with small rough angles}\label{sec:sra-section}

\subsection{Definitions}

Le Donne, Rajala, and Walsberg proved in \cite{LRW} that a snowflake of a metric space isometrically embeds into a finite-dimensional normed space if and only if the space is finite. The main idea behind the proof was to observe that, in a space with a snowflake metric, the angles formed by any three points must be {\em roughly small}. On the other hand, by a result of Erd\"os and F\"uredi \cite{EF}, in any Euclidean subset with sufficiently many points, some triple of points must form a {\em large} angle. Based on \cite{LRW}, Zolotov in \cite{Z} defined a class of metric spaces that satisfy a stronger form of the triangle inequality, while also capturing the idea that all angles within the space are {\em roughly} small.

\begin{definition}[Small rough angles condition]\label{def:SRA}
Let $0 \le \alpha \le 1$. A metric space $(X,d)$ satisfies the {\em $\SRA(\alpha)$ condition} ({\em small rough angles with parameter $\alpha$)} if
\begin{equation}\label{SRA}
d(x,y) \le \max \{ d(x,z) + \alpha d(z,y) , \alpha d(x,z) + d(z,y) \} \qquad \mbox{ for all $x,y,z \in X$.}
\end{equation}
\end{definition}

Condition \eqref{SRA} weakens as $\alpha$ increases, so every space satisfying the $\SRA(\beta)$ condition also satisfies the $\SRA(\alpha)$ condition for each $\alpha>\beta$. Every metric space satisfies the $\SRA(1)$ condition, while the $\SRA(0)$ condition characterizes the class of {\em ultrametric spaces}.

\begin{remark}\label{rem:SRA-alpha-negative}
In principle, one might also consider \eqref{SRA} for values $\alpha \in [-1,0)$.\footnote{In section \ref{sec:self-contracting} we will consider an analogous notion  ({\em rough $\lambda$-self-contractivity}) for ordered sets of points and for curves, in which a condition similar to \eqref{SRA} is imposed but only for {\bf ordered triples} of points. The notion of rough $\lambda$-self-contracting ordered set or curve makes sense and is nontrivial for all $-1\le\lambda\le 1$, with the rough $(-1)$-self-contracting condition characterizing collinear sets or geodesic curves. By way of contrast, the condition that a metric space satisfy the $\SRA(\alpha)$ condition imposes a restriction on all possible permutations of any triple of points.} However, it is easy to see that no space with more than two elements can satisfy the $\SRA(\alpha)$ condition for any $\alpha<0$. Assume that $(X,d)$ satisfies the $\SRA(\alpha)$ condition with $\alpha<0$, and suppose that $X$ contains at least three distinct elements $x,y,z$. Without loss of generality, assume that $d(x,y) \ge \max\{d(x,z),d(y,z)\}$. Interchanging the roles of $y$ and $z$ if necessary, we may assume from \eqref{SRA} that $d(x,y) \le d(x,z)+\alpha d(y,z)$. Then $d(x,z) \le d(x,z)+\alpha d(y,z)$ and hence $d(y,z)=0$. This contradicts the assumption that $x$, $y$, and $z$ are distinct.
\end{remark}

\begin{remark}\label{rem:SRA-remark}
In general, it can be difficult to find sets satisfying an $\SRA(\alpha)$ condition. For example, no set of three points on the real line satisfies the $\SRA(\alpha)$ condition for any $\alpha<1$. To see this, assume $x,z\in\R$ and assume without loss of generality that $d(x,z)=1$. Let us determine where a third point $y$ satisfying an $\SRA(\alpha)$ condition should be located. Without loss of generality, we can assume that $x< y< z$. If $d(x,y)=\delta$ for some $\delta<1$ then $d(y,z)=1-\delta$. In order for \eqref{SRA} to be satisfied, we should have
$$
1\leq\max\{\delta+\alpha(1-\delta),(1-\delta)+\alpha\delta\}.
$$
Hence, either $1\leq \delta+\alpha(1-\delta)$ (implying $\delta\geq 1$) or $1\leq(1-\delta)+\alpha\delta$ (implying $\delta\leq0$), and both cases lead to a contradiction.

Although we presented this example in the real line, the proof shows that the same conclusion holds true in any metric space that contains three collinear points.
\end{remark}

\begin{remark}\label{rem:SRA-angles}
Let us motivate why Definition \ref{def:SRA} inherently reflects the idea that all angles are {\em roughly} small, depending on the parameter $\alpha$. Any set of three points $x,y,z$ in a metric space $(X,d)$ can be isometrically embedded into $\R^2$. Let us consider a {\em comparison triangle} $\triangle(\bar{x},\bar{y},\bar{z}) \subset \R^2$, with angles $\widehat{xyz}, \widehat{yzx}$ and $\widehat{zxy}$. Assume that $d(x,z)\leq d(z,y)$. Then the validity of \eqref{SRA} for some given $\alpha\le1$ implies that $d(x,y) \le \alpha d(x,z) + d(z,y)$, whence
\[
d^2(x,y) \le \alpha^2 d^2(x,z) + d^2(z,y)+2\alpha d(x,z) d(z,y).
\]
On the other hand, the law of cosines and the previous inequality imply that
\[
\cos \widehat{yzx}=\frac{d^2(z,y)+d^2(x,z)-d(x,y)^2}{2d(z,y)d(x,z)}\geq -\alpha,
\]
and so the angle $\widehat{yzx}$ is less or equal than $\pi-\arccos(\alpha)$. The same argument applies to the other two angles. Hence any angle formed by any three points in $(X,d)$ is less than or equal to $\pi-\arccos(\alpha)$. In particular, if $(X,d)$ satisfies the $\SRA(0)$ condition then every angle is at most $\tfrac\pi2$. 
\end{remark}

\begin{remark}\label{rem:SRA-angles2}
We emphasize that the fact that all angles are less than or equal to $\pi-\arccos(\alpha)$ does not imply that the space satisfies the $\SRA(\alpha)$ condition. For an example, let $X:=\{x,y,z\}$ be a $3$-point subset of $\R^2$ where $x=(1/2,0),y=(-1/2,0)$ and $z=(0,1/2)$. In this case, $d(x,y)=1$, $d(x,z)=d(y,z)=\sqrt{2}/2$, and all angles are less than or equal to $\pi/2$. However, $X$ is not an ultrametric space. Recall that, in any ultrametric space, every triangle is an acute isosceles triangle, that is, a triangle with two equal sides with the third side of length less than or equal to that of the other two.
\end{remark}

\begin{remark}
Furthermore, a metric space satisfying an $\SRA(\alpha)$ condition for some $0<\alpha<1$ need not have the property that all triangles are acute. Choose $\delta>0$ such that $\sqrt{2(1+\sin\delta)}>\sqrt{2}$. Let $\alpha=\sqrt{2(1+\sin\delta)}-1$ and let $X=\{x,y,z\}$ be a $3$-point metric space such that $d(x,y)=1+\alpha$ and $d(x,z)=d(y,z)=1$. This space trivially satisfies the $\SRA(\alpha)$ condition. However, by the law of cosines, the euclidean angle $\widehat{xzy}$ in the comparison triangle satisfies 
\[
\cos(\widehat{xzy})=\frac{2-(1+\alpha)^2}{2}<0.
\]
\end{remark}

We recall the result of Erd\"os--F\"uredi \cite[Theorem 4.3]{EF} alluded to above: in $\R^n$, sufficiently large subsets necessarily form at least one {\em large angle}. In \cite[Theorem 1.1]{LRW}, this result is used in the context of finite-dimensional normed spaces with the help of the John Ellipsoid Theorem.

\begin{proposition}[Erd\"os--F\"uredi]\label{ErdosFuredi}
For any $n \in \mathbb{N}$ and $0<\beta<\pi$ there exists $K \in \mathbb{N}$ so that if $S \subseteq \mathbb{R}^n$ has cardinality at least $K$, then there are distinct $x, y, z \in S$ such that $\beta \leq \widehat{xyz} \leq \pi$.
\end{proposition}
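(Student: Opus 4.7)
The plan is to prove Proposition~\ref{ErdosFuredi} by induction on the ambient dimension $n$. The base case $n=1$ is immediate, as three distinct real numbers $a<b<c$ always satisfy $\widehat{abc}=\pi\ge\beta$, so $K(1,\beta)=3$ works for every $\beta<\pi$. More importantly, this base case identifies the heuristic target for the inductive step: one wants to extract from a large set in $\R^n$ a configuration that is ``nearly one-dimensional,'' so that three suitably nested points force the middle vertex to see a near-straight angle.

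For the inductive step, fix $n\ge 2$ and $\beta<\pi$, assume the result in $\R^{n-1}$ with finite constants $K_{n-1}(\beta')$ for every $\beta'<\pi$, and choose parameters $\eps=\eps(\pi-\beta)>0$ small and $T=T(\eps)$ large. Given a very large $S\subseteq\R^n$, I would first apply pigeonhole on the directions from a basepoint $x_0\in S$: cover $\Sph^{n-1}$ by $N=N(n,\eps)$ spherical caps of angular radius $\eps$ and extract $S_1\subseteq S\setminus\{x_0\}$ of size $\ge(|S|-1)/N$ whose elements all lie in a common cone $\mathcal{C}$ with apex $x_0$ and half-angle $\eps$. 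Parametrize each $y\in S_1$ as $y=x_0+t_ye+w_y$, where $e$ is the cone axis, $t_y>0$, and $|w_y|\le t_y\tan\eps$. The analysis then splits according to the axial coordinates $\{t_y:y\in S_1\}$. \emph{Case 1:} If some triple $y_1,y_2,y_3\in S_1$ is $T$-nested, meaning $t_{y_1}\le T^{-1}t_{y_2}$ and $t_{y_2}\le T^{-1}t_{y_3}$, a direct computation from the parametrization shows $y_1-y_2\approx-t_{y_2}e$ and $y_3-y_2\approx t_{y_3}e$, yielding $\widehat{y_1y_2y_3}>\pi-O(\eps)\ge\beta$. \emph{Case 2:} Otherwise, a short combinatorial argument confines the $t_y$'s to at most two $T$-separated clusters, so by pigeonhole a subset $S_2\subseteq S_1$ of size $\ge\tfrac12|S_1|$ sits inside a thin frustum $\mathcal{C}\cap\{A\le|y-x_0|\le TA\}$; projecting $S_2$ to $\R^{n-1}$ (for example perpendicular to $e$) and invoking the inductive hypothesis at an auxiliary parameter $\beta'<\pi$ slightly above $\beta$ yields a triple whose large angle in the projection pulls back to a large angle in $\R^n$.

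The main obstacle is in the projection step of Case~2. Orthogonal projection perpendicular to $e$ preserves angles only up to an error governed by the ratio of the axial spread $(T-1)A$ to the transverse spread $TA\tan\eps$; since Case~1 requires $T$ to be large relative to $\eps$, this ratio is not automatically small. One resolution is to iterate the cone-concentration step inside the frustum, successively refining the effective cone half-angle until orthogonal projection is near-isometric, at the cost of further pigeonhole losses. Tracking these losses and ensuring the induction closes with the auxiliary parameter $\beta'$ remaining strictly below $\pi$ constitutes the delicate combinatorial core of the Erd\H{o}s--F\"uredi theorem.
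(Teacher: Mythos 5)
The paper does not actually prove Proposition \ref{ErdosFuredi}: it is imported from Erd\H{o}s--F\"uredi \cite{EF}, and the surrounding text only records that the original proof is combinatorial and that K\"aenm\"aki--Suomala \cite{KS} later gave a geometric one. Your proposal therefore has to stand on its own, and as written it does not: the hole you yourself flag in Case~2 is not a bookkeeping issue but the point where the argument breaks. After your two pigeonhole steps the surviving set lies in a frustum of axial extent $\approx A$ and transverse radius $\approx A\tan\eps$; since $\eps$ is small this set is long and thin \emph{along} $e$, so orthogonal projection perpendicular to $e$ collapses it into a ball of radius $A\tan\eps$ and discards the dominant (axial) coordinate. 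Angles are not even approximately preserved: in $\R^2$ with $e$ vertical, the points $x=(-r,0)$, $y=(0,h)$, $z=(r,0)$ with $r\approx A\tan\eps$ and $h\approx A$ project to three collinear points, giving $\widehat{xyz}=\pi$ in the projection, while the true angle at $y$ is $2\arctan(r/h)=O(\eps)$. The proposed repair (iterating the cone-concentration inside the frustum) is precisely the step that would have to be invented, and it is not clear it closes: each re-centering changes the axis and nothing forces the effective half-angle to shrink. (Incidentally, your worry that Case~1 needs $T$ large relative to $\eps$ is unfounded --- $T=2$ already yields $\widehat{y_1y_2y_3}\ge\pi-O(\eps)$ --- but this does not rescue Case~2.)

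The deeper problem is that working only with directions seen from a single basepoint $x_0$ throws away the information the known proofs actually use, namely the directions between \emph{all} pairs. The Erd\H{o}s--F\"uredi argument runs as follows: cover $\mathbb{S}^{n-1}$ by $N$ antipodal pairs of caps of angular radius $\tfrac{\pi-\beta}{2}$, colour each pair $\{x,y\}\subset S$ by the index of a cap pair containing $\pm\tfrac{y-x}{|y-x|}$, and orient the edge toward the ``positive'' cap. If no point had both an incoming and an outgoing edge of the same colour, then the map sending $v$ to the vector $\bigl(f_1(v),\ldots,f_N(v)\bigr)\in\{0,1\}^N$, where $f_i(v)=1$ iff $v$ has an incoming edge of colour $i$, would take distinct values at the two endpoints of every edge, forcing $\#S\le 2^N$. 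Hence for $\#S>2^N$ there exist $x\to y\to z$ in a single colour, and then the directions $y\to x$ and $y\to z$ lie in opposite caps of one antipodal pair, so $\widehat{xyz}\ge\pi-2\cdot\tfrac{\pi-\beta}{2}=\beta$. This is exactly what produces the bound \eqref{EFquantitative} quoted after the proposition. If you want a genuinely geometric route closer in spirit to your cones, the reference is \cite{KS}, but the mechanism there is a porosity-type density argument rather than induction on dimension via projection.
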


According to \cite[Theorem 4.3]{EF}, the optimal choice of $K$ satisfies
\begin{equation}\label{EFquantitative}
2^{\left( \tfrac{\pi}{\pi-\beta} \right)^{n-1}} \le K \le 2^{\left( \tfrac{4\pi}{\pi-\beta} \right)^{n-1}}.
\end{equation}
The proof in \cite{EF} is combinatorial; a purely geometric proof for Proposition \ref{ErdosFuredi} (which yields a non-sharp value for $K$) was given by K\"aenm\"aki and Suomala in \cite{KS}.

\begin{corollary}\label{EFcor}
For each $n \in \N$ and $\alpha<1$, there exists $K \in \N$ so that if $S\subset \R^n$ satisfies the $\SRA(\alpha)$ condition, then $S$ contains at most $K$ elements.
\end{corollary}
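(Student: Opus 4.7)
The plan is to deduce the corollary as an immediate consequence of Remark \ref{rem:SRA-angles} combined with the Erd\"os--F\"uredi result in Proposition \ref{ErdosFuredi}. The key observation is that in $\R^n$ the ``comparison triangle'' of any three points $x,y,z \in S$ from Remark \ref{rem:SRA-angles} is literally the Euclidean triangle they span, so the angle bound derived there is a bound on the actual Euclidean angles $\widehat{xyz}$. Thus the $\SRA(\alpha)$ condition forces every such angle to satisfy $\widehat{xyz} \le \pi - \arccos(\alpha)$, and because $\alpha < 1$ we have $\arccos(\alpha) > 0$, so this upper bound is strictly less than $\pi$.

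First I would fix $\alpha < 1$ (and without loss of generality $\alpha \geq 0$ by Remark \ref{rem:SRA-alpha-negative}) and choose any $\beta$ strictly between $\pi - \arccos(\alpha)$ and $\pi$, for instance
\[
\beta := \pi - \tfrac12 \arccos(\alpha).
\]
Then I would apply Proposition \ref{ErdosFuredi} with this $\beta$ and the given $n$ to obtain an integer $K = K(n,\alpha) \in \N$ such that any subset of $\R^n$ of cardinality at least $K$ contains distinct points $x,y,z$ with $\beta \le \widehat{xyz} \le \pi$.

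Finally I would argue by contradiction. Suppose $S \subset \R^n$ satisfies the $\SRA(\alpha)$ condition and $|S| \ge K$. Proposition \ref{ErdosFuredi} then produces a triple in $S$ whose angle is at least $\beta = \pi - \tfrac12 \arccos(\alpha)$, strictly larger than the universal upper bound $\pi - \arccos(\alpha)$ guaranteed by Remark \ref{rem:SRA-angles}. This is a contradiction, so $|S| < K$, yielding the desired cardinality bound. There is no real obstacle here; the corollary is a clean juxtaposition of two already-established results, and the only subtlety is that the inequality $\alpha < 1$ must be strict so that a nontrivial gap exists between the $\SRA(\alpha)$ angle bound and $\pi$. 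Substituting $\pi - \beta = \tfrac12 \arccos(\alpha)$ into the quantitative estimate \eqref{EFquantitative} even furnishes an explicit bound of the form $K \le 2^{(8\pi/\arccos(\alpha))^{n-1}}$.
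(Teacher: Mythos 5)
Your proposal is correct and is precisely the argument the paper intends: the corollary is stated without proof as an immediate juxtaposition of Remark \ref{rem:SRA-angles} (all angles in an $\SRA(\alpha)$ set are at most $\pi-\arccos\alpha<\pi$, and in $\R^n$ the comparison triangle is the actual triangle) with Proposition \ref{ErdosFuredi}, exactly as the subsequent Remark \ref{EFremark} confirms. Your care in choosing $\beta$ strictly between $\pi-\arccos\alpha$ and $\pi$ to force a genuine contradiction is the right touch.
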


\begin{remark}\label{EFremark}
The cardinality of any set of points in $\R^n$, $n \ge 2$, satisfying the $\SRA(\alpha)$ ($0 \le \alpha \le 1$) condition provides a lower bound for the maximal number of points in $\R^n$ such that all angles determined by any triple of points are less or equal to a certain angle $\beta=\pi-\arccos(\alpha)$ ($\pi/2\leq \beta \le \pi$). For example, a regular simplex in $\R^n$ (comprising $n+1$ points) trivially satisfies the $\SRA(\alpha)$ condition for any $\alpha<1$. Moreover, the vertices of a regular simplex together with the center of its circumscribed spFhere form a set of $n+2$ points that satisfies the $\SRA(\sqrt{2(n+1)/n}-1)$ condition. 

In view of \eqref{EFquantitative} and Remark \ref{rem:SRA-angles}, the maximal cardinality of a set of points in $\R^n$ satisfying the $\SRA(\alpha)$ condition is at least
$$
A(n)^{(\arccos\alpha)^{1-n}}
$$
for some constant $A(n)>1$.
\end{remark}

We will return to this circle of ideas in Section \ref{section:subsets}, where we introduce the notions of {\em $\SRA(\alpha)$ free} and {\em $\SRA(\alpha)$ full} metric spaces. Phrased in that language, Corollary \ref{EFcor} says that $\R^n$ is an $\SRA(\alpha)$ free metric space for each $n \in \N$ and $\alpha<1$. Before we take up that discussion, we consider how the snowflaking transformation of a metric affects the validity of the $\SRA(\alpha)$ condition.

\subsection{Small rough angles and the snowflaking transformation on metric spaces}\label{subsec:sra-and-snowflaking}

In \cite[Theorem 1.1]{LRW}, the authors establish that for a given metric space $(X,d)$ and $0<\alpha<1$, the snowflaked metric space $(X,d^\alpha)$ satisfies the $\SRA(\beta)$ condition with $\beta = \alpha$. The following result improves this observation and gives the sharp value of $\beta$.

\begin{lemma}\label{lem:bestSRA}
Let $(X,d)$ be a metric space and $0<\alpha<1$. Then $(X,d^\alpha)$ satisfies the $\SRA(2^\alpha-1)$ condition.
\end{lemma}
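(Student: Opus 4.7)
The plan is to reduce the claim to an elementary one-variable inequality. Fix $x,y,z \in X$ and write $a=d(x,z)$, $b=d(z,y)$, $c=d(x,y)$. The triangle inequality for $d$ gives $c \le a+b$, and monotonicity of $t \mapsto t^\alpha$ yields $c^\alpha \le (a+b)^\alpha$. Since the conclusion \eqref{SRA} (with $d$ replaced by $d^\alpha$ and $\alpha$ replaced by $2^\alpha-1$) is symmetric in the roles of $a$ and $b$, I may assume $a \le b$ without loss of generality. It then suffices to prove
\[
(a+b)^\alpha \le b^\alpha + (2^\alpha-1)\, a^\alpha.
\]
Dividing by $b^\alpha$ and setting $t = a/b \in [0,1]$, this reduces to the single-variable inequality
\begin{equation}\label{plan:reduced}
(1+t)^\alpha \le 1 + (2^\alpha-1)\, t^\alpha \qquad \text{for all } t \in [0,1].
\end{equation}

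I would prove \eqref{plan:reduced} by examining the function $\phi(t) := 1 + (2^\alpha-1) t^\alpha - (1+t)^\alpha$ on $[0,1]$. Direct substitution gives the boundary values $\phi(0) = 0$ and $\phi(1) = 2^\alpha - 2^\alpha = 0$, so the task is to show that $\phi$ does not dip below zero in between. Differentiating,
\[
\phi'(t) = \alpha\bigl[(2^\alpha-1)\, t^{\alpha-1} - (1+t)^{\alpha-1}\bigr].
\]
Since $\alpha-1<0$, one has $\phi'(t) \to +\infty$ as $t \to 0^+$, while at the right endpoint $\phi'(1) = \alpha(2^{\alpha-1}-1) < 0$. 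The critical-point equation $\phi'(t)=0$ is equivalent to
\[
\left(\frac{t}{1+t}\right)^{1-\alpha} = 2^\alpha - 1,
\]
and the left-hand side is a strictly increasing function of $t$ taking values from $0$ to $2^{\alpha-1}$; since $2^\alpha-1 \in (0, 2^{\alpha-1})$ for $\alpha \in (0,1)$, there is a unique critical point $t^* \in (0,1)$. Consequently $\phi' > 0$ on $(0,t^*)$ and $\phi' < 0$ on $(t^*,1)$, so $\phi$ increases from $0$ to a positive maximum and then decreases back to $0$. In particular $\phi \ge 0$ on $[0,1]$, which establishes \eqref{plan:reduced} and hence the lemma.

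The main obstacle is not really an obstacle but rather a subtle verification: checking that the critical-point equation has exactly one solution in $(0,1)$ and that the monotonicity of $\phi'$ distributes signs in the way claimed. If one preferred to avoid this small monotonicity argument, an alternative would be to rewrite the inequality as $g(s) := (s+b)^\alpha - (2^\alpha-1) s^\alpha \le b^\alpha$ on $s \in [0,b]$ and observe directly that $g(0) = g(b) = b^\alpha$ with $g'(0^+) = -\infty$ and $g'(b) > 0$, yielding the same conclusion. Either way, the underlying calculus is routine once the homogeneous reduction to \eqref{plan:reduced} has been made.
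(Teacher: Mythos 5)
Your proof is correct and follows essentially the same route as the paper's: reduce via the triangle inequality and homogeneity to the one-variable inequality $1+(2^\alpha-1)t^\alpha-(1+t)^\alpha\ge 0$ on $[0,1]$, and verify it by checking the endpoint values and the sign pattern of the derivative. In fact your critical-point analysis spells out the ``unique interior maximum'' claim that the paper merely asserts, so no changes are needed.
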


\begin{proof}
Let $0<\alpha<1$ and $x,y,z\in X$. Let us denote $A=d(x,y)$, $B=d(y,z)$ and $C=d(x,z)$. Without loss of generality we can assume $A\leq B\leq C$. Since $C\leq A+B$ it is enough to find the smallest possible $0<\beta<\alpha$ such that 
\[
(A+B)^{\alpha}\leq\max\{A^{\alpha}+\beta B^{\alpha},\beta A^{\alpha}+ B^{\alpha}\}.
\]
Because $A\leq B$, the maximum is attained in the second term. Dividing the expression by $B^{\alpha}$ and setting $t=A/B$, the problem reduces to finding $\beta<\alpha$ such that the function
\[
f(t):=1+\beta t^{\alpha}-(1+t)^{\alpha}
\]
satisfies $f(t)\geq 0$ for $0< t<1$. If we impose $f(1)=0$ we obtain $\beta=2^\alpha-1$. Because $f(0)=0$, $f$ is continuous, and has a single maximum in the interval $(0,1)$ it is clear that for $\beta=2^\alpha-1$, $f(t)\geq 0$ for $0< t<1$ and the result follows.
\end{proof}

In particular, since $2^\alpha-1<\alpha$ for each $0<\alpha<1$, we recover the conclusion of \cite{LRW}. The subsequent example demonstrates that Lemma \ref{lem:bestSRA} is sharp.

\begin{example}\label{ex:arithmetic}
Let $X=\{x,y,z\}$ be an arithmetic sequence in $\R$, with $C=|x-z| $ and $A=|x-y|=|y-z|=C/2$. Then $C = A+A$ and for any $0<\alpha<1$, $C^\alpha =  (2^\alpha-1) A^\alpha+A^\alpha$.
\end{example}

A natural question is whether the converse of Lemma \ref{lem:bestSRA} is true; specifically, if the $\SRA(\beta)$ condition implies that the metric coincides with a fractional power of another metric. The following example shows that this is not the case.

\begin{example}\label{ex:no-converse}
Let $Z = B \times \N$ where $B = \{z \in \C:|z| \le 1\}$, and equip $Z$ with the following metric:
$$
d((z,m),(w,p)) = \begin{cases} |z-w| & \mbox{if $m=p$,} \\
2 & \mbox{if $m \ne p$.} \end{cases}
$$
Fix $0<\beta<1$ and a sequence $(\delta_m)$ with $\delta_m \le 1$ and $\delta_m \searrow 0$. Let $x_m,y_m,z_m$, $m \in \N$, be points in $B$ with the property that for fixed $m$, the points $x_m$, $y_m$, $z_m$ form the vertices of a triangle with sides of Euclidean length $1$, $\delta_m$, and $1+\beta\delta_m$. Let $X \subset Z$ be the subset consisting of all points of the form $(p,m)$, where $p \in \{x_m,y_m,z_m\}$ and $m \in \N$. 

We first verify that $(X,d)$ satisfies the $\SRA(\beta)$ condition. Let $(p,k)$, $(q,\ell)$, and $(r,m)$ be distinct elements of $X$. If $k=\ell=m$ then $\{p,q,r\} = \{x_m,y_m,z_m\}$ and the $\SRA(\beta)$ condition is clearly satisfied for any permutation of the three points. On the other hand, if $k$, $\ell$, and $m$ are not all equal then without loss of generality assume that $k \ne \ell$ and $k \ne m$. In this case the triangle formed by the points $(p,k)$, $(q,\ell)$, and $(r,m)$ is an isoceles triangle with two sides of length $2$ and a third side of length less than or equal to $2$. Such triangle is then an acute isosceles triangle in the sense of Remark \ref{rem:SRA-angles2}, and hence this triple of points satisfies the $\SRA(0)$ condition and consequently also satisfies the $\SRA(\beta)$ condition.

Finally, we show that $(X,d^q)$ fails to be a metric space for any $q>1$. Suppose that $d^q$ is a metric on $X$ for some $q>1$. Then the triangle inequality
\begin{equation}\label{eq:d-q-failure}
(1+\beta\delta_m)^q \le 1 + \delta_m^q \qquad \forall \, m \in \N.
\end{equation}
Using the inequality $1+qx \le (1+x)^q$ valid for $x>0$ and $q>1$, we obtain $1 + q \beta \delta_m \le 1 + \delta_m^q$ and hence
$$
0< q \beta \le \delta_m^{q-1}.
$$
Since $q>1$ and $\delta_m \to 0$ we get a contradiction.
\end{example}

\begin{remark}
The metric space $(Z,d)$ defined in the previous example is separable, and hence embeds isometrically in $\ell^\infty$ by the Fr\'echet embedding theorem. It is natural to ask if any examples illustrating this conclusion can be provided which lie in a nicer Banach space, e.g., $\ell^p$ for some $1<p<\infty$ or even $\ell^2$. In Appendix B (section \ref{appendix-a}), we give an example of a set $X \subset \ell^2$ of this type, but only for a restricted range of values of $\alpha$. The construction is essentially the same as in Example \ref{ex:no-converse}, but rather than abstractly defining the distance between elements of distinct triangles to be equal to $2$, the triangles are arranged in a sequence of orthogonal two-dimensional subspaces of $\ell^2$, lying near the vertices of an infinite-dimensional equilateral simplex.

See section \ref{sec:questions} for further questions and remarks.
\end{remark}

Next, we show that if $X$ is a finite metric space, then the converse of Lemma \ref{lem:bestSRA} is true.

\begin{proposition}
If a finite set satisfies the $\SRA(\alpha)$ condition with respect to a metric $d$, then the metric coincides with a fractional power of another metric. 
\end{proposition}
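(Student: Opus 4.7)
The plan is to show that some $p>1$ makes $\rho := d^p$ itself a metric on $X$; then $d = \rho^{1/p}$ realizes $d$ as a fractional power of a metric, as required. The task therefore reduces to verifying the triangle inequality
\[
d(x,y)^p \le d(x,z)^p + d(z,y)^p
\]
for every triple in $X$ (symmetry and positivity of $d^p$ are automatic).

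First I would extract a uniform \emph{strict} triangle inequality from the hypothesis. For distinct $x,y,z \in X$, label the sides $A = d(x,z)$, $B = d(z,y)$, $C = d(x,y)$ so that $C$ is the longest side and $A \le B$ (relabeling $x,y$ if necessary). The $\SRA(\alpha)$ condition then gives
\[
C \le \max\{A + \alpha B,\, \alpha A + B\} = \alpha A + B,
\]
where the maximum is identified using $A \le B$ and $\alpha<1$. Since $\alpha < 1$ and $A > 0$, this yields the strict inequality $C < A + B$.

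Next I would study, for each triple $T=\{x,y,z\}$ of distinct points, the continuous function $h_T(p) := A^p + B^p - C^p$ on $p \in [1,\infty)$. In the easy case $C \le B$, one has $h_T(p) \ge A^p \ge 0$ for all $p \ge 1$. In the nontrivial case $C > B$, the previous step gives $h_T(1) = A + B - C > 0$, and continuity of $h_T$ at $p=1$ produces $\delta_T > 0$ with $h_T(p) > 0$ on $[1, 1+\delta_T)$. Either way, $h_T \ge 0$ on a neighborhood of $p=1$.

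The only place where finiteness of $X$ is essential, and the step I would flag as the main obstacle, is the passage from pointwise to uniform validity. Because $X$ is finite there are only finitely many triples, so $\delta^* := \min_T \delta_T > 0$, and any choice $p \in (1, 1+\delta^*)$ forces the required inequality for every triple simultaneously; hence $d^p$ is a metric. Example~\ref{ex:no-converse} shows why this last step cannot survive dropping finiteness: there the $\delta_T$ collapse to $0$ along a sequence of triples, so no single $p>1$ works uniformly.
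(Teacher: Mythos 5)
Your proof is correct, and while it shares the overall skeleton of the paper's argument --- verify the triangle inequality for $d^p$ one triple at a time, produce a per-triple admissible exponent, then use finiteness to pass to a single $p>1$ --- the treatment of each individual triple is genuinely different and softer. The paper retains the full strength of the $\SRA(\alpha)$ inequality $a_t\le\max\{b_t+\alpha c_t,\,c_t+\alpha b_t\}$ and studies the explicit functions $f_{x,\alpha}(p)=1+x^p-(x+\alpha)^p$ and $g_{x,\alpha}(p)=1+x^p-(1+\alpha x)^p$, proving monotonicity in $p$ and locating the unique root $p_t$; it then needs the extra observation that an $L^p$ metric is an $L^q$ metric for $q<p$ in order to justify taking $p=\min_t p_t$. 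You instead extract only the \emph{strict} triangle inequality $C<A+B$ for distinct points (which $\SRA(\alpha)$ with $\alpha<1$ does imply, exactly as you argue), and invoke continuity of $p\mapsto A^p+B^p-C^p$ at $p=1$; the intersection of the resulting intervals $[1,1+\delta_T)$ replaces the $L^p\Rightarrow L^q$ step automatically. Your route is shorter and in fact proves the more general statement that any finite metric space in which all triangle inequalities among triples of distinct points are strict is a snowflake; the trade-off is that it yields no quantitative lower bound on the admissible exponent, whereas the paper's explicit root-finding could in principle be made quantitative in $\alpha$ and the side ratios. Both proofs use finiteness in exactly the same place, and your closing remark about Example~\ref{ex:no-converse} correctly identifies why that step cannot be dispensed with.
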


\begin{proof}
Let $(X=\{x_1,x_2,...,x_n\},d)$ be a metric space. We can construct $N=\frac{n!}{3! (n-3)!}$ comparison triangles $\Delta_t$ ($1\leq t\leq N$) with sides of Euclidean lengths $d(x_i,x_j)$, $d(x_j,x_k)$ and $d(x_i,x_k)$ for every $i\neq j\neq k$, where $i,j,k\in\{1,2,\cdots, n\}$. Fix a triangle $\Delta_t$ with vertices $x_i,x_j$ and $x_k$, and denote $c_t=d(x_i,x_j),b_t=d(x_j,x_k)$, and $a_t=d(x_i,x_k)$. Without loss of generality, we can assume that $0<c_t\leq b_t\leq a_t$. 
Since $(X,d)$ satisfies the $\SRA(\alpha)$ condition, it follows that
\[
a_t\leq\max\{b_t+\alpha c_t,c_t+\alpha b_t\}.
\]
Let us first consider the case where $a_t \le b_t+\alpha c_t$. If
\begin{equation}\label{eq:dptriangle}
(b_t+\alpha c_t)^p\leq b_t^p + c_t^p
\end{equation}
for some choice of $p>1$, then the triple of points $a_t,b_t,c_t$ satisfies the triangle inequality for the $d^p$ metric. For $x\ge 1$ and $0< \alpha <1$, define a function $f_{x,\alpha}:[1,\infty) \to \R$ by
\[
f_{x,\alpha}(p)=1+x^p-(x+\alpha)^p.
\]
To ensure inequality \eqref{eq:dptriangle} holds, we must find $p_t = p_t(c_t,b_t,\alpha)>1$ so that
\begin{equation}\label{eq:dptriangle2}
f_{b_t/c_t,\alpha}(p_t) \ge 0.
\end{equation}
If $c_t = b_t$ then we may select any $p = p(\alpha)$ satisfying $1<p\le \tfrac{\log 2}{\log(1+\alpha)}$ and observe that $f_{1,\alpha}(p) = 2-(1+\alpha)^p \ge 0$. If $c_t < b_t$ then we consider $f_{x,\alpha}$ for $x = b_t/c_t > 1$. Observe that $f_{x,\alpha}(1) = 1-\alpha>0$ for all $x$ and $\alpha<1$, and that $f_{x,\alpha}(p) \to - \infty$ as $p \to \infty$ provided $\alpha>0$. Furthermore, $f_{x,\alpha}'(p) = x^p \log x - (x+\alpha)^p \log(x+\alpha) < 0$ since the function $y \mapsto y^p \log y$ is strictly increasing for $y \in (1,\infty)$. Hence $f_{x,\alpha}(p)$ is strictly decreasing for $p \in (1,\infty)$ and there exists a unique $p = p(x,\alpha) > 1$ so that $f_{x,\alpha}(p) = 0$. We set $p_t = p(b_t/c_t,\alpha)$ and conclude that \eqref{eq:dptriangle2} is satisfied.

Next, we consider the case where $a_t \le c_t+\alpha b_t$. In this case, if
\begin{equation}\label{eq:dptriangle3}
(c_t+\alpha b_t)^p\leq b_t^p + c_t^p
\end{equation}
for some $p>1$, then the triple of points $a_t,b_t,c_t$ satisfies the triangle inequality for the $d^p$ metric. Again, for $x \ge 1$ and $0<\alpha<1$ we define a function $g_{x,\alpha}:[1,\infty) \to \R$ by
\[
g_{x,\alpha}(p)=1+x^p-(1+\alpha x)^p.
\]
To ensure inequality \eqref{eq:dptriangle3} holds, we must find $p_t = p_t(c_t,b_t,\alpha)>1$ so that
\begin{equation}\label{eq:dptriangle4}
g_{b_t/c_t,\alpha}(p_t) \ge 0.
\end{equation}
If $c_t = b_t$ then we may select any $p = p(\alpha)$ satisfying $1<p\le \tfrac{\log 2}{\log(1+\alpha)}$ as before. If $c_t<b_t$ then we consider $g_{x,\alpha}$ for $x = b_t/c_t>1$. Observe that $g_{x,\alpha}(1) = (1-\alpha)x > 0$ for all $x$ and $\alpha<1$. If $x>1 + \alpha x$ then $g_{x,\alpha}(p) \to + \infty$ as $p \to \infty$ and $g_{x,\alpha}(p)$ is strictly increasing for $p\in(1,\infty)$; in this case any choice of $p>1$ is allowable. If $x<1+\alpha x$ then $g_{x,\alpha}(p) \to - \infty$ as $p \to \infty$ and $g_{x,\alpha}(p)$ is strictly decreasing for $p\in(1,\infty)$. As in the previous case there exists a unique $p = p(x,\alpha)>1$ so that $g_{x,\alpha}(p) = 0$. We set $p_t = p(b_t/c_t,\alpha)$ and conclude that \eqref{eq:dptriangle4} is satisfied. Finally, if $x = 1+\alpha x$ then $g_{x,\alpha}(p) = 1$ for all $p$ and any choice of $p>1$ is allowable. In all cases, we have found a suitable choice for $p_t>1$.

To conclude, observe that $d^p$ is a metric on $X$ where $p>1$ is defined by $p:=\min\{p_t: 1\leq t\leq N\}$. To see this, it suffices to observe that an $L^p$ metric is also an $L^q$ metric for $q<p$. Indeed, if
\[
d(x,y)^p\leq d(x,z)^p+d(y,z)^p,
\]
then, because $q/p<1$,
\[
d(x,y)^q=(d(x,y)^p)^{q/p}\leq (d(x,z)^p+d(y,z)^p)^{q/p}\leq d(x,z)^q+d(y,z)^q,
\]
as wanted.
\end{proof}

In the rest of this section, we prove Theorem \ref{main-prop}, which asserts the equivalence of the $\SRA$ and snowflaking conditions for arbitrary metric spaces up to bi-Lipschitz distortion of the metric. Let us first clarify what we mean by a snowflake metric, following the definition in \cite{TW}.

\begin{definition}[Snowflake metric space]
Let $1\le p < \infty$. A metric $d$ on a space $X$ is said to be an {\it $L^p$-metric} if
$$
d(x,y)^p \le d(x,z)^p + d(z,y)^p \quad \mbox{ for all $x, y,z \in X$.}
$$
A metric $d$ is said to be an {\it $L^\infty$-metric} (alternatively, an {\it ultrametric}), if
$$
d(x,y) \le \max\{ d(x,z) , d(z,y) \} \qquad \mbox{ for all $x,y,z \in X$.}
$$
For $p \in [1,\infty]$, a metric space $(X,d)$ is said to be a {\it $p$-snowflake} if there exists an $L^p$ metric $d'$ on $X$ so that $d$ and $d'$ are bi-Lipschitz equivalent. If $(X,d)$ is a $p$-snowflake for some $1<p\le\infty$, we say that $(X,d)$ is a {\it snowflake}.
\end{definition}

\begin{remark}
For $p<\infty$, a metric $d$ on $X$ is an $L^p$ metric if and only if there exists a metric $d'$ on $X$ so that $d = (d')^\eps$, where $\eps = 1/p$. 
\end{remark}

\begin{example}
Let $\Heis^1$ be the first Heisenberg group equipped with the Carnot--Carath\'eodory metric $d_{cc}$. Let
\begin{equation}\label{eq:X}
X = \{ (0,t) : 0 \le t \le T\} \subset \Heis^1
\end{equation}
be a bounded line segment on the $t$-axis in $\Heis^1$. Recall that there exists some universal constant $c>0$ so that for any $0\le s < t \le T$,
\begin{equation}\label{eq:d-cc-s-t}
d_{cc}((0,s),(0,t)) = c |s-t|^{1/2}.
\end{equation}
Thus the restriction of $d_{cc}$ to $X$ is an $L^2$-metric.
\end{example}

For the convenience of the reader, we restate Theorem \ref{main-prop}. 

\begin{theorem}\label{main-prop-2}
For a metric space $(X,d)$, the following conditions are quantitatively equivalent:
\begin{itemize}
\item[(i)] $X$ is a $p$-snowflake for some $1<p\le\infty$.
\item[(ii)] There exists a metric $d'$ on $X$ so that $d'$ is bi-Lipschitz equivalent to $d$ and $(X,d')$ satisfies the $\SRA(\alpha)$ condition for some $0\leq \alpha<1$.
\end{itemize}
\end{theorem}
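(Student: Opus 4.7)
The implication (i) $\Rightarrow$ (ii) follows quickly from Lemma \ref{lem:bestSRA}. Given $d'$ bi-Lipschitz equivalent to $d$ with $\rho := (d')^p$ a metric on $X$ for some $1<p<\infty$, we have $d' = \rho^{1/p}$. Applying Lemma \ref{lem:bestSRA} to $(X, \rho)$ with exponent $1/p < 1$, we see that $(X, d')$ satisfies $\SRA(2^{1/p}-1)$, and $2^{1/p}-1 < 1$. The case $p = \infty$ corresponds to $d'$ being an ultrametric, which is precisely $\SRA(0)$.

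For the nontrivial direction (ii) $\Rightarrow$ (i), both properties are bi-Lipschitz invariants of $d$, so I may assume that $d$ itself satisfies $\SRA(\alpha)$ for some $0 \le \alpha < 1$. The plan is to produce an exponent $q > 1$ such that $d^q$ is bi-Lipschitz equivalent to some metric $\rho$ on $X$; the metric $d'' := \rho^{1/q}$ will then be bi-Lipschitz to $d$ and an $L^q$-metric, exhibiting $(X,d)$ as a $q$-snowflake.

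The construction goes through a quasi-metric intermediate. Fix any $p > 1$ and set $\sigma := d^p$. Assuming without loss of generality that $d(x,z) \le d(z,y)$, the $\SRA(\alpha)$ bound gives $d(x,y) \le \alpha d(x,z) + d(z,y)$. After setting $t := d(x,z)/d(z,y) \in [0,1]$ and maximizing the ratio $(1 + \alpha t)^p/(1 + t^p)$ via calculus (the maximum occurs at $t_\ast = \alpha^{1/(p-1)}$), one obtains the quasi-triangle inequality
\[
\sigma(x,y) \le L(p,\alpha) \bigl(\sigma(x,z) + \sigma(z,y)\bigr), \qquad L(p,\alpha) := (1 + \alpha^{p/(p-1)})^{p-1}.
\]
In particular $\sigma$ is a quasi-ultrametric with constant $K := 2L(p,\alpha)$. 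I would then invoke Frink's metrization theorem applied to the snowflake $\sigma^\epsilon$ with $\epsilon := (\log 2)/(\log K)$: since the quasi-ultrametric constant of $\sigma^\epsilon$ is $K^\epsilon = 2$, Frink produces a metric $\rho$ on $X$ bi-Lipschitz equivalent to $\sigma^\epsilon = d^{p\epsilon}$.

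Setting $q := p\epsilon$, the condition $q > 1$ rearranges algebraically to $\alpha^{p/(p-1)} < 1$, which holds for every $\alpha < 1$ and $p > 1$. The main subtlety is ensuring that the version of Frink's theorem in use applies at the borderline constant $K^\epsilon = 2$; if strict inequality is required, one instead takes $\epsilon$ slightly smaller than $(\log 2)/(\log K)$, and the strict inequality $q > 1$ still survives at the cost of slightly weaker quantitative constants. Both the bi-Lipschitz constants and the exponent $q$ are explicit functions of $\alpha$ (via $L(p,\alpha)$ and $K$), yielding the claimed quantitative equivalence.
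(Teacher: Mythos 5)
Your proof is correct, and while your easy direction (i) $\Rightarrow$ (ii) coincides with the paper's (both go through Lemma \ref{lem:bestSRA}, with the $p=\infty$ case being the ultrametric/$\SRA(0)$ identification), your argument for (ii) $\Rightarrow$ (i) takes a genuinely different route. The paper passes through the $\delta$-uniformly non-convex condition: Lemma \ref{lemma:auxUNC} shows that an $\SRA(\alpha)$ space is $\delta$-UNC with $\delta = \tfrac12\cdot\tfrac{1-\alpha}{1+\alpha}$ (a short direct computation), and then the Tyson--Wu theorem (Proposition \ref{prop:TW}) is quoted to conclude that the space is a $q(\delta)$-snowflake. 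You instead verify directly that $\sigma = d^p$ is a quasi-metric with sum-constant $L(p,\alpha) = (1+\alpha^{p/(p-1)})^{p-1}$; your calculus is right, since the $\SRA(\alpha)$ inequality with $d(x,z)\le d(z,y)$ does force $d(x,y)\le \alpha d(x,z)+d(z,y)$, and the maximum of $(1+\alpha t)^p/(1+t^p)$ on $[0,1]$ is attained at $t=\alpha^{1/(p-1)}$ with value $L(p,\alpha)$. You then invoke Frink's metrization lemma on $\sigma^\epsilon$; the condition $q=p\epsilon>1$ does reduce to $\alpha^{p/(p-1)}<1$ as you claim, and the classical chain construction does work at the borderline max-constant $2$ (yielding $\tfrac14\sigma^\epsilon\le\rho\le\sigma^\epsilon$), so your hedge about the borderline case is unnecessary but harmless. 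Both proofs ultimately delegate the same kind of chain construction to an external result (Tyson--Wu versus Frink/Mac\'ias--Segovia); yours is arguably the more classical reference and, after optimizing over $p$ (letting $p\to\infty$), yields the slightly larger snowflake exponent $\log 2/\log(1+\alpha)$ compared with the paper's $\log 2/\log(1+\tfrac{2\alpha}{1+\alpha^2})$ from \eqref{eq:q}, while the paper's route is shorter given that Proposition \ref{prop:TW} is already stated and available. Both give the claimed quantitative dependence on $\alpha$ alone.
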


The implication (i) $\Rightarrow$ (ii) follows from Lemma \ref{lem:bestSRA}. More precisely, that lemma implies that if $d$ is an $L^p$ metric on $X$ for some $p>1$, then $(X,d)$ satisfies the $\SRA(2^{1/p}-1)$ condition. We focus on the reverse implication (ii) $\Rightarrow$ (i). To this end, we consider a third condition on metric spaces of a similar nature to the $L^p$-metric and $\SRA(\alpha)$ conditions.

\begin{definition}[Uniformly non-convex metric spaces]\label{def:UNC}
Let $0<\delta<\tfrac12$. A metric space $(X,d)$ is {\it $\delta$-uniformly non-convex} ($\delta$-UNC) if for every $x,y \in X$ there exists $\lambda \in (\delta,1-\delta)$ so that the set
$$
B(x,(\lambda+\delta)d(x,y)) \cap B(y,(1-\lambda+\delta)d(x,y))
$$
is empty. Here $B(x,r)$ denotes the closed ball in $X$ with center $x$ and radius $r$.
\end{definition}

The uniform non-convexity condition was also introduced in \cite{TW}. Intuitively, a space $(X,d)$ is UNC if for every pair of points $x,y \in X$, there is a relatively large gap along the straight line path between $x$ and $y$. The precise condition in the definition is formulated to account for the fact that in a general metric space, such straight line paths may not exist. If $(X,d)$ is a normed vector space, then the UNC condition can be reformulated as a {\it uniformly linearly non-convex} (ULNC) condition using the segment $[x,y]$ for $x,y \in X$; see \cite[Definition 3.4]{TW} for details.

We make use of the following result which can be found in \cite[Theorem 1.5]{TW}, see also the subsequent comments.

\begin{proposition}[Tyson--Wu]\label{prop:TW}
For each $0<\delta<\tfrac12$ there exists $q=q(\delta)>0$ so that if $(X,d)$ be a $\delta$-UNC metric space, then $(X,d)$ is a $q$-snowflake. 
\end{proposition}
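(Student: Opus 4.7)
The strategy is to construct, for some $q = q(\delta) > 1$ to be determined, a concrete $L^q$-metric on $X$ bi-Lipschitz equivalent to $d$, using a chain construction. Set
\[
d_q(x,y) := \left( \inf \sum_{i=1}^n d(x_{i-1}, x_i)^q \right)^{1/q},
\]
where the infimum is over all finite chains $x = x_0, x_1, \ldots, x_n = y$ in $X$. Since $d_q^q$ is subadditive under chain concatenation, $d_q$ is automatically an $L^q$-metric, and the trivial single-edge chain yields $d_q(x,y) \le d(x,y)$. Bi-Lipschitz equivalence thus reduces to a uniform reverse estimate: for some $c = c(\delta, q) > 0$ and every finite chain from $x$ to $y$,
\[
\sum_{i=1}^n d(x_{i-1}, x_i)^q \ge c^q \, d(x,y)^q.
\]

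I would prove this chain inequality by induction on the chain length $n$. The case $n = 1$ is trivial. For the inductive step, given a chain of length $n \ge 2$ with $D := d(x, y)$, apply $\delta$-UNC to the pair $(x, y)$ to produce $\lambda \in (\delta, 1 - \delta)$ for which the associated balls of radii $(\lambda+\delta)D$ around $x$ and $(1-\lambda+\delta)D$ around $y$ are disjoint. Choose the split index $k^* := \min\{k : d(x, x_k) > (\lambda + \delta) D\}$, so that $d(x, x_{k^*-1}) \le (\lambda + \delta) D$; the UNC condition applied at $z = x_{k^*-1}$ then forces $d(x_{k^*-1}, y) > (1 - \lambda + \delta) D$. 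Splitting the chain into the initial subchain ending at $x_{k^*-1}$, the bridging edge $x_{k^*-1} \to x_{k^*}$, and the terminal subchain from $x_{k^*}$ onwards, and applying the inductive hypothesis to each subchain, reduces the inductive step to a three-variable optimization over the normalized quantities $(a, e, b) := (d(x, x_{k^*-1}), d(x_{k^*-1}, x_{k^*}), d(x_{k^*}, y))/D$ subject to $a \le \lambda + \delta$, $b + e > 1 - \lambda + \delta$, and the triangle constraint $a + e + b \ge 1$.

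The main obstacle is that a single-scale application of UNC does not close the induction. Indeed, ``shortcut chains'' with $a = 0$ (i.e. $k^* = 1$) and $e + b$ close to $1$ satisfy all the constraints yet produce a ratio that falls strictly below $c^q$ for every $c > 0$ and $q > 1$, as one verifies by a direct Lagrange-multiplier calculation which gives the optimal value $(1+c^{q/(q-1)})^{1-q} < 1$. The resolution is to iterate UNC at multiple scales: re-apply the condition to the nested subpair $(x_{k^*-1}, y)$ to split the terminal subchain further, and continue recursively, producing a hierarchy of split points at geometrically decreasing scales. The cumulative $(1 + 2\delta)$-slack accumulated across $O(\log_2 n)$ levels of this hierarchy, balanced against the $n^{1-q}$ deficit inherent in the $q$-power sum inequality, allows the selection of $q = q(\delta)$ sufficiently close to $1$ (with $q(\delta) \to 1$ as $\delta \to 0$) and a uniform constant $c = c(\delta) > 0$ closing the induction independently of chain length. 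The principal technical work is the multi-scale bookkeeping and the derivation of the precise threshold $q(\delta)$; full details are provided in the proof of \cite[Theorem 1.5]{TW}.
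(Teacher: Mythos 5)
The paper does not actually prove this proposition: it is imported from \cite[Theorem 1.5]{TW}, with only the quantitative value $q=\log 2/(\log 2-\log(1+4\delta^2))$ recorded afterwards, so there is no in-paper argument to compare against. Your reduction is nevertheless the right frame: the chain quantity $d_q(x,y)^q=\inf\sum_i d(x_{i-1},x_i)^q$ is the largest $L^q$-metric dominated by $d$, so everything hinges on the uniform chain inequality, and the first-exit index $k^*$ is the correct device. The genuine gap is in your analysis of the resulting optimization. After normalizing $D=d(x,y)=1$, you list the constraints $a\le\lambda+\delta$, $e+b>1-\lambda+\delta$ and $a+e+b\ge 1$, but you drop the constraint $a+e\ge d(x,x_{k^*})>\lambda+\delta$, which is precisely what the minimality of $k^*$ provides. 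With it, the ``shortcut chain'' you invoke is infeasible: if $k^*=1$ then $a=0$ forces $e=d(x,x_1)>\lambda+\delta>2\delta$, so the bridging edge alone contributes $e^q>(2\delta)^q\ge c^q$ once $c\le 2\delta$. Your Lagrange computation minimizes $e^q+c^qb^q$ over $e+b\ge 1$ with $e$ unconstrained, which is not the feasible set, and the conclusion you draw from it --- that a single application of UNC cannot close the induction --- is therefore spurious.

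With the full constraint set the single-scale induction does close. Either $e\ge c$, in which case $e^q\ge c^q$ directly; or $e<c\le\delta/2$, in which case $a>\lambda+\delta-c$ and $b>1-\lambda+\delta-c$, hence $a+b>1+2\delta-2c\ge 1+\delta$, and the power-mean inequality gives $a^q+b^q\ge 2^{1-q}(a+b)^q\ge 2^{1-q}(1+\delta)^q\ge 1$ provided $q\le\log 2/(\log 2-\log(1+\delta))$, whence $c^qa^q+e^q+c^qb^q\ge c^q$. (The degenerate cases $k^*=1$ and $k^*=n$ fall into the first alternative automatically, since then $e$ exceeds $\lambda+\delta$ or $1-\lambda+\delta$ respectively.) This yields a threshold $q(\delta)$ of the same shape as the value quoted in the paper. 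By contrast, your proposed repair --- a hierarchy of splits ``at geometrically decreasing scales'' over ``$O(\log_2 n)$ levels'' balanced against an ``$n^{1-q}$ deficit'' --- is not carried out, and its premise is shaky: $d(x_{k^*-1},y)$ is only known to lie between $2\delta D$ and $2D$, so the scales need not decrease at all. Since the final sentence defers the entire content to the cited reference, the proposal as written identifies the correct strategy but does not constitute a proof, and the one piece of analysis it does contain is in error.
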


The proof in \cite{TW} shows that we may choose
$$
q = \frac{\log 2}{\log 2 - \log(1+4\delta^2)}.
$$

To complete the proof of the implication (ii) $\Rightarrow$ (i) in Theorem \ref{main-prop-2} it suffices to prove the following lemma.

\begin{lemma}\label{lemma:auxUNC}
For each $0<\alpha<1$ there exists $\delta = \delta(\alpha) \in (0,\tfrac12)$ so that if $(X,d)$ is an $\SRA(\alpha)$ metric space then $(X,d)$ is $\delta$-UNC.
\end{lemma}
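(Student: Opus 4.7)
The plan is to test the UNC condition at the symmetric midpoint $\lambda = \tfrac12$, exploiting the fact that the two branches of the maximum in \eqref{SRA} become balanced under such a symmetric choice. Fix distinct $x, y \in X$ and write $r := d(x,y)$. I would assume for contradiction that there exists $z \in B(x, (\tfrac12+\delta)r) \cap B(y, (\tfrac12+\delta)r)$ and apply the $\SRA(\alpha)$ inequality \eqref{SRA} to the triple $(x, z, y)$. Since both $d(x,z)$ and $d(z,y)$ are bounded by $(\tfrac12 + \delta)r$, each branch of the maximum is bounded by $(1+\alpha)(\tfrac12 + \delta)r$, yielding $r \le (1+\alpha)(\tfrac12 + \delta)r$.

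Dividing by $r$ and rearranging forces
\[
\delta \ge \frac{1-\alpha}{2(1+\alpha)}.
\]
Thus any choice strictly smaller than this threshold, for instance $\delta(\alpha) := \tfrac{1-\alpha}{4(1+\alpha)}$, produces a contradiction and guarantees that the intersection is empty. Since $\alpha \in (0,1)$, one immediately verifies $\delta(\alpha) \in (0, \tfrac12)$, so $\lambda = \tfrac12$ lies in the required interval $(\delta, 1-\delta)$ of Definition \ref{def:UNC}. The edge cases $z = x$ and $z = y$ cause no trouble: for example $d(y,x) = r > (\tfrac12 + \delta) r$ since $\delta < \tfrac12$, so neither of these points belongs to the intersection of balls.

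There is no real obstacle here; the only conceptual input is selecting the symmetric test value $\lambda = \tfrac12$, at which the asymmetry of the two branches in \eqref{SRA} disappears and one avoids splitting into cases. Combined with Proposition \ref{prop:TW}, this lemma completes the implication (ii) $\Rightarrow$ (i) of Theorem \ref{main-prop-2}, with the snowflake exponent obtained quantitatively by composing the formula $q = q(\delta)$ recalled after Proposition \ref{prop:TW} with the function $\delta = \delta(\alpha)$ above.
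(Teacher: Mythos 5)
Your proof is correct and follows essentially the same route as the paper: test the UNC condition at the symmetric value $\lambda=\tfrac12$, apply \eqref{SRA} to the triple, and deduce $d(x,y)\le(1+\alpha)(\tfrac12+\delta)d(x,y)$. The only difference is that the paper takes $\delta=\tfrac{1-\alpha}{2(1+\alpha)}$ exactly (and then must argue around the boundary case $d(x,z)=d(z,y)$, where the needed strict inequality can fail for closed balls), whereas your strictly smaller choice $\delta=\tfrac{1-\alpha}{4(1+\alpha)}$ sidesteps that issue cleanly.
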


\begin{proof}
For $0<\alpha<1$ we set
$$
\delta = \frac12 \cdot \frac{1-\alpha}{1+\alpha}.
$$
Assume that $(X,d)$ is an $\SRA(\alpha)$ metric space. Let $x,y \in X$. We set $\lambda = \tfrac12$ and let $z \in X$ be arbitrary. We must show that either
$d(x,z) > (\lambda+\delta) d(x,y)$ or $d(z,y) > (1-\lambda+\delta) d(x,y)$.

Set $A = d(x,z)$, $B=d(z,y)$, and $C=d(x,y)$. Without loss of generality we may assume that $A\neq B$. By assumption, $C \le \max \{ A + \alpha B, \alpha A + B \}$. We must show that
$$
C < \max \left\{ \frac{A}{\lambda+\delta} , \frac{B}{1-\lambda+\delta} \right\} = \frac1{\tfrac12+\delta} \max\{A,B\}.
$$
For the given choice of $\delta = \delta(\alpha)$, observe that
$$
\frac1{\tfrac12+\delta} = 1+\alpha.
$$
It thus suffices to prove that $\max \{ A + \alpha B, \alpha A + B \} < (1+\alpha) \max\{A,B\}$, which is obvious. 
\end{proof}

\begin{remark}
The above proof, together with Proposition \ref{prop:TW}, shows that every metric space bi-Lipschitz equivalent to a metric satisfying the $\SRA(\alpha)$ condition for some $0<\alpha<1$ is a $q$-snowflake with
\begin{equation}\label{eq:q}
q = q(\alpha) = \frac{\log 2}{\log(1+\tfrac{2\alpha}{1+\alpha^2})}.
\end{equation}
\end{remark}

\begin{remark}
The special case $\alpha=0$ in the spectrum of $\SRA(\alpha)$ conditions marks a qualitative change in the topology of the space. Recall that $(X,d)$ satisfies the $\SRA(0)$ condition if and only if $(X,d)$ is ultrametric, and every ultrametric space is totally disconnected. On the other hand, the $\SRA(\beta)$ condition for positive $\beta$ is satisfied by the snowflaked space $(X,d^\beta)$ for {\bf any} given metric space $(X,d)$, and the snowflaking transformation $(X,d) \mapsto (X,d^\beta)$ does not affect the topology of the space. It is thus easy to construct examples of connected metric spaces which satisfy the $\SRA(\beta)$ condition for any given positive $\beta$.

After allowing for a bi-Lipschitz change of metric, the following are equivalent:
\begin{itemize}
\item[$\bullet$] $(X,d)$ is bi-Lipschitz equivalent to an ultrametric space,
\item[$\bullet$] $(X,d)$ is bi-Lipschitz equivalent to a metric space satisfying the $\SRA(0)$ condition,
\item[$\bullet$] $(X,d)$ is an $\infty$-snowflake,
\item[$\bullet$] $(X,d)$ is uniformly disconnected (\cite{davidsemmes}).
\end{itemize}
\end{remark}

\subsection{Bi-Lipschitz embeddability of metric spaces with small rough angles}\label{subsec:sra-embeddability}

A classical theorem by Assouad \cite{A} states that every doubling snowflake metric space can be bi-Lipschitz embedded into some finite-dimensional Euclidean space. In view of the equivalence between the $\SRA$ condition (up to bi-Lipschitz equivalence) and the snowflake condition, we obtain a corresponding statement for metric spaces with small rough angles.

Recall that a metric space is called {\em (metrically) doubling} if there is a constant $C$ so that every ball of radius $r$ can be covered by at most $C$ balls of radius $r/2$.

\begin{proposition}\label{prop:SRAembeddability}
Let $(X,d)$ be a doubling metric space satisfying the $\SRA(\alpha)$ condition for some $\alpha<1$. Then $(X,d)$ embeds into some finite-dimensional Euclidean space $\R^N$ by a bi-Lipschitz map.
\end{proposition}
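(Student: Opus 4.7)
The plan is to reduce the proposition to the classical Assouad embedding theorem by means of Theorem \ref{main-prop}. Assouad's theorem asserts that for any doubling metric space $(Y,\rho)$ and any $\epsilon \in (0,1)$, the snowflaked space $(Y,\rho^\epsilon)$ admits a bi-Lipschitz embedding into some finite-dimensional Euclidean space $\R^N$. The strategy is therefore to exhibit $(X,d)$, up to bi-Lipschitz equivalence, as a snowflake of a doubling metric space, and then invoke Assouad.

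First, I would apply Theorem \ref{main-prop} to the hypothesis that $(X,d)$ satisfies the $\SRA(\alpha)$ condition for some $\alpha < 1$. This yields a metric $d'$ on $X$, bi-Lipschitz equivalent to $d$, together with an exponent $p > 1$ so that $d'$ is an $L^p$-metric; equivalently, $d' = (d'')^{1/p}$ for some metric $d''$ on $X$. Let $L\ge 1$ denote a bi-Lipschitz constant relating $d$ and $d'$.

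Next, I would check that $(X,d'')$ is doubling. Since $(X,d)$ is doubling by assumption and bi-Lipschitz equivalence preserves the doubling property, $(X,d')$ is doubling. The identity $B_{d'}(x,r) = B_{d''}(x,r^p)$ translates a covering of a $d'$-ball of radius $r$ by $C$ balls of radius $r/2$ into a covering of the $d''$-ball of radius $r^p$ by $C$ balls of $d''$-radius $(r/2)^p = r^p/2^p$. Iterating this covering $\lceil p \rceil$ times shrinks the $d''$-radius by a factor of two, showing that $(X,d'')$ is doubling with constant at most $C^{\lceil p \rceil}$.

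Finally, I would apply the Assouad embedding theorem to $(X,d'')$ with the snowflake exponent $\epsilon = 1/p \in (0,1)$. This produces $N \in \N$ and a bi-Lipschitz embedding $F \colon (X,(d'')^{1/p}) \to \R^N$, i.e., a bi-Lipschitz embedding of $(X,d')$ into $\R^N$. Precomposing with the identity map $(X,d) \to (X,d')$, which is $L$-bi-Lipschitz, yields the desired bi-Lipschitz embedding of $(X,d)$ into $\R^N$. There is no essential obstacle here; the proof is a direct concatenation of Theorem \ref{main-prop}, the standard fact that doubling is preserved under snowflake roots of the metric, and Assouad's theorem. The only minor point to verify carefully is the doubling transfer from $d'$ to $d''$, which follows from the quantitative covering bookkeeping sketched above.
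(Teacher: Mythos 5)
Your proposal is correct and follows essentially the same route as the paper: the paper's proof simply notes that the $\SRA(\alpha)$ condition makes $(X,d)$ a $q$-snowflake (via Theorem \ref{main-prop}) and then invokes Assouad's embedding theorem. Your additional bookkeeping on transferring the doubling property from $d'$ to $d''$ is correct (though a single covering step already suffices, since $(r/2)^p \le r^p/2$ for $p>1$, so no iteration is needed).
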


In fact, assuming that $(X,d)$ satisfies the $\SRA(\alpha)$ condition for some $\alpha<1$, then $(X,d)$ embeds bi-Lipschitzly into some finite-dimensional Euclidean space if and only if $(X,d)$ is doubling.

\begin{proof}
If $(X,d)$ satisfies the $\SRA(\alpha)$ condition for some $\alpha<1$, then $(X,d)$ is a $q$-snowflake for some $q>1$. The conclusion then follows from Assouad's embedding theorem.
\end{proof}

Recall that a metric space is doubling if and only if it has finite Assouad dimension. It is well known that the doubling property alone is not sufficient to ensure bi-Lipschitz embeddability into a finite-dimensional Euclidean space; the canonical counterexample in this regard is the sub-Riemannian Heisenberg group \cite{Semmes1996}. 

\begin{remark}
The question of determining the best possible exponent $N$ for a Euclidean target space in Assouad's theorem (in terms of the snowflaking parameter and the Assouad dimension of the source) has been studied extensively, see for instance \cite{NN}, \cite{DaSn}, \cite{Tao}, \cite{Ry}. We briefly comment on the corresponding problem for spaces satisfying a small rough angles condition, relating the minimal possible target dimension for these two problems via Theorem \ref{main-prop-2}.

More precisely, let us denote by $N_\snowflake(n,p)$, $1<p\le\infty$, the smallest positive integer $N$ so that every $L^p$ metric space $(X,d)$ with Assouad dimension strictly less than $n$ bi-Lipschitz embeds into $\R^N$. Similarly, denote by $N_\sra(n,\alpha)$ the smallest positive integer $N$ so that every metric space $(X,d)$ satisfying the $\SRA(\alpha)$ condition and with Assouad dimension strictly less than $n$ bi-Lipschitz embeds into $\R^N$.

It follows from a theorem of Luukkainen and Movahedi-Lankarani \cite[Proposition 3.3]{LM} that $N_\snowflake(n,\infty) = n$. Since the $L^\infty$ metric condition and the $\SRA(0)$ condition are equivalent, we also have $N_\sra(n,0) = n$.

Naor and Neiman \cite{NN} proved that $N_\snowflake(n,\alpha)$ is bounded above by a constant depending only on $n$ provided $\alpha \in [\tfrac12,1)$, see also David--Snipes \cite{DaSn} for an alternate proof of this fact.

\begin{proposition}\label{prop:N-snowflake-vs-N-sra}
For any $n \ge 1$ and $0\le\alpha<1$, we have
\begin{equation}\label{eq:N-snowflake-to-N-sra}
N_{\mbox \em snow}(n,\alpha) \le N_{\mbox {\em SRA}}\left( \left\lceil \frac{n}{\alpha} \right\rceil, 2^\alpha-1 \right)
\end{equation}
and
\begin{equation}\label{eq:N-sra-to-N-snowflake}
N_{\mbox {\em SRA}}(n,\alpha) \le N_\snowflake\left( \left\lceil \frac{n}{q(\alpha)} \right\rceil, \frac1{q(\alpha)} \right),
\end{equation}
where $q(\alpha)$ is defined in \eqref{eq:q}.
\end{proposition}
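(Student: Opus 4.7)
My approach is to chain together the two main tools of this section, Lemma \ref{lem:bestSRA} and Theorem \ref{main-prop-2}, with the bi-Lipschitz invariance of Assouad dimension and the standard transformation rule under snowflaking: if $d = (d'')^{\beta}$ for some $\beta \in (0,1)$, then $\dim_A(X,d) = \dim_A(X,d'')/\beta$. I read the parameter $n$ in $N_\snowflake(n,\alpha_0)$ in the Naor--Neiman style, namely as a bound on the Assouad dimension of the \emph{underlying unsnowflaked} metric $d''$ where $d = (d'')^{\alpha_0}$; under this reading both stated bounds become natural.

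For inequality \eqref{eq:N-snowflake-to-N-sra} I would begin with an $L^{1/\alpha}$-metric space $(X,d) = (X,(d'')^\alpha)$ whose unsnowflaked metric satisfies $\dim_A(X,d'') < n$. Lemma \ref{lem:bestSRA} directly gives that $(X,d)$ satisfies the $\SRA(2^\alpha-1)$ condition, while the snowflake transformation rule yields $\dim_A(X,d) = \dim_A(X,d'')/\alpha < n/\alpha \le \lceil n/\alpha\rceil$. An immediate application of the definition of $N_\sra$ produces the desired bi-Lipschitz embedding of $(X,d)$ into $\R^{N_\sra(\lceil n/\alpha\rceil,\, 2^\alpha-1)}$.

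For inequality \eqref{eq:N-sra-to-N-snowflake} I would begin with a metric space $(X,d)$ satisfying the $\SRA(\alpha)$ condition with $\dim_A(X,d) < n$. Theorem \ref{main-prop-2} together with the explicit exponent $q(\alpha)$ recorded in \eqref{eq:q} supplies a bi-Lipschitz-equivalent metric $d'$ on $X$ making $(X,d')$ into an $L^{q(\alpha)}$-metric space. Writing $d' = (d'')^{1/q(\alpha)}$, bi-Lipschitz invariance together with the snowflake transformation rule yields $\dim_A(X,d'') = \dim_A(X,d')/q(\alpha) = \dim_A(X,d)/q(\alpha) < n/q(\alpha) \le \lceil n/q(\alpha)\rceil$. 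The definition of $N_\snowflake$ then embeds $(X,d')$ into a Euclidean space of the stated dimension, and post-composing with the bi-Lipschitz change of metric transfers this to a bi-Lipschitz embedding of $(X,d)$.

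The argument carries no serious analytic obstacle; the only point to watch is careful accounting---remembering that the parameter $n$ in $N_\snowflake$ controls the dimension at the \emph{unsnowflaked} level, and inserting the factor $1/\alpha$ or $1/q(\alpha)$ exactly once each time the metric is snowflaked or un-snowflaked. Once these bookkeeping conventions are fixed, both inequalities reduce to one-line applications of the tools already established earlier in the section.
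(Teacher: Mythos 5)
Your proof is correct and follows essentially the same route as the paper's: Lemma \ref{lem:bestSRA} plus the scaling of Assouad dimension under snowflaking for the first inequality, and Theorem \ref{main-prop-2} with the exponent $q(\alpha)$ from \eqref{eq:q} plus bi-Lipschitz invariance of Assouad dimension for the second. Your explicit remark that the parameter $n$ in $N_\snowflake$ must be read as bounding the Assouad dimension of the \emph{unsnowflaked} metric is exactly the convention the paper's own proof implicitly uses, so there is no gap.
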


\begin{proof}
Let $(X,d)$ be a metric space with Assouad dimension strictly less than $n$ and let $0<\alpha<1$. Then the Assouad dimension of $(X,d^\alpha)$ is strictly less than $\tfrac{n}{\alpha}$ and, by Lemma \ref{lem:bestSRA}, $(X,d^\alpha)$ satisfies the $\SRA(2^\alpha-1)$ condition. The first inequality \eqref{eq:N-snowflake-to-N-sra} follows.

Conversely, let $(X,d)$ satisfy the $\SRA(\alpha)$ condition and have Assouad dimension strictly less than $n$. Then $(X,d)$ is a $q(\alpha)$-snowflake by Theorem \ref{main-prop}, i.e., $d$ is bi-Lipschitz equivalent to $(d')^{1/q(\alpha)}$ for some metric $d'$ on $X$. Moreover, the Assouad dimension of $(X,d')$ is strictly less than $n/q(\alpha)$ and the relevant snowflaking parameter is $1/q(\alpha)$. The second inequality \eqref{eq:N-sra-to-N-snowflake} follows.
\end{proof}
\end{remark}

We also point the reader to Remark \ref{rem:sra-and-embeddings} for more information about the $\SRA(\alpha)$ condition and bi-Lipschitz embeddability.

The following well-known result will be used in the proof of Proposition \ref{prop:sequence-embeddings} in the following section. This result has appeared several times in the literature, see e.g.\ \cite{lem:isosceles} and \cite[Theorem 6.7]{abbw:ultrametric}.

\begin{theorem}[Lemin, Aschbacher--Baldi--Baum--Wilson]\label{th:finite-ultrametric-embeddability}
Let $(X,d)$ be an ultrametric space of cardinality $N+1$. Then $X$ can be isometrically embedded into $\R^N$.
\end{theorem}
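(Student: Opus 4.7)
The plan is to proceed by induction on $N$, but \emph{strengthening} the conclusion to: every ultrametric space $(X,d)$ of cardinality $N+1$ admits an isometric embedding into $\R^N$ whose image lies on a sphere of radius at most $D/\sqrt{2}$, where $D := \diam(X)$. This stronger statement is essential to assemble embeddings in the inductive step; the conclusion of the theorem then follows immediately.

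The base cases $N = 0, 1$ are trivial. For the inductive step, I would use the fundamental ultrametric fact that $x \sim y \iff d(x,y) < D$ is an equivalence relation on $X$, and partition $X$ into equivalence classes $X_1, \ldots, X_k$ with $k \geq 2$. Points in distinct classes are then at distance exactly $D$ (by maximality of $D$ together with the ultrametric inequality), while $L_i := \diam(X_i) < D$ for each $i$. Applying the inductive hypothesis to each $X_i$ (of cardinality $n_i \le N$) yields embeddings $\phi_i : X_i \to \R^{n_i - 1}$ whose images lie on spheres of radius $r_i \le L_i/\sqrt{2} < D/\sqrt{2}$; after translation, assume each is centered at the origin of $\R^{n_i-1}$.

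Next, I would assemble these into a single embedding. Work inside an auxiliary $\R^k$ with orthonormal basis $e_1, \ldots, e_k$, and set
\[
s_i := \sqrt{D^2/2 - r_i^2}, \qquad c_i := s_i\, e_i,
\]
noting that $s_i > 0$ \emph{precisely because of the strengthened hypothesis} $r_i < D/\sqrt{2}$. The points $c_1, \ldots, c_k$ span a $(k-1)$-dimensional affine subspace $V \subset \R^k$. Taking pairwise orthogonal copies $W_i \cong \R^{n_i - 1}$, also orthogonal to $\R^k$, the affine subspace $V \oplus W_1 \oplus \cdots \oplus W_k$ of the auxiliary ambient $\R^{k} \oplus W_1 \oplus \cdots \oplus W_k = \R^{N+1}$ has dimension $(k-1) + \sum_i (n_i - 1) = N$ and may be identified isometrically with $\R^N$. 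Define $\Phi : X \to \R^N$ by $\Phi(x) := c_i + \phi_i(x)$ for $x \in X_i$. A Pythagorean computation shows that intra-class distances equal $d(x,y)$ and, for $x \in X_i, y \in X_j$ with $i \ne j$,
\[
\|\Phi(x) - \Phi(y)\|^2 = \|c_i - c_j\|^2 + r_i^2 + r_j^2 = (s_i^2 + s_j^2) + r_i^2 + r_j^2 = D^2.
\]

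It remains to verify the strengthened sphere condition on $\Phi(X)$, and this is where the specific choice $c_i = s_i e_i$ pays off: in the auxiliary $\R^{N+1}$, every image point has norm $\sqrt{s_i^2 + r_i^2} = D/\sqrt{2}$, so $\Phi(X)$ lies on the sphere of radius $D/\sqrt{2}$ centered at the origin. The intersection of this sphere with the $N$-dimensional affine subspace containing $\Phi(X)$ is a sphere of radius at most $D/\sqrt{2}$, closing the induction. The main obstacle is identifying the right strengthened inductive hypothesis at the outset: without controlling the circumradius of each $X_i$ by $\diam(X_i)/\sqrt{2}$, one cannot guarantee that the $s_i$ are real, and the constraint that \emph{all} inter-class distances equal the exact common value $D$ collapses.
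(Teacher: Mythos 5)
The paper offers no proof of this statement: it is quoted verbatim from the cited sources (Lemin; Aschbacher--Baldi--Baum--Wilson), so there is nothing internal to compare against. Your argument is correct and is essentially the classical proof of this result: the partition of $X$ by the equivalence relation $d(x,y)<\diam(X)$ (valid precisely because of the ultrametric inequality), the strengthened inductive hypothesis bounding the circumradius by $\diam(X)/\sqrt{2}$, and the orthogonal assembly with centers $c_i=s_ie_i$ satisfying $s_i^2+r_i^2=D^2/2$ are exactly the ingredients of the standard induction, and the dimension count $(k-1)+\sum_i(n_i-1)=N$ and the preservation of the sphere condition both check out. No gaps.
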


\section{$\SRA(\alpha)$ subsets of metric spaces}\label{section:subsets}

In this section we explore different settings in which a given metric space $(X,d)$ is guaranteed either to be free of sufficiently large subsets with the $\SRA(\alpha)$ condition or to contain arbitrarily large (or even infinite) subsets with that condition. These conditions turn out to be relevant for the question of whether or not all rough self-contracting curves are rectifiable; see Section \ref{sec:self-contracting}.

\subsection{$\SRA(\alpha)$ free and $\SRA(\alpha)$ full metric spaces}

The following definitions are taken from \cite{Z} and \cite{LOZ}. We denote by $\#S$ the cardinality of a finite set $S$.

\begin{definition}[$\SRA(\alpha)$ free and full spaces]\label{definition:SRAfree}
Let $0\leq\alpha<1$. 
\begin{enumerate}
\item A metric space $(X,d)$ is said to be {\em $\SRA(\alpha)$ free} if there exists $N \in \N$ such that for each $F\subset X$, if $F$ satisfies the $\SRA(\alpha)$ condition then $\#F\leq N$. 
\item A metric space $(X,d)$ is {\em $\SRA(\alpha)$ full} if there exists a subset $F\subset X$ which satisfies the $\SRA(\alpha)$ condition and has $\#F=\infty$.
\end{enumerate}
\end{definition}

It follows directly from these definitions that if $(X,d)$ is a metric space and $\alpha \le \beta$ then
\begin{itemize}
\item if $(X,d)$ is $\SRA(\beta)$ free, then $(X,d)$ is $\SRA(\alpha)$ free, and
\item if $(X,d)$ is $\SRA(\alpha)$ full, then $(X,d)$ is $\SRA(\beta)$ full.
\end{itemize}
We emphasize that the $\SRA(\alpha)$ full condition is stronger than the condition that the space not be $\SRA(\alpha)$ free. A space is not $\SRA(\alpha)$ free if it contains subsets of arbitrarily large cardinality which satisfy the $\SRA(\alpha)$ condition, while it is $\SRA(\alpha)$ full if it contains an infinite subset with that property. However, the $\SRA(\alpha)$ condition is usually not preserved under taking unions of sets. There exist metric spaces which are not $\SRA(\alpha)$ free but are also not $\SRA(\alpha)$ full; see Appendix A (section \ref{appendix-a}).

\begin{example}[Euclidean space]
$\R^n$ is $\SRA(\alpha)$ free for any $\alpha<1$, cf.\ Corollary \ref{EFcor}.
\end{example}

Other examples of $\SRA(\alpha)$ free spaces (see, e.g., \cite[Theorem 2]{LOZ}) include:
\begin{itemize}
\item finite-dimensional Alexandrov spaces of non-negative curvature,
\item finite-dimensional normed vector spaces,
\item complete Berward spaces of non-negative flag curvature,
\item Cayley graphs of virtually abelian groups.
\end{itemize}

By way of contrast, the next set of examples are $\SRA(\alpha)$ full for some $0\leq\alpha<1$.

\begin{example}[Heisenberg group]\label{example:heis}
The first Heisenberg group $\Heis^1$ equipped with the Carnot--Carath\'eodory metric $d_{cc}$ is $\SRA(\sqrt{2}-1)$ full. Indeed, recall that the restriction of $d_{cc}$ to the infinite set $X = \{(0,t):0 \le t \le T\}$ is an $L^2$-metric, so this follows from Lemma \ref{lem:bestSRA}. Note also that $(X,d_{cc})$ does not contain any ultrametric subsets with three or more points; in an ultrametric space, each comparison triangle is isosceles with the two equal sides being the longer ones, but in $X$ each comparison triangle is a right triangle. As a consequence of Proposition \ref{th:SRA-subsets-of-snowflake-spaces}, we will see that $\Heis^1$ equipped with the the metric $d_{cc}$, is in fact $\SRA(\alpha)$ full for any $0<\alpha<1$.
\end{example}

\begin{example}[Hilbert space]\label{example:hilbert}
The Hilbert space $\ell^2$ is $\SRA(\sqrt2-1)$ full. Denote by $(e_k)$ the canonical orthonormal basis for $\ell^2$, i.e., $e_k$ is $1$ in the $k$th position and $0$ in all other positions. Choose a sequence $(c_k)$ of positive numbers with $c_k \to 0$ and let $F = \{x_k:k \in \N\}$, where $x_k = c_k e_k$. Note that $d(x_k,x_\ell) = (c_k^2+c_\ell^2)^{1/2}$ for any $k,\ell \in \N$.

To see that $(F,d)$ satisfies the $\SRA(\alpha)$ condition with $\alpha = \sqrt2-1$, we need to show that
\begin{equation}\label{eq:hilbert1}
\sqrt{c_k^2+c_\ell^2} \le \max \left\{ \sqrt{c_k^2+c_m^2} + \alpha \sqrt{c_\ell^2 + c_m^2} , \alpha \sqrt{c_k^2 + c_m^2} + \sqrt{c_\ell^2 + c_m^2} \right\} \quad \mbox{for all $k,\ell,m \in \N$.}
\end{equation}
The validity of \eqref{eq:hilbert1} is equivalent to the validity of
\begin{equation}\label{eq:hilbert2}
\sqrt{c_k^2+c_\ell^2} \le \max \left\{ c_k + \alpha c_\ell , \alpha c_k + c_\ell \right\} \qquad \mbox{for all $k,\ell \in \N$;}
\end{equation}
one direction is obvious, and for the other direction, fix $k$ and $\ell$ and let $c_m \to 0$. Squaring both sides of \eqref{eq:hilbert2} and rearranging leads to the following equivalent formulation:
\begin{equation}\label{eq:hilbert3}
0 \le \max \left\{ 2\alpha c_k c_\ell - (1-\alpha^2) c_\ell^2, 2 \alpha c_k c_\ell - (1-\alpha^2) c_k^2 \right\} \qquad \mbox{for all $k,\ell \in \N$.}
\end{equation}
Since $\alpha = \sqrt2-1$ we have $2\alpha = 1-\alpha^2$ and so \eqref{eq:hilbert3} reads
$$
0 \le \max \{ c_\ell (c_k-c_\ell) , c_k (c_\ell-c_k) \} \qquad \mbox{for all $k,\ell \in \N$,}
$$
which is clearly satisfied.
\end{example}

\begin{example}[Metric trees]\label{example:metrictree}
This example is taken from \cite[Section 8.2]{LOZ}, where it is used to illustrate the fact that a space can contain arbitrarily large (even infinite) ultrametric subsets and still have the property that all bounded self-contracting curves are rectifiable. We discuss the relationship between the $\SRA(\alpha)$ free condition and rectifiability of rough self-contracting curves in the following section.

Let $t=\{t_i\}$ be a strictly decreasing sequence such that $\lim_{i\to\infty}t_i=0$.  For each $n \in \N$, let $C_i$ be the closed vertical line segment connecting the points $(t_i, 0)$ and $p_i=(t_i,t_i)$.  Additionally, let
$C_0$ be the horizontal segment connecting $(0,0)$ to $(t_i,0)$.   Now, define $T_t=\cup_{i=0}^\infty C_i$ and consider the intrinsic distance on $T_t$, denoted by $d_T$.
\begin{figure}[h]
\begin{center}
\includegraphics[width=5.5cm]{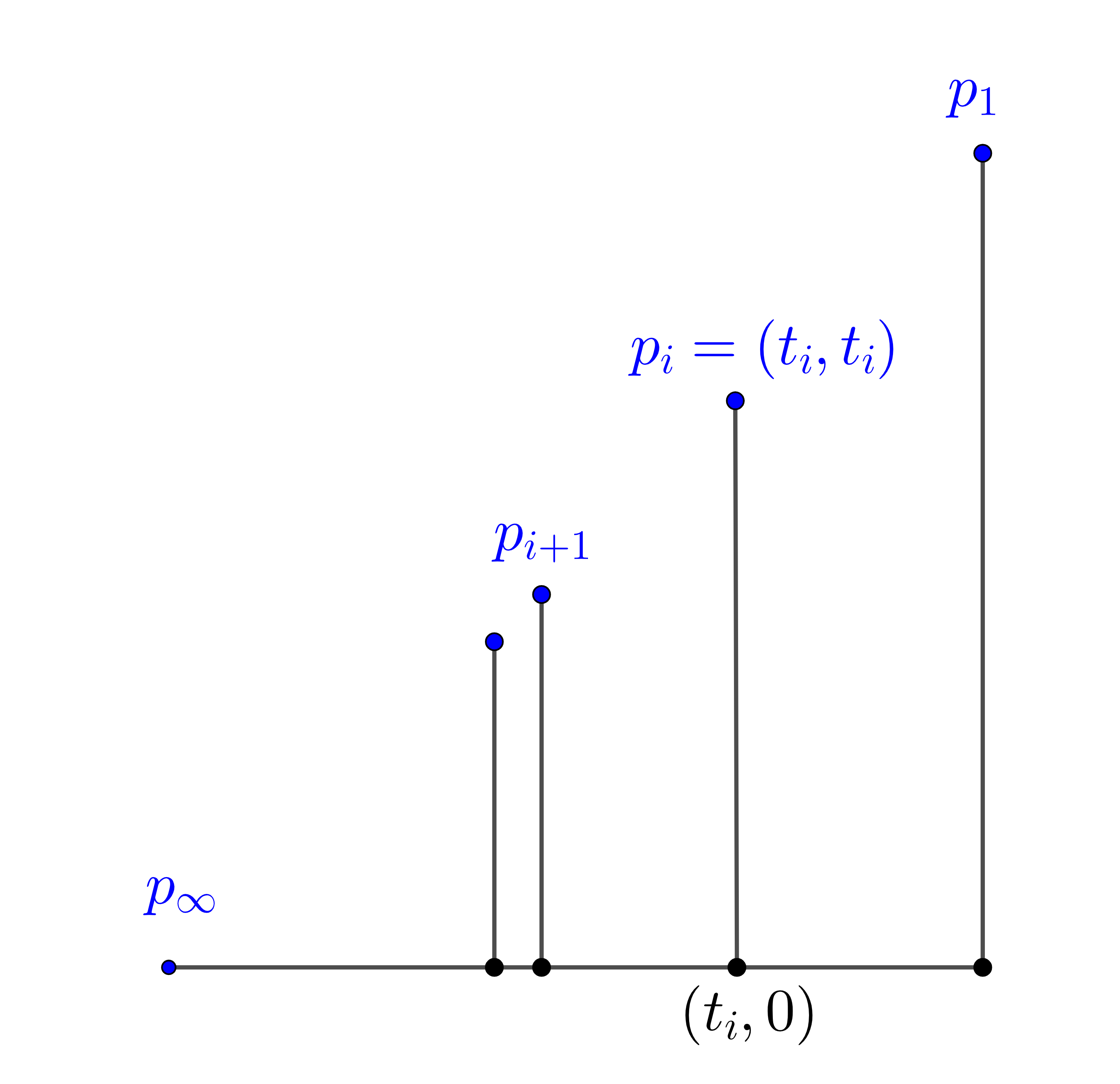}
\caption{Metric tree $T_t$}\label{fig:MetricTree}
\end{center}
\end{figure}

The subset $\{p_i\}_{i=1}^{\infty} \subset T_t$ is an infinite ultrametric set. Indeed, if $j>i$ then $d_T(p_i,p_j)=t_i+t_j+(t_i-t_j)=2t_i$. Therefore, for any $k>j>i$, $d_T(p_i,p_j)=d_T(p_i,p_k)=2t_i$ and $d_T(p_j,p_k)=2t_j$. 

It follows that each such metric tree $(T_t,d_T)$ is $\SRA(0)$ full. 

Notice that the metric tree $(T_t,d_T)$ bi-Lipschitz embeds into $\ell^1(\N)$. If $\{e_i\}_{i\in\N}$ is the canonical basis of $\ell^1(\N)$, one can isometrically embed each $C_i$ into $\R e_i$.
See Proposition \ref{prop:sequence-embeddings} for a precise characterization of trees $(T_t,d_T)$ which bi-Lipschitz embed into a finite-dimensional Euclidean space.
\end{example}

\begin{example}[Laakso graphs]\label{example:laakso}
In \cite[Proposition 33]{LOZ}, the authors constructed infinite subsets of the Laakso graph satisfying the $\SRA(\tfrac35)$ condition. We improve upon this result by constructing infinite ultrametric subsets. The Laakso graph was constructed by Lang-Plaut \cite{LaPl} as a modification of Laakso spaces \cite{La1}; see also \cite{Laa} for a variant construction. This space has been intensively used as a motivating example in the theory of analysis in metric spaces, see e.g.\ \cite{GNRS}, \cite{NPSS}, \cite{LMN}, \cite{DaSc}, \cite{Ty} for a partial list of references.

\begin{figure}[h]
\begin{center}
\includegraphics[width=12cm]{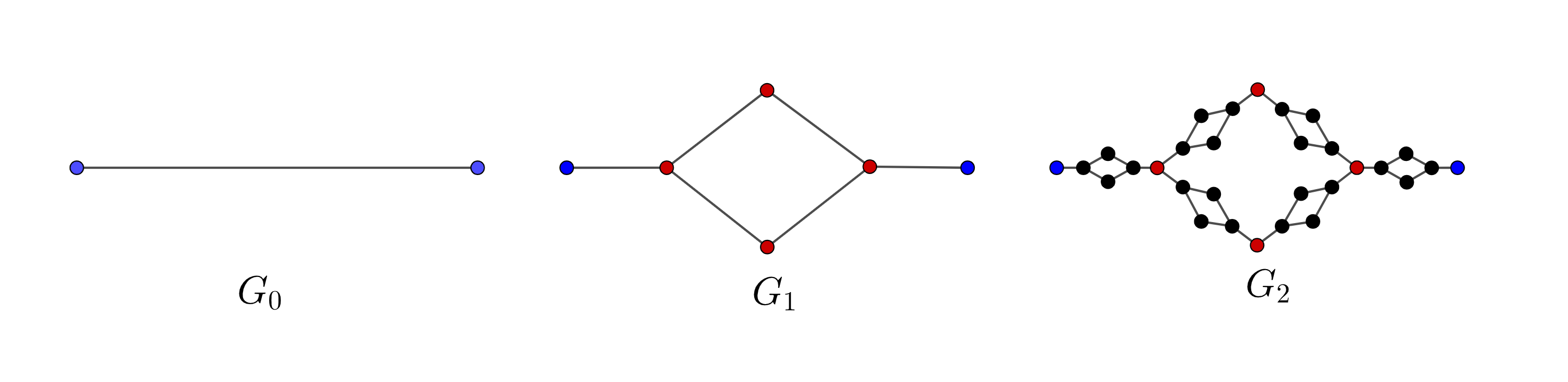}
\caption{Metric graphs $G_0$, $G_1$ and $G_2$}\label{fig:Laakso}
\end{center}
\end{figure}

We define the Laakso graph $G_\infty$ as the inverse limit of the system
\begin{equation}\label{eq:ILS}
G_0 \stackrel{\pi_0}{\longleftarrow} G_1 \stackrel{\pi_1}{\longleftarrow} G_2 \stackrel{\pi_2}{\longleftarrow} \cdots
\end{equation}
of metric graphs $G_m$ shown in Figure \ref{fig:Laakso}. For each $m$, $G_m$ is equipped with the path metric $d_m$ so that the diameter of $G_m$ is equal to one. In particular, $G_0 = [0,1]$, $G_1$ is the metric graph indicated in Figure \ref{fig:Laakso}, and $G_{m+1}$ is obtained from $G_m$ by replacing each edge $e$ in $G_m$ by a scaled copy $\varphi_e(G_1)$ of $G_1$ with scaling factor $4^{-m}$. The projection maps are defined as follows. First, $\pi_0:G_1 \to G_0$ is defined by the condition that each of the two geodesic paths in $G_1$ from $0$ to $1$ is mapped isometrically to the directed line segment $[0,1] = G_0$. Then $\pi_m:G_{m+1} \to G_m$ is defined, on each $\varphi_e(G_1) \subset G_{m+1}$, by conjugating the action of $\pi_0|_{G_1}$ by $\varphi_e$. The collection \eqref{eq:ILS} is an inverse limit system of metric graphs; see \cite[Section 2.2]{CK} for an introduction to this topic. The {\em Laakso graph $G_\infty$} is the inverse limit of the system \eqref{eq:ILS}, see \cite[Section 2.4]{CK}. In particular, $G_\infty$ is equipped with a well-defined metric $d_\infty$ so that
$$
d_\infty(q_0 \leftarrow q_1 \leftarrow q_2 \leftarrow \cdots,q_0' \leftarrow q_1' \leftarrow q_2' \leftarrow \cdots) = \lim_{m\to\infty} d_m(q_m,q_m')
$$
and there exist projection maps $\pi_m^\infty:G_\infty \to G_m$ so that $\pi_m^\infty(q_0 \leftarrow q_1 \leftarrow q_2 \leftarrow \cdots) = q_m$. All of the maps $\pi_m$ and $\pi_m^\infty$ are $1$-Lipschitz.

In what follows, we prove that $G_\infty$ is $\SRA(0)$ full. For each $m \in \N$, we will construct an ultrametric subset $F_m$ with cardinality $2^m$ in the $m$th approximating graph $G_m$. The sets $F_m$ will converge in the inverse limit topology to a limit subset $F_\infty \subset G_\infty$ of infinite cardinality. Since the ultrametric property is stable under such convergence, $F_\infty$ is again ultrametric and hence provides an example of an infinite subset of $G_\infty$ satisfying the $\SRA(0)$ condition.

Let $F_m \subset G_m$ be a subset of cardinality $2^m$ obtained as $F_m = \pi_m^{-1}(q)$ for some fixed abscissa $q \in G_0 = [0,1]$. For example, in the graph $G_1$ we may choose $q = \frac12$ and in the graph $G_2$ we may choose $q = \frac{6}{16}$; see Figure \ref{fig:Laakso}. The branching structure of the Laakso graph induces a canonical enumeration $F_m = \{\bp_w:w \in \{0,1\}^m \}$ by $m$-tuples of binary digits. For $v,w \in \{0,1\}^m$, set $|v\wedge w| = \min\{i:v_i \ne w_i\}$. Then
\begin{equation}\label{eq:d-on-f-m}
d_m(\bp_v,\bp_w) = \frac{c(k)}{4^m} \qquad \mbox{if $k = m-1-|v\wedge w| \in \{0,1,\ldots,m-1\}$,}
\end{equation}
where $c(0) = 2$ and
$$
c(k) = \frac43(4^k-1) \qquad k \ge 1.
$$
For instance, when $m=1$ we have $d_1(\bp_0,\bp_1) = \tfrac12$, when $m=2$ we have
\begin{itemize}
\item $d_2(\bp_{00},\bp_{01}) = d(\bp_{10},\bp_{11}) = \tfrac18$ and 
\item $d_2(\bp_{0j},\bp_{1k}) = \tfrac14$ for all $j,k$,
\end{itemize}
and when $m=3$ we have
\begin{itemize}
\item $d_3(\bp_{w_1w_20},\bp_{w_1w_21}) = \tfrac1{32}$ for all $w_1,w_2$, 
\item $d_3(\bp_{w_1jw_3},\bp_{w1kw_3'}) = \tfrac1{16}$ for all $j,k$ and all $w_1,w_3,w_3'$, and 
\item $d_3(\bp_{0jw_3},\bp_{1kw_3'}) = \tfrac5{16}$ for all $j,k$ and all $w_3,w_3'$.
\end{itemize}
We claim that $(F_m,d_m)$ is an ultrametric space. To see this, observe that \eqref{eq:d-on-f-m} implies that $d_m|_{F_m}$ aligns with the representation of $F_m$ as a rooted binary tree of level $m$. Vertices of this tree correspond to elements of $F_1 \cup \cdots \cup F_{m-1}$ (considered as initial segments of elements in $F_m$), and the distance between points $\bp_v$ and $\bp_w$, $v,w \in \{0,1\}^m$, is $\frac43(4^{m-1-|v\wedge w|}-1) 4^{-m}$. Thus $(F_m,d_m)$ is an ultrametric space.

The sequence $(G_m,d_m)$ Gromov-Hausdorff converges to $(G_\infty,d_\infty)$ \cite[Proposition 2.17]{CK}. Let $F_\infty$ denote a Gromov--Hausdorff limit of the sets $(F_m)$. Since the ultrametric condition passes to Gromov-Hausdorff limits, $(F_\infty,d_\infty)$ is ultrametric. Since $\#F_m \to \infty$, $F$ is an infinite set.
\end{example}

\begin{remark}\label{rem:Laakso-remark}
We can obtain a larger ultrametric subset of the Laakso graph as follows. For each $m\ge 2$, we may find two isometric copies of the set $F_m$ defined above, whose mutual distance is at least as large as the diameter of $F_m$. Thus we may choose $F_m' := \pi_m^{-1}(q) \cup \pi_m^{-1}(q')$. For instance, in the graph $G_2$ we may choose $q = \frac{6}{16}$ and $q' = \frac{10}{16}$; see Figure \ref{fig:Laakso}. Then $F_m'$ is again an ultrametric set. However, no ultrametric subset of $G_\infty$ can contain any three distinct points along any horizontal geodesic joining the endpoints of $G_0$.
\end{remark}

We also recall that all $\SRA(\alpha)$ free spaces are doubling, as was proved by Lebedeva, Ohta, and Zolotov \cite[Theorem 6]{LOZ}.

\begin{theorem}[Lebedeva--Ohta--Zolotov]\label{th:LOZ}
If there exists $\alpha\in(0,1)$ such that $(X,d)$ is $\SRA(\alpha)$ free then $(X,d)$ is doubling.
\end{theorem}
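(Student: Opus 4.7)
The plan is to argue by contrapositive: assuming that $(X,d)$ is not doubling, I will exhibit $\SRA(\alpha)$ subsets of arbitrarily large cardinality, contradicting the $\SRA(\alpha)$ free hypothesis. The point of entry is the familiar separated-set reformulation of the doubling property: $(X,d)$ is doubling if and only if there is a uniform bound on the cardinality of any $r/2$-separated subset of a ball of radius $r$. So if $X$ is not doubling, then for every $M \in \N$ one can locate a radius $r>0$, a center $x \in X$, and points $y_1,\ldots,y_M \in B(x,r)$ satisfying $d(y_i,y_j) \in [r/2,2r]$ for every $i \neq j$.

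The core observation is that a set of points whose pairwise distances lie in a sufficiently narrow annular range automatically satisfies the $\SRA(\alpha)$ condition. Specifically, if $z_1,\ldots,z_N$ have all pairwise distances in some interval $[\rho,(1+\alpha)\rho]$, then for any triple with ordered sides $A \le B \le C$ one has $C - B \le \alpha\rho \le \alpha A$, giving the longest-side inequality $C \le B + \alpha A$. A direct check shows that the other two $\SRA(\alpha)$ inequalities coming from the same triple (obtained by putting the remaining two vertices in the ``middle'' position of the definition) are automatic from the triangle inequality whenever this longest-side inequality holds, so only the longest-side inequality must be verified.

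To locate such a quasi-equidistant subset inside the $r/2$-separated configuration $y_1,\ldots,y_M$, I will invoke the multicolor Ramsey theorem. Partition $[r/2,2r]$ into $K := \lceil 3/\alpha \rceil$ subintervals of length at most $\alpha r/2$, and color each edge of the complete graph on $\{y_1,\ldots,y_M\}$ by the index of the subinterval containing the corresponding distance. Choosing $M \ge R_K(N,\ldots,N)$, the $K$-color Ramsey number, yields a monochromatic $N$-clique whose pairwise distances all lie in a single subinterval $[\rho,\rho+\alpha r/2]$ with $\rho \ge r/2$. Since $\alpha r/2 \le \alpha\rho$, this subinterval sits inside $[\rho,(1+\alpha)\rho]$, so by the previous paragraph the clique provides the desired $\SRA(\alpha)$ subset of cardinality $N$.

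The main obstacle is conceptual rather than technical: one has to recognize that $\SRA(\alpha)$ is a flexible enough condition to be satisfied by any nearly-equidistant configuration, and then identify the correct combinatorial device (multicolor Ramsey) to pass from ``many pairwise separated points in a ball'' to ``many nearly-equidistant points''. The remaining ingredients, namely the separated-set characterization of doubling, the multicolor Ramsey theorem, and the triangle-by-triangle reduction of $\SRA(\alpha)$ to its longest-side instance, are all standard, so once this strategy is in hand the execution is routine.
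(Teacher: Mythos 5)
Your proof is correct. Note that the paper does not prove this theorem at all — it is imported verbatim from \cite[Theorem 6]{LOZ} — so there is no in-paper argument to compare against; your route (the separated-set characterization of doubling, a $K$-color Ramsey extraction with $K \approx 3/\alpha$ of a subset whose pairwise distances all lie in an interval $[\rho,(1+\alpha)\rho]$, and the reduction of the $\SRA(\alpha)$ condition for a triple with sides $A\le B\le C$ to the single longest-side inequality $C\le B+\alpha A$, which such a near-equidistant set satisfies since $C\le(1+\alpha)\rho\le B+\alpha A$) is sound in every step and is, to the best of my knowledge, essentially the argument of the cited source.
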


\begin{remark}
The converse of Theorem \ref{th:LOZ} is not true. In Example \ref{example:metrictree}, if one considers the sequence $t$ for which $t_i=\frac{1}{2^{i-1}}$, then one obtains a doubling metric tree $(T_t,d_T)$ containing an infinite ultrametric set. Thus $(T_t,d_T)$ is not $\SRA(\alpha)$ free for any $\alpha\in(0,1)$. Other examples of doubling spaces which are $\SRA(\alpha)$ full for some $\alpha\in(0,1)$ include the Heisenberg group and the Laakso graph.
\end{remark}

\begin{remark}\label{rem:sra-and-embeddings}
As a counterpoint to Proposition \ref{prop:SRAembeddability} we recall the following theorem of Zolotov, proved in \cite{Z2}.

\begin{theorem}\label{th:Z-embedding}
For each $0<\alpha<1$ and each $\SRA(\alpha)$-free metric space $(X,d)$, there exist constants $N = N(\alpha,X) \in \N$ and $L = L(\alpha,X) \ge 1$ and an $L$-bi-Lipschitz embedding of $(X,d)$ into $\R^N$. Moreover, $N$ and $L$ depend only on the maximal cardinality of an $\SRA(\alpha)$ subset of $X$.
\end{theorem}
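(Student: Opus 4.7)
The plan is to promote Proposition \ref{prop:SRAembeddability} to a fully quantitative statement by tracking the dependence of the doubling constant, the snowflake exponent, and the Assouad embedding parameters on $\alpha$ and on $N_{\max}$, the maximal cardinality of an $\SRA(\alpha)$ subset of $X$. The proof then combines three ingredients already present in the paper: the quantitative doubling bound implicit in Theorem \ref{th:LOZ}, the snowflake reduction from Lemma \ref{lemma:auxUNC} together with Proposition \ref{prop:TW}, and a quantitative form of Assouad's embedding theorem.

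First, I would revisit the proof of Theorem \ref{th:LOZ} of Lebedeva--Ohta--Zolotov and verify that it actually yields a doubling constant $C_D = C_D(\alpha, N_{\max})$. The underlying mechanism is combinatorial: a ball of radius $2r$ containing a sufficiently large $c(\alpha) r$-separated set must, by a pigeonhole-type argument at the scale of the separation, contain an $\SRA(\alpha)$ subset of size exceeding $N_{\max}$, contradicting $\SRA(\alpha)$ freeness. This gives a bound on the doubling (and hence Assouad) dimension of $(X,d)$ depending only on $\alpha$ and $N_{\max}$.

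Second, by Lemma \ref{lemma:auxUNC} the space $(X,d)$ is $\delta(\alpha)$-uniformly non-convex, and Proposition \ref{prop:TW} furnishes a metric $d''$ on $X$ such that $d$ is bi-Lipschitz equivalent to $(d'')^{1/q(\alpha)}$, where $q(\alpha)$ is given by \eqref{eq:q}; the bi-Lipschitz constants depend only on $\alpha$. Since bi-Lipschitz equivalence changes the Assouad dimension by a bounded factor and snowflaking by exponent $1/q(\alpha)$ multiplies it by $q(\alpha)$, the Assouad dimension of $(X,d'')$ is controlled by a function of $\alpha$ and $N_{\max}$.

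Finally, I would apply a quantitative Assouad embedding theorem (such as the one of Naor--Neiman \cite{NN}) to the $1/q(\alpha)$-snowflake $(X, (d'')^{1/q(\alpha)})$. Since the snowflake parameter $1/q(\alpha) \in (0,1)$ and the source Assouad dimension are both controlled by $\alpha$ and $N_{\max}$, this yields a bi-Lipschitz embedding into $\R^N$ with $N = N(\alpha, N_{\max})$ and distortion bounded by $L = L(\alpha, N_{\max})$. Composing with the bi-Lipschitz identification of $(X,d)$ with $(X, (d'')^{1/q(\alpha)})$ completes the argument. The main obstacle is the first step: making the Lebedeva--Ohta--Zolotov argument fully quantitative so that $C_D$ admits an explicit bound in terms of $\alpha$ and $N_{\max}$, rather than merely being finite. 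Once this extraction is carried out, the remaining steps are quantitative by construction, and the dependence on $X$ enters only through $N_{\max}$, as required.
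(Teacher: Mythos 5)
There is a fatal gap in your second step. Lemma \ref{lemma:auxUNC} and Proposition \ref{prop:TW} apply to metric spaces that \emph{satisfy} the $\SRA(\alpha)$ condition, i.e., spaces in which \emph{every} triple of points obeys the inequality \eqref{SRA}. The hypothesis of Theorem \ref{th:Z-embedding} is that $(X,d)$ is $\SRA(\alpha)$-\emph{free}, which is essentially the opposite condition: it says that no subset of cardinality exceeding $N_{\max}$ satisfies the $\SRA(\alpha)$ condition. The paper stresses exactly this dichotomy in Remark \ref{rem:sra-and-embeddings} (``these two conditions stand at opposite extremes''). Consequently you cannot invoke uniform non-convexity or the Tyson--Wu snowflake characterization for an $\SRA(\alpha)$-free space, and Proposition \ref{prop:SRAembeddability} is about the wrong class of spaces to serve as the template. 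If your argument were correct it would show that every $\SRA(\alpha)$-free space is a $q$-snowflake for some $q>1$; but $\R^n$ is $\SRA(\alpha)$-free for every $\alpha<1$ (Corollary \ref{EFcor}) and is manifestly not a snowflake, since snowflake metric spaces contain no nonconstant rectifiable curves. Your first step (extracting a quantitative doubling constant from Theorem \ref{th:LOZ}) is sound in spirit, but doubling alone does not give Euclidean bi-Lipschitz embeddability --- the paper itself recalls the Heisenberg counterexample --- so the argument cannot be repaired by dropping the snowflake step.

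Note also that the paper does not prove Theorem \ref{th:Z-embedding} at all; it is quoted as a result of Zolotov from \cite{Z2}, whose proof proceeds by a direct construction adapted to $\SRA(\alpha)$-free spaces rather than through the Assouad embedding theorem. So there is no route here of ``promoting'' an existing proposition in this paper to obtain the statement; a genuinely different mechanism is required.
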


Taken together, Proposition \ref{prop:SRAembeddability} and Theorem \ref{th:Z-embedding} imply that if $(X,d)$ is either $\SRA(\alpha)$-free or satisfies the $\SRA(\alpha)$ condition for some $\alpha<1$, then $X$ embeds bi-Lipschitzly into some finite-dimensional Euclidean space. While these two conditions stand at opposite extremes (one hypothesis asserts that all triplets in $X$ satisfy the $\SRA(\alpha)$ condition, while the other hypothesis states that there is a fixed upper bound on the cardinality of any subset of $X$ with the $\SRA(\alpha)$ condition), we do not see a path here to resolve the longstanding open problem of characterizing those metric spaces which admit finite-dimensional Euclidean bi-Lipschitz embeddings. For example, the metric trees considered in Example \ref{example:metrictree} are all $\SRA(0)$-full and do not fall into either of the previous two categories ($\SRA(\alpha)$-free or satisfying the $\SRA(\alpha)$ condition for some $\alpha<1$). However, Proposition \ref{prop:sequence-embeddings} shows that bi-Lipschitz embeddability and the doubling property depend on the rate of convergence of the defining sequence $(t_i)$.
\end{remark}

\subsection{Subsets of Euclidean snowflakes}

Lemma \ref{lem:bestSRA} asserts that if $(X,d^\alpha)$ is a snowflaked metric space, then $(X,d^\alpha)$ satisfies the $\SRA(2^\alpha-1)$ condition. In general, a space satisfying the $\SRA(\alpha)$ condition may not satisfy the $\SRA(\eps)$ condition for any choice of $\eps<\alpha$. The following theorem and its corollaries show that, in certain cases, we can ensure the existence of large (even infinite, or even of positive Hausdorff dimension) subsets of snowflaked metric spaces which verify the $\SRA(\eps)$ condition for smaller choices of $\eps$. 

In what follows, $d_E$ denotes the Euclidean metric.

\begin{theorem}\label{th:SRA-subsets-of-snowflake-spaces}
Fix $0<\eps<\alpha<1$. Then there exists a sequence $A := \{a_m:m \ge 1\}$ of real numbers so that $(A,d_E^\alpha)$ satisfies the $\SRA(\eps)$ condition.
\end{theorem}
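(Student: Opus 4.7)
The plan is to take a geometric sequence $a_m := r^m$, $m \geq 1$, for a sufficiently large ratio $r = r(\alpha,\eps) > 1$, and to verify the $\SRA(\eps)$ condition directly on every triple. Since any three points in $\R$ lie on a line, there is very little freedom in the geometry: the only parameter one can manipulate is the ratio of the two consecutive gaps, and a rapidly lacunary sequence forces every triple to be extremely unbalanced.

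Fix three indices $m_1 < m_2 < m_3$ and set $E_1 := a_{m_2} - a_{m_1}$ and $E_2 := a_{m_3} - a_{m_2}$. The three pairwise $d_E^\alpha$-distances are $E_1^\alpha$, $E_2^\alpha$, and $(E_1+E_2)^\alpha$, and the last is the largest since $x\mapsto x^\alpha$ is strictly increasing. For a three-point set the $\SRA(\eps)$ condition \eqref{SRA} (with the maximum distance placed on the left-hand side) is equivalent to the single inequality
$$
D_{\max} \leq D_{\mathrm{mid}} + \eps\, D_{\min}.
$$
Writing $E := \min\{E_1,E_2\}$ and $F := \max\{E_1,E_2\}$ and setting $t := E/F \in (0,1]$, this reduces to
$$
(1+t)^\alpha - 1 \leq \eps\, t^\alpha.
$$

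The key estimate is the concavity bound $(1+t)^\alpha - 1 \leq \alpha t$, valid for $t\geq 0$ and $0<\alpha<1$. So it suffices to impose $\alpha t \leq \eps t^\alpha$, i.e.
$$
t \leq t_0 := \left(\frac{\eps}{\alpha}\right)^{1/(1-\alpha)},
$$
and since $\eps<\alpha$ and $1-\alpha>0$, the threshold $t_0 \in (0,1)$ is a strictly positive constant depending only on $\alpha$ and $\eps$. A direct computation with $k_j := m_{j+1}-m_j \geq 1$ gives
$$
\frac{E_1}{E_2} \;=\; \frac{r^{k_1}-1}{r^{k_1}(r^{k_2}-1)} \;\leq\; \frac{1}{r^{k_2}-1} \;\leq\; \frac{1}{r-1},
$$
so for $r\geq 2$ one has $E = E_1$, $F = E_2$, and $t \leq 1/(r-1)$. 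Choosing $r$ large enough that $1/(r-1)\leq t_0$ makes the inequality hold uniformly over all triples and completes the proof.

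I do not foresee any serious technical obstacle; the content is recognizing that, although a general snowflake of parameter $\alpha$ only yields the $\SRA(2^\alpha-1)$ bound (Lemma \ref{lem:bestSRA}, sharp on an arithmetic progression as in Example \ref{ex:arithmetic}), an aggressively lacunary progression drives the relevant ratio $t$ to zero, which in turn drives the effective $\SRA$ constant arbitrarily far below $2^\alpha-1$. The same strategy should then extend, by choosing the points on a nontrivial geodesic in a general metric space $(X,d)$ that contains one, to yield Theorem \ref{thm:main-cor-2}.
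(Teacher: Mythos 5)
Your proposal is correct and follows essentially the same route as the paper: both reduce the $\SRA(\eps)$ condition for collinear triples to the single inequality $(1+t)^\alpha\le 1+\eps t^\alpha$ for the gap ratio $t$, and both force $t$ below the admissible threshold by taking a geometric sequence. The only difference is cosmetic: the paper pins down the optimal ratio as the root of $g(x)=(1-x)^\alpha+\eps x^\alpha-1$ via a critical-point analysis, whereas you settle for any sufficiently lacunary ratio using the concavity bound $(1+t)^\alpha\le 1+\alpha t$, which is a cleaner verification of the same estimate.
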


In fact, $(a_m)$ is a geometric sequence: $a_m = (a_1)^m$ for all $m$. It follows that the set $A$ has Assouad dimension zero, and hence also has box-counting and Hausdorff dimension equal to zero.

\begin{proof}
Given $\eps<\alpha$ as in the statement of the theorem, set
\begin{equation}\label{eq:g}
g(x) := (1-x)^\alpha + \eps x^ \alpha - 1, \qquad 0 \le x \le 1.
\end{equation}
Then $g(0)=0$, $g(1)<0$, $g$ has a unique extremum in the interval $(0,1)$ at
$$
x_0: = \frac{\eps^{1/(1-\alpha)}}{1+\eps^{1/(1-\alpha)}},
$$
and 
$$
g''(x_0) = -\alpha(1-\alpha)\eps^{-1/(1-\alpha)} \biggl( 1 + \eps^{1/(1-\alpha)} \biggr)^{3-\alpha} < 0.
$$
Hence $g(x)$ has a unique maximum in the interval $(0,1)$ at $x=x_0$, and it follows that $g(x)$ has a unique root $a = a(\eps,\alpha)$ in the interval $(0,1)$. 

Set $a_1 := a$ and $a_m := a^m$ for all $m \ge 2$. It suffices for us to prove that
\begin{equation}\label{eq:SRA-to-check-0}
(a_i-a_m)^\alpha \le (a_i-a_j)^\alpha + \eps (a_j-a_m)^\alpha, \qquad \forall \, 0 < i < j < m.
\end{equation}
Since $(a_j)$ is a geometric sequence, the case $i>0$ is easily reduced to the case $i=0$ via scaling. Consequently, it is enough for us to verify that
\begin{equation}\label{eq:SRA-to-check}
(1-a_m)^\alpha \le (1-a_j)^\alpha + \eps (a_j-a_m)^\alpha, \qquad \forall \, 1\le j < m.
\end{equation}
For $j \ge 1$ define
$$
f_j(x) := (1-a_j)^\alpha + \eps (a_j-x)^\alpha - (1-x)^\alpha, \qquad 0 \le x \le a_j.
$$
Then $f_j(a_j) = 0$, $f_j(0) = g(a_j)  \ge 0$, and $f_j$ has a unique extremum at
$$
y_j := \frac{a_j - \eps^{1/(1-\alpha)}}{1-\eps^{1/(1-\alpha)}}.
$$
Clearly $y_j < a_j$ for all $j$, however, we have $y_j>0$ if and only if $a_j > \eps^{1/(1-\alpha)}$ i.e.
\begin{equation}\label{eq:j}
j < \frac{1}{1-\alpha} \frac{\log(1/\eps)}{\log(1/a)}.
\end{equation}
If \eqref{eq:j} holds then
$$
f''(y_j) = - \alpha(1-\alpha) \eps^{-1/(1-\alpha)} \frac{(1-\eps^{1/(1-\alpha)})^{3-\alpha}}{(1-a_j)^{2-\alpha}} < 0
$$
and so $f_j(x)$ has a unique maximum in the interval $(0,a_j)$ at $x=y_j$. If \eqref{eq:j} does not hold then $f_j$ is monotonic in $(0,a_j)$ with a maximum at $x=0$. Regardless of whether or not \eqref{eq:j} holds, we conclude that $f_j(x)>0$ for all $0<x<a_j$, in particular, $f_j(a_m)>0$ for all $m \ge j$. This establishes \eqref{eq:SRA-to-check}, and the other two statements in the definition of the $\SRA(\eps)$ condition are trivial. We therefore conclude that $A = \{a_m:m \ge 1\}$ is an $\SRA(\eps)$ set.
\end{proof}

Now, we present some immediate corollaries of Theorem \ref{th:SRA-subsets-of-snowflake-spaces}. Corollary \ref{cor:main-cor-2} restates Theorem \ref{thm:main-cor-2} from the introduction.

\begin{corollary}
Let $0<\alpha<1$. Then $(\R,d_E^\alpha)$ is $\SRA(\eps)$ full for each $0<\eps<\alpha$.
\end{corollary}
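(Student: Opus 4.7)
The plan is to invoke Theorem \ref{th:SRA-subsets-of-snowflake-spaces} directly. Fix $0<\eps<\alpha<1$. The theorem produces a geometric sequence $A=\{a_m:m\ge 1\}\subset\R$ with $a_m=a^m$ for some $a=a(\eps,\alpha)\in(0,1)$, with the property that $(A,d_E^\alpha)$ satisfies the $\SRA(\eps)$ condition.

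The sequence $A$ is infinite (its elements are distinct since $0<a<1$), and it is a subset of $\R$. Hence $A$ is an infinite subset of $(\R,d_E^\alpha)$ satisfying the $\SRA(\eps)$ condition. By Definition \ref{definition:SRAfree}, this is exactly what it means for $(\R,d_E^\alpha)$ to be $\SRA(\eps)$ full.

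There is no real obstacle here: the corollary is an immediate translation of Theorem \ref{th:SRA-subsets-of-snowflake-spaces} into the language of $\SRA(\eps)$ fullness. The only point worth noting is that Theorem \ref{th:SRA-subsets-of-snowflake-spaces} applies for every $0<\eps<\alpha$ (not just for some restricted range), so the fullness conclusion automatically holds for the entire range $0<\eps<\alpha$ as stated.
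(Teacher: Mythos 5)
Your proposal is correct and matches the paper exactly: the paper presents this corollary as an immediate consequence of Theorem \ref{th:SRA-subsets-of-snowflake-spaces}, with no further argument needed beyond observing that the geometric sequence $A$ is an infinite $\SRA(\eps)$ subset of $(\R,d_E^\alpha)$. Nothing is missing.
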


\begin{remark}
Observe that the conclusion fails dramatically when $\eps=0$: the largest ultrametric set in $(\R,d_E^\alpha)$ for $0<\alpha<1$ contains at most two points. To see this, let $x,y,z\in\R$ with $x<y<z$, and assume without loss of generality that $x=0$. If $\{x,y,z\}$ is ultrametric, then we must have
\[
|z|^{\alpha}\leq \max\{|y|^{\alpha},|z-y|^{\alpha}\},
\]
but this contradicts the fact that $f(x)=x^{\alpha}$ is an increasing function.
\end{remark}

\begin{corollary}\label{cor:main-cor-2}
Let $0<\alpha<1$ and let $(X,d)$ be a metric space containing a nontrivial geodesic. Then $(X,d^\alpha)$ is $\SRA(\eps)$ full for each $0<\eps<\alpha$.
\end{corollary}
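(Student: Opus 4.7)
The plan is to reduce the corollary directly to Theorem \ref{th:SRA-subsets-of-snowflake-spaces} by transporting the $\SRA(\eps)$ subset of $(\R, d_E^\alpha)$ produced there along a geodesic in $X$. Concretely, since $(X,d)$ contains a nontrivial geodesic, there exist $L > 0$ and a map $\gamma \colon [0,L] \to X$ which is an isometric embedding, i.e., $d(\gamma(s), \gamma(t)) = |s-t|$ for all $s,t \in [0,L]$.

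Next, I would apply Theorem \ref{th:SRA-subsets-of-snowflake-spaces} (with the given $0 < \eps < \alpha$) to obtain an infinite sequence $A = \{a_m : m \ge 1\} \subset (0,1)$ such that $(A, d_E^\alpha)$ satisfies the $\SRA(\eps)$ condition. Define $B := \{\gamma(L a_m) : m \ge 1\} \subset X$. Since $\gamma$ is injective (being an isometric embedding) and the $a_m$ are distinct, $B$ is an infinite subset.

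The final step is to check that $(B, d^\alpha)$ satisfies the $\SRA(\eps)$ condition. For any $m,m' \ge 1$,
\[
d(\gamma(La_m), \gamma(La_{m'}))^\alpha = (L |a_m - a_{m'}|)^\alpha = L^\alpha \, |a_m - a_{m'}|^\alpha,
\]
so every $d^\alpha$-distance between points of $B$ is exactly $L^\alpha$ times the corresponding $d_E^\alpha$-distance between points of $A$. Because inequality \eqref{SRA} is homogeneous of degree one in the metric, multiplying each distance by the common factor $L^\alpha$ preserves its validity. Hence $(B, d^\alpha)$ inherits the $\SRA(\eps)$ condition from $(A, d_E^\alpha)$, which shows that $(X, d^\alpha)$ is $\SRA(\eps)$ full.

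There is really no serious obstacle here: once Theorem \ref{th:SRA-subsets-of-snowflake-spaces} is available, the only observation needed is the scale invariance of the $\SRA$ condition together with the fact that an isometric embedding of an interval into $X$ becomes, after snowflaking by $\alpha$, a similarity embedding of $([0,L], d_E^\alpha)$ into $(X, d^\alpha)$. All of the nontrivial work lies upstream in constructing the geometric sequence used in the proof of Theorem \ref{th:SRA-subsets-of-snowflake-spaces}.
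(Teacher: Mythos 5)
Your proof is correct and matches the paper's intent: the paper presents this corollary as an immediate consequence of Theorem \ref{th:SRA-subsets-of-snowflake-spaces}, with exactly the argument you spell out (transport the $\SRA(\eps)$ sequence along the geodesic and use the scale invariance of condition \eqref{SRA}). No issues.
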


We now improve Theorem \ref{th:SRA-subsets-of-snowflake-spaces}; instead of finding a sequence with the $\SRA(\eps)$ condition we find a subset of positive Hausdorff dimension with that property. For given $0<\eps<\alpha<1$, let $a = a(\eps,\alpha)$ be the unique root in $(0,1)$ of the function $g(x)$ in \eqref{eq:g}, as discussed in the proof of that theorem. If $a<\tfrac12$, let $C_a$ denote the self-similar Cantor set in $[0,1]$ obtained as the invariant set for the iterated function system $f_1(x) = ax$, $f_2(x) = 1-a+ax$. Note that the sequence $A$ defined in the proof of Theorem \ref{th:SRA-subsets-of-snowflake-spaces} is contained in $C_a$. Moreover, $C_a$ has Hausdorff dimension $\log(2)/\log(1/a)$.

\begin{proposition}\label{prop:cantor-set-subsets}
Let $0<\alpha<1$ and choose $\eps$ satisfying 
$$
0<\eps < 2^\alpha-1 < \alpha.
$$
Let $a=a(\eps,\alpha)<\tfrac12$ be defined as above, and let $C_a$ be the corresponding self-similar Cantor set. Then $(C_a,d_E^\alpha)$ satisfies the $\SRA(\eps')$ condition for
\begin{equation}\label{eq:eps-prime}
\eps' = \frac{(1-a)^\alpha - (1-2a)^\alpha}{a^\alpha} \, .
\end{equation}
Moreover, $\eps < \eps' < \alpha$ and $\eps'\to 0$ as $\eps\to 0$ for fixed $\alpha>0$.
\end{proposition}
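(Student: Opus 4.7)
The plan is to exploit the self-similar structure of $C_a$ and reduce the verification of $\SRA(\eps')$ to a base case. Since $a<\tfrac12$, the two first-generation images $f_1(C_a)\subset[0,a]$ and $f_2(C_a)\subset[1-a,1]$ are disjoint with $C_a = f_1(C_a)\cup f_2(C_a)$. For any composition $F = f_{i_1}\circ\cdots\circ f_{i_N}$, the inverse $F^{-1}|_{F(C_a)}$ is a similarity of ratio $a^{-N}$, so it multiplies $d_E^\alpha$-distances by the constant $a^{-N\alpha}$ and preserves the $\SRA(\eps')$ inequality. Given distinct points $x<y<z$ in $C_a$, I take $N\geq 0$ maximal so that the three points lie in a common $N$-th generation image of $C_a$ and apply the corresponding $F^{-1}$; the task reduces to verifying $\SRA(\eps')$ for triples in $C_a$ that are not simultaneously contained in $f_1(C_a)$ nor in $f_2(C_a)$.

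For such a base-case triple there are two configurations, namely $x\in f_1(C_a)$ with $y,z\in f_2(C_a)$, or $x,y\in f_1(C_a)$ with $z\in f_2(C_a)$. The isometric reflection $\iota(t)=1-t$ preserves $C_a$ (since $\iota\circ f_i = f_{3-i}\circ\iota$) and swaps the two configurations, so I may assume the first. Write $A = y-x$, $B = z-y$, $C = z-x = A+B$; then $x\in[0,a]$ and $y,z\in[1-a,1]$ yield $A\in[1-2a,1]$ and $B\in(0,a]$. The $\SRA(\eps')$ inequality $C^\alpha\leq\max(A,B)^\alpha+\eps'\min(A,B)^\alpha$, after dividing by $\max(A,B)^\alpha$, becomes $\phi(r)\leq \eps'$ where $r = \min(A,B)/\max(A,B)\in(0,1]$ and
\[
\phi(r) = \frac{(1+r)^\alpha - 1}{r^\alpha}.
\]
A direct derivative computation gives $\phi'(r) = \alpha(1-(1+r)^{\alpha-1})/r^{\alpha+1}>0$, so $\phi$ is strictly increasing on $(0,1]$, and it suffices to bound $r$ from above.

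If $a\leq\tfrac13$ then $A\geq 1-2a\geq a\geq B$, so $r = B/A \leq a/(1-2a)$, and $\phi(a/(1-2a))$ simplifies to $((1-a)^\alpha - (1-2a)^\alpha)/a^\alpha = \eps'$. If $a\in(\tfrac13,\tfrac12)$, however, the intervals $[1-2a,1]$ and $(0,a]$ overlap and $r$ can approach $1$; the required bound becomes $\eps'\geq \phi(1) = 2^\alpha - 1$, equivalently $(1-a)^\alpha + a^\alpha \geq (1-2a)^\alpha + (2a)^\alpha$. This holds because $u\mapsto u^\alpha + (1-u)^\alpha$ is concave on $[0,1]$ with maximum at $u=\tfrac12$, and for $a\in(\tfrac13,\tfrac12)$ the value $a$ lies closer to $\tfrac12$ than does $2a$. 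This subcase $a>\tfrac13$ is the main obstacle: the geometric separation that forced $r<1$ in the previous case is absent, and one must invoke concavity of the snowflaking exponent to close the gap. It is also precisely why the formula for $\eps'$ meets $2^\alpha-1$ at the transition $a=\tfrac13$.

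For the supplementary claims, using the defining relation $\eps = (1-(1-a)^\alpha)/a^\alpha$ coming from $g(a)=0$, I compute $\eps' - \eps = (2(1-a)^\alpha - 1 - (1-2a)^\alpha)/a^\alpha$, which is positive by strict concavity of $t\mapsto t^\alpha$ applied to the pair $\{1,1-2a\}$ with midpoint $1-a$; thus $\eps<\eps'$. As $\eps\to 0$ the root $a$ of $g$ also tends to $0$, and a Taylor expansion of $(1-a)^\alpha$ and $(1-2a)^\alpha$ at $a=0$ yields $\eps'\sim\alpha a^{1-\alpha}\to 0$; the bound $\eps'<\alpha$ likewise follows from this asymptotic for $a$ (and hence $\eps$) sufficiently small.
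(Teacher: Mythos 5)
Your proof of the main assertion is correct and takes a genuinely different route from the paper's. After the same self-similarity reduction to triples split between $C_a\cap[0,a]$ and $C_a\cap[1-a,1]$, the paper proves the stronger one-sided claim $|x-z|^\alpha\le \eps'|x-y|^\alpha+|y-z|^\alpha$ (for $y$ in the left piece) by maximizing the two-variable function $G(x,y)=((1-a-x)^\alpha-(1-a-y)^\alpha)/(y-x)^\alpha$ over the triangle $\{0\le x<y\le a\}$ via a gradient and boundary analysis; the maximum $G(0,a)=\eps'$ comes out uniformly in $a$. You instead verify only the max-form of the $\SRA$ inequality, which collapses to the one-variable condition $\phi(r)\le\eps'$ with $r=\min(A,B)/\max(A,B)$, and exploit monotonicity of $\phi$. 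This is more elementary, at the price of the case split at $a=\tfrac13$: for $a>\tfrac13$ you must separately check $\eps'\ge\phi(1)=2^\alpha-1$, which you do correctly via concavity and symmetry of $u\mapsto u^\alpha+(1-u)^\alpha$. Both arguments are sound, and your midpoint-concavity proof of $\eps<\eps'$ is correct (indeed more explicit than what the paper offers for that inequality, which is only the limit $\eps'/\eps\to1$).

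The one genuine gap is the claim $\eps'<\alpha$: you establish it only ``for $a$ sufficiently small,'' whereas the proposition asserts it for every admissible $\eps\in(0,2^\alpha-1)$. Be aware, however, that the full claim appears to be false: as $a\to\tfrac12^-$ one has $\eps'=((1-a)^\alpha-(1-2a)^\alpha)/a^\alpha\to 1>\alpha$, and already $\alpha=0.9$, $a=0.4$ (corresponding to an admissible $\eps\approx0.841<2^{0.9}-1\approx0.866$) gives $\eps'\approx0.905>\alpha$. The paper's own justification applies concavity in the wrong direction: concavity of $t\mapsto t^\alpha$ gives $(1-a)^\alpha-(1-2a)^\alpha\ge\alpha a(1-a)^{\alpha-1}$, i.e.\ the \emph{lower} bound $\eps'\ge\alpha a^{1-\alpha}/(1-a)^{1-\alpha}$, not an upper bound. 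So your restriction to small $a$ is not an omission you could have repaired; the inequality $\eps'<\alpha$ should be weakened (e.g.\ to hold only for $\eps$ below some threshold depending on $\alpha$) or dropped, and you should say so rather than gesture at an asymptotic.
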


Observe from \eqref{eq:g} that $\eps$ and $a$ are related by the equation
\begin{equation}\label{eq:eps}
\eps = \frac{1-(1-a)^\alpha}{a^\alpha}.
\end{equation}

\begin{proof}
It suffices to prove that for any $x,y,z \in C_a$ with $x<y<z$, the inequality
$$
|x-z|^\alpha \le \max \{ |x-y|^\alpha + \eps' |y-z|^\alpha , \eps' |x-y|^\alpha + |y-z|^\alpha \}
$$
holds true.

Denote by $C_a^L := C_a \cap [0,a]$ and $C_a^R := C_a \cap [1-a,1]$ the two self-similar pieces of $C_a$. By scaling invariance, we can assume that all three points $x,y,z$ are not contained entirely in $C_a^L$, nor are all three points contained in $C_a^R$. In view of the ordering of these points, this means that we must have $x \in C_a^L$ and $z \in C_a^R$. We will show the following two claims:
\begin{equation}\label{eq:claim1}
y \in C_a^L \qquad \Rightarrow \qquad |x-z|^\alpha \le \eps' |x-y|^\alpha + |y-z|^\alpha
\end{equation}
and
\begin{equation}\label{eq:claim2}
y \in C_a^R \qquad \Rightarrow \qquad |x-z|^\alpha \le |x-y|^\alpha + \eps' |y-z|^\alpha \, .
\end{equation}
In fact, \eqref{eq:claim2} follows by symmetry once we have established \eqref{eq:claim1}.

We now turn to the proof of \eqref{eq:claim1}. Our assumptions guarantee that
$$
0 \le x < y \le a \qquad \mbox{and} \qquad 1-a \le z \le 1
$$
and our goal is to prove that
\begin{equation}\label{eq:xyz}
\frac{(z-x)^\alpha - (z-y)^\alpha}{(y-x)^\alpha} \le \eps'.
\end{equation}
Fixing $x$ and $y$, we first observe that the function $z \mapsto ((z-x)^\alpha-(z-y)^\alpha)/(y-x)^\alpha$ is decreasing on $[1-a,1]$. It thus suffices for us to prove that
\begin{equation}\label{eq:xy}
G(x,y) := \frac{(1-a-x)^\alpha - (1-a-y)^\alpha}{(y-x)^\alpha} \le \eps' \qquad \forall \, 0 \le x < y \le a.
\end{equation}
An elementary computation gives
$$
\frac{\partial G}{\partial x} = \frac{\alpha(1-a-y)}{(y-x)^{1+\alpha}} \left( \frac1{(1-a-x)^{1-\alpha}} - \frac1{(1-a-y)^{1-\alpha}} \right)
$$
and
$$
\frac{\partial G}{\partial y} = \frac{\alpha(1-a-x)}{(y-x)^{1+\alpha}} \left( \frac1{(1-a-y)^{1-\alpha}} - \frac1{(1-a-x)^{1-\alpha}} \right) \, .
$$
Hence $\nabla G(x,y) \ne 0$ for any $(x,y) \in T := \{(x,y):0\le x < y \le a\}$ and $G$ is maximized on $\partial T$. Furthermore, $(\partial G/\partial x)(x,a)<0$ and $(\partial G/\partial y)(0,y)>0$ for all $0\le x<y\le a$ and $G(t-\delta,t+\delta)\to 0$ as $\delta\to 0$ for all $0<t<a$. It follows that
$$
\max_T G = \max_{\partial T} G = G(0,a) = \frac{(1-a)^\alpha - (1-2a)^\alpha}{a^\alpha} = \eps'.
$$
It follows from \eqref{eq:eps-prime}, the concavity of $t \mapsto t^\alpha$, and the bound $a \le \tfrac12$, that
$$
\eps' < \frac{\alpha a^{1-\alpha}}{(1-a)^{1-\alpha}} \le \alpha.
$$
Moreover, from \eqref{eq:eps-prime} and \eqref{eq:eps} we find that
$$
\lim_{a\to 0} \frac{\eps'(a)}{\eps(a)} = \lim_{a\to 0} \frac{(1-a)^\alpha-(1-2a)^\alpha}{1-(1-a)^\alpha} = 1
$$
and hence $\eps'\to 0$ as $\eps\to 0$.
\end{proof}

It is interesting to observe that all of the examples of subsets used to illustrate the $\SRA(\eps)$ fullness of $(\R,d_E^\alpha)$ as $\eps\to 0$ involve geometrically convergent constructions (e.g.\ geometric sequences or self-similar Cantor sets). In contrast, Example \ref{ex:arithmetic}, which highlights the sharpness of Lemma \ref{lem:bestSRA}, involves an arithmetic sequence. The distinction between arithmetic and geometric structure can also be witnessed in the following result.

\begin{proposition}\label{prop:positive-density}
Let $A \subset \R$ have positive Lebesgue density, and let $0<\alpha<1$. Then $(A,d_E^\alpha)$ does not satisfy the $\SRA(\eps)$ condition for any $0<\eps<2^\alpha-1$.
\end{proposition}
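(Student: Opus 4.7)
The plan is to exhibit a triple of points in $A$ close enough to an arithmetic progression that the $\SRA(\eps)$ inequality fails in the snowflaked metric $d_E^\alpha$. Recall from Example \ref{ex:arithmetic} that three-point arithmetic progressions realize equality in Lemma \ref{lem:bestSRA}; since the hypothesis gives $\eps < 2^\alpha-1$ strictly, this equality will become strict inequality in the wrong direction under small perturbations, and the role of the density hypothesis on $A$ is simply to produce perturbed arithmetic progressions inside $A$ at small scales.

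First I will carry out the continuity reduction. For $a,b>0$ set
\[
F(a,b) := \frac{(a+b)^\alpha - \max\{a,b\}^\alpha}{\min\{a,b\}^\alpha},
\]
so that $F(1,1) = 2^\alpha-1$. By continuity of $F$ at $(1,1)$ and the hypothesis $\eps<2^\alpha-1$, there is $\eta>0$ such that $F(a,b) > \eps$ for every $a,b \in [1-\eta,1+\eta]$. Translating back: whenever $x<y<z$ in $\R$ satisfy $y-x=ra$ and $z-y=rb$ with $r>0$ and $a,b\in[1-\eta,1+\eta]$,
\[
(z-x)^\alpha \;>\; \max\bigl\{(y-x)^\alpha + \eps (z-y)^\alpha,\; \eps (y-x)^\alpha + (z-y)^\alpha\bigr\},
\]
which is exactly the failure of the $\SRA(\eps)$ condition in $(\R,d_E^\alpha)$ at the triple $\{x,y,z\}$.

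It remains to locate such a perturbed arithmetic progression inside $A$. Interpreting the hypothesis via the Lebesgue density theorem, choose a point $x_0\in A$ with
\[
\lim_{r\to 0^+} \frac{|A\cap[x_0-r,x_0+r]|}{2r} = 1.
\]
Fix $\delta \in (0, \eta/4)$. For all sufficiently small $r>0$, the set $[x_0-r,x_0+r]\setminus A$ has Lebesgue measure strictly less than $2\delta r$. The three disjoint subintervals
\[
I_- := [x_0-r,\, x_0-(1-3\delta)r],\quad I_0 := [x_0-\delta r,\, x_0+\delta r],\quad I_+ := [x_0+(1-3\delta)r,\, x_0+r]
\]
each have length at least $2\delta r$, so each must meet $A$; pick $x\in A\cap I_-$, $y\in A\cap I_0$, $z\in A\cap I_+$.

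A direct check gives $y-x,\; z-y \in [(1-4\delta)r,(1+\delta)r]$. Writing $y-x=ra$ and $z-y=rb$, we obtain $a,b \in [1-4\delta,1+\delta]\subseteq[1-\eta,1+\eta]$, so the continuity step yields the desired failure of $\SRA(\eps)$ at the triple $\{x,y,z\}\subset A$. The main bit of care needed is lining up the two quantitative constants: the tolerance $\eta$ around $(1,1)$ coming from $2^\alpha-1-\eps>0$ must be large enough that the density deficit $\delta$ can fit inside it, but this is a cheap matching of two continuity arguments rather than a substantive obstacle.
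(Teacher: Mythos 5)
Your proposal is correct and follows essentially the same route as the paper: use the Lebesgue density theorem to extract a nearly arithmetic three-term progression $x<y<z$ in $A$ at a small scale, then exploit the fact that exact arithmetic progressions realize equality in the $2^\alpha-1$ bound, so that $\eps<2^\alpha-1$ forces failure of the $\SRA(\eps)$ inequality for the perturbed triple. The only cosmetic difference is that you fix the continuity tolerance $\eta$ up front and need a single good scale $r$, whereas the paper runs a sequence $r_m\to 0$ and reaches the contradiction $\eps\ge 2^\alpha-1$ in the limit; the substance is identical.
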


\begin{proof}
Let $a_0$ be a point of Lebesgue density for $A$, and choose a sequence $r_m \to 0$, $m \ge 2$, so that 
\begin{equation}\label{eq:A-upper-bound}
|A \cap (a_0-r_m,a_0+r_m)| > (1-\tfrac1m) (2r_m).
\end{equation}
Here we denote by $|E|$ the Lebesgue measure of a set $E \subset \R$. Set $\eps_m := r_m/m$. For each $m$, we claim that there exist points $b_m,c_m\in A$ with
\begin{equation}\label{eq:b}
b_m \in (a_0-\tfrac12 r_m - \eps_m,a_0 - \tfrac12 r_m + \eps_m)
\end{equation}
and
\begin{equation}\label{eq:c}
c_m \in (a_0 + \tfrac12 r_m - \eps_m,a_0 + \tfrac12 r_m + \eps_m).
\end{equation}
Indeed, if $A \cap (a_0-\tfrac12 r_m - \eps_m,a_0 - \tfrac12 r_m + \eps_m)$ is empty, then
\begin{equation}\label{eq:A-lower-bound}
|A \cap (a_0-r_m,a_0+r_m)| < 2r_m-2\eps_m
\end{equation}
which contradicts \eqref{eq:A-upper-bound}. Hence there exists $b_m$ as in \eqref{eq:b}, and a similar argument assures the existence of $c_m$ as in \eqref{eq:c}. For each $m$, the triple of points $b_m<a_m<c_m$ in $A$ satisfies
$$
|a_m-b_m| < \tfrac12 r_m + \tfrac1m r_m,
$$
$$
|a_m-c_m| < \tfrac12 r_m + \tfrac1m r_m,
$$
and
$$
|b_m-c_m| \ge (1-\tfrac2m) r_m.
$$
We see that the triple $(b_m,a_m,c_m)$ is {\em asymptotically arithmetic}, i.e., the arithmetic identity $a=(b+c)/2$ is approached in the limit as $m \to \infty$.

Suppose that $(A,d_E^\alpha)$ satisfies the $\SRA(\eps)$ condition. We will show that
\begin{equation}\label{eq:eps-ge-2-alpha-1}
\eps \ge 2^\alpha - 1.
\end{equation}
Applying the $\SRA(\eps)$ condition for the triple $b_m,a_m,c_m$ gives
$$
|b_m-c_m|^\alpha \le \max \{ |a_m-b_m|^\alpha + \eps |a_m-c_m|^\alpha , \eps |a_m-b_m|^\alpha + |a_m-c_m|^\alpha \}
$$
whence
$$
\left(1-\frac2m\right)^\alpha \le (1+\eps)\left(\frac12 + \frac1m\right)^\alpha.
$$
Letting $m \to \infty$ yields \eqref{eq:eps-ge-2-alpha-1} and completes the proof.
\end{proof}

To conclude this section we characterize when a snowflaked metric space isometrically embeds into an $\SRA(\alpha)$ free space. In the special case when the target space is a finite-dimensional normed vector space, the following result is \cite[Corollary 2.2]{LRW}.

\begin{theorem}
Let $(X,d)$ be a metric space. Then there exists $\alpha<1$ so that $(X,d^{\alpha})$ isometrically embeds into an $\SRA(\alpha)$ free space if and only if $X$ is finite.
\end{theorem}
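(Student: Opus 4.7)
The statement is essentially a direct consequence of Lemma \ref{lem:bestSRA} combined with the definitions. My plan is to handle each implication separately, with the nontrivial content residing in the ``only if'' direction.

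For the ``if'' direction, assume $X$ is finite. Then for any choice of $0<\alpha<1$, the snowflake $(X,d^\alpha)$ is again a finite metric space, hence trivially $\SRA(\alpha)$ free: every subset has cardinality at most $\#X$, so $N := \#X$ serves as the uniform bound in Definition \ref{definition:SRAfree}. The identity map then provides an isometric embedding of $(X,d^\alpha)$ into itself, which is the desired $\SRA(\alpha)$ free target. (One could instead embed isometrically via Fr\'echet into $\ell^\infty_N$, but no such detour is needed.)

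For the ``only if'' direction, suppose that for some $0<\alpha<1$ there is an isometric embedding $\iota\colon (X,d^\alpha)\to (Y,\rho)$, where $(Y,\rho)$ is an $\SRA(\alpha)$ free metric space with uniform cardinality bound $N\in\mathbb{N}$. By Lemma \ref{lem:bestSRA}, the snowflake $(X,d^\alpha)$ satisfies the $\SRA(2^\alpha-1)$ condition. An elementary calculation shows $2^\alpha-1<\alpha$ on $(0,1)$: the function $f(\alpha)=2^\alpha-1-\alpha$ vanishes at the endpoints $\alpha=0$ and $\alpha=1$ and is strictly convex, so it is negative in between. Since the class of $\SRA(\beta)$ spaces grows as $\beta$ increases, $(X,d^\alpha)$ also satisfies the $\SRA(\alpha)$ condition. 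The $\SRA(\alpha)$ condition is preserved under isometric embedding (it only involves the three pairwise distances of triples), so $\iota(X)\subset Y$ is an $\SRA(\alpha)$ subset of $Y$. By the $\SRA(\alpha)$ freeness of $Y$, we conclude $\#\iota(X)\le N$, and since $\iota$ is injective, $\#X\le N<\infty$.

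There is no real obstacle in this argument; the proof boils down to combining Lemma \ref{lem:bestSRA} with the monotonicity of the $\SRA(\alpha)$ class in $\alpha$ and the defining cardinality bound for $\SRA(\alpha)$ free spaces. The only small point to verify is the strict inequality $2^\alpha-1<\alpha$ on $(0,1)$, which is the mechanism that lets a snowflake-induced $\SRA(2^\alpha-1)$ condition be absorbed into the $\SRA(\alpha)$ freeness hypothesis on the target.
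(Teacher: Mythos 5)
Your proof is correct, and the ``only if'' direction is essentially the paper's own argument, just written out more carefully: the paper also passes (implicitly) through Lemma \ref{lem:bestSRA}, the inequality $2^\alpha-1<\alpha$, and the observation that the image of $X$ is then an $\SRA(\alpha)$ subset of the target, forcing $\#X\le N$. Where you genuinely diverge is the ``if'' direction. The paper invokes the theorem of Deza and Maehara \cite{DM}, which produces an exponent $\alpha<1$ and a dimension $n$ such that $(X,d^\alpha)$ embeds isometrically into $\R^n$, and then uses the Erd\H{o}s--F\"uredi result (Corollary \ref{EFcor}) to conclude that $\R^n$ is $\SRA(\alpha)$ free. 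You instead note that a finite metric space is trivially $\SRA(\alpha)$ free with $N=\#X$, so the identity map on $(X,d^\alpha)$ already witnesses the required embedding. Your route is logically valid and strictly more elementary --- it needs no external embedding theorem --- but it exposes that the ``if'' direction of the statement, as literally phrased, is vacuous for finite spaces. The paper's version buys a more meaningful conclusion (the target can be taken to be a fixed Euclidean space, which is the natural class of $\SRA$ free spaces motivating the whole discussion), and this is why the authors route through \cite{DM}; if one wanted the theorem to carry that content, the statement would need to restrict the admissible targets, at which point your shortcut would no longer apply.
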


\begin{proof}
Assume there is an $\alpha<1$  such that $(X,d^{\alpha})$ isometrically embeds into an $\SRA(\alpha)$ free.metric space $Z$. Then $(X,d^{\alpha})$ is also $\SRA(\alpha)$ free, so there exists $N\in\N$ such that for $F=(X,d^{\alpha})\subset Z$, $\#F\leq N$. Thus $X$ is finite. 

Conversely, assume that $X$ is finite. By \cite{DM}, there exists $\alpha<1$ and there exists $n\in\N$ such that $(X,d^{\alpha})$ embeds isometrically in $\R^n$. Since $\R^n$ is $\SRA(\alpha)$ free, the conclusion follows.
\end{proof}

\subsection{Bi-Lipschitz embeddability of countable ultrametric spaces with one limit point}

The following proposition characterizes the doubling and bi-Lipschitz embeddability properties of metric trees (as in Example \ref{example:metrictree}). Additionally, we characterize these properties for the compact countable ultrametric spaces with a single limit point from which these trees are defined and which also satisfy condition \eqref{eq:J-condition}. We begin with some notation. Let 
$$
(Y,d)=(\{p_{\infty},\ldots,p_2,p_1\},d)
$$
be a compact countable ultrametric space, where $p_j \to p_\infty$ and $p_{\infty}$ is the unique limit point of $\{p_j\}_{j=1}^{\infty}$. Reordering the sequence $\{p_j\}$ as necessary, we may assume that the function $j \mapsto d(p_j,p_\infty)$ is non-increasing. 

Let $\{t_k\}$ be the maximal strictly decreasing subsequence of $\{\tfrac12 d(p_j,p_\infty)\}_{j=1}^\infty$, i.e., $t_1>t_2>\cdots$ and $\{t_k:k=1,2,\ldots\} = \{ \tfrac12 d(p_j,p_\infty):j=1,2,\ldots \}$. Let $(T_t,d_T)$ be the metric tree associated to the sequence $\{t_k\}$ as in Example \ref{example:metrictree}. 

Finally, let $Y' = \{p_k':k=1,2,\ldots\} \subset Y$, where for each $k$ we have $\tfrac12 d(p_k',p_\infty) = t_k$. Observe that $(Y',d)$ isometrically embeds into $(T_t,d_T)$ as the set of endpoints of the vertical line segments, see the discussion in Example \ref{example:metrictree}. 

Observe that if $p_i,p_j \in Y$ are two points so that $d(p_i,p_\infty) = d(p_k',p_\infty)$ and $d(p_j,p_\infty) = d(p_\ell',p_\infty)$ with $k \ne \ell$, then
\begin{equation}\label{eq:d-p-i-p-j}
d(p_i,p_j) = \max\{2t_k,2t_\ell\}.
\end{equation}
Indeed, since $d(p_k',p_\infty) = 2t_k$ and $d(p_\ell',p_\infty) = 2t_\ell$, the ultrametric condition implies that $d(p_i,p_j) \le \max\{2t_k,2t_\ell\}$. On the other hand, if we assume without loss of generality that $k<\ell$, then the ultrametric condition also implies that 
\begin{equation}\label{eq:d2}
2t_k = d(p_k',p_\infty) \le \max\{d(p_i,p_j),d(p_\ell',p_\infty)\} = \max\{d(p_i,p_j),2t_\ell\}
\end{equation}
Since $t_k > t_\ell$ we conclude from \eqref{eq:d2} that $d(p_i,p_j) \ge 2t_k$ and so \eqref{eq:d-p-i-p-j} holds true.

\begin{proposition}\label{prop:sequence-embeddings}
Let $(Y,d)$ be a compact countable ultrametric space with a single limit point $p_\infty$, and let $(T_t,d_T)$ be the associated metric tree as described in the previous paragraph. Then the following are equivalent:
\begin{enumerate}
\item[{\em (1)}] $(T_t,d_T)$ is doubling.
\item[{\em (2)}] $(T_t,d_T)$ bi-Lipschitz embeds into $\R^{n}$ for some $n \in \N$.
\item[{\em (3)}] There exists $0<\delta<1$ and $m\in\N$ such that $t_{k+m}\leq (1-\delta) t_k$ for all $k\in\N$.
\item[{\em (4)}] $\limsup_{k\to\infty} \sup\{m:t_{k+m}>\frac{1}{2}t_k\}<\infty$.
\end{enumerate}
In case any of the above conditions are satisfied, then $(T_t,d_T)$ has finite total length.

Moreover, if
\begin{equation}\label{eq:J-condition}
\sup_k \# \{ j : \frac12 d(p_j,p_\infty) = t_k \} < \infty
\end{equation}
then the above four conditions are also equivalent to
\begin{enumerate}
\item[{\em (5)}] $(Y,d)$ is doubling.
\item[{\em (6)}] $(Y,d)$ bi-Lipschitz embeds into $\R^{n}$ for some $n\in\N$.
\end{enumerate}
In case these conditions on $Y$ are satisfied, then $\ell(Y)=\sum_{j=1}^{\infty} d(p_j,p_{j+1})$ is finite.
\end{proposition}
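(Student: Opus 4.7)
The plan is to close the loop $(3) \Rightarrow (2) \Rightarrow (1) \Rightarrow (4) \Rightarrow (3)$ for the tree, derive finite total length from $(3)$, and then transfer the conclusions to $Y$ using condition \eqref{eq:J-condition}. The equivalence $(3) \Leftrightarrow (4)$ is a direct sequence calculation: iterating (3) yields $t_{k + Jm} \le (1-\delta)^J t_k \le t_k/2$ for $J = \lceil \log 2 / \log(1/(1-\delta)) \rceil$, proving (4); conversely, if the $\limsup$ in (4) equals $M < \infty$, then $t_{k + M + 1} \le t_k/2$ for all sufficiently large $k$, which yields (3) with $\delta = 1/2$ and $m = M+1$ after enlarging $m$ to accommodate the finitely many initial indices. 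The finite-length statement $t_1 + \sum_k t_k < \infty$ follows from (3), since the sum splits into $m$ geometric series of common ratio $1 - \delta$.

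For $(3) \Rightarrow (2)$, I would construct an explicit bi-Lipschitz map $\Phi : T_t \to \R^{m+1}$ by sending $(x, 0) \in C_0$ to $x e_1$ and $(t_k, y) \in C_k$ to $t_k e_1 + y e_{(k \bmod m) + 2}$, where $\{e_1, \ldots, e_{m+1}\}$ is the standard basis. Since Euclidean distance is bounded by path length, $\Phi$ is $1$-Lipschitz. For the lower bound, the only nontrivial case is two points $p \in C_i$, $q \in C_j$ with $i \ne j$: if their assigned vertical axes differ, then the three quantities $t_i - t_j$, $y_i$, and $y_j$ contribute orthogonally to $|\Phi(p) - \Phi(q)|$, and the inequality $(a+b+c)^2 \le 3(a^2 + b^2 + c^2)$ gives $y_i + y_j + |t_i - t_j| \le \sqrt{3}\,|\Phi(p) - \Phi(q)|$. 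If instead $i \equiv j \pmod m$, then $|i - j| \ge m$ and (3) forces $|t_i - t_j| \ge \delta \max(t_i, t_j)$; combined with $y_i \le t_i$ and $y_j \le t_j$, this yields intrinsic distance $y_i + y_j + |t_i - t_j| \le 2\max(t_i, t_j) \le (2/\delta) |t_i - t_j| \le (2/\delta) |\Phi(p) - \Phi(q)|$.

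The implication $(2) \Rightarrow (1)$ is standard. For $(1) \Rightarrow (4)$, consider the tops $p_k' = (t_k, t_k)$ in $T_t$, which satisfy $d_T((0,0), p_\ell') = 2 t_\ell$ and $d_T(p_i', p_\ell') = 2 \max(t_i, t_\ell)$. Those $p_\ell'$ with $\ell \ge k$ and $t_\ell > t_k/2$ all lie in $\bar B((0,0), 2 t_k)$ and are pairwise at distance strictly greater than $t_k$; applying the doubling condition twice covers this ball with $C^2$ balls of radius $t_k/2$, each of which can contain at most one such point. Hence $\sup_k \#\{\ell \ge 1 : t_{k+\ell} > t_k/2\} \le C^2 - 1$, which is (4).

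Finally, for the equivalences under \eqref{eq:J-condition}: $Y'$ embeds isometrically into $T_t$ via $p_k' \mapsto (t_k, t_k)$ and sits isometrically inside $Y$. The argument of $(1) \Rightarrow (4)$, applied to balls in $Y'$ centered at $p_\infty$, gives $(5) \Rightarrow (4)$; the implication $(6) \Rightarrow (5)$ is standard. For the reverse, given the embedding $\Phi : T_t \to \R^{m+1}$ from (2), append $J - 1$ extra coordinates and isometrically embed each level $\{p \in Y : \tfrac12 d(p, p_\infty) = t_k\}$ (of cardinality at most $J$ by \eqref{eq:J-condition}) into $\R^{J-1}$ using Theorem \ref{th:finite-ultrametric-embeddability}, centered at $\Phi(p_k')$; the diameter bound $\le 2 t_k$ per level combined with the bi-Lipschitz bound on $\Phi$ shows the resulting map $Y \to \R^{m+J}$ is bi-Lipschitz. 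The total length estimate $\ell(Y) \le 2J \sum_k t_k < \infty$ follows from $d(p_j, p_{j+1}) \le 2 t_{k_j}$ (where $k_j$ is the level of $p_j$) together with \eqref{eq:J-condition}. The main obstacle is the embedding $(3) \Rightarrow (2)$: one must ensure that pairs of segments $C_i, C_j$ mapped to the same coordinate axis are sufficiently separated along $e_1$, which is precisely what the geometric-decay condition (3) provides.
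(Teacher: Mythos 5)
Your proposal is correct and follows essentially the same route as the paper's proof: the same explicit coordinate-axis-by-residue-mod-$m$ embedding of $T_t$, the same ``many $t_k$-separated points in a ball of radius $2t_k$'' violation of doubling for $(1)\Rightarrow(3)/(4)$, the same elementary computation for $(3)\Leftrightarrow(4)$ and the length bound, and the same appending of isometric level-embeddings (via Theorem \ref{th:finite-ultrametric-embeddability}) for $(2)\Rightarrow(6)$ under \eqref{eq:J-condition}. The only differences are cosmetic: you use the Euclidean norm and orthogonality where the paper uses the $\ell_1$ norm with a case analysis, and you target $(3)\Rightarrow(2)$ and $(1)\Rightarrow(4)$ rather than $(4)\Rightarrow(2)$ and $(1)\Rightarrow(3)$, which is immaterial given $(3)\Leftrightarrow(4)$.
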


Before turning to the proof, we give an example which shows that the extra condition \eqref{eq:J-condition} is needed for the equivalence with conditions (5) and (6).

\begin{example}\label{example:simplices}
Choose an increasing sequence of integers $N_1<N_2<\cdots$ and define a compact countable ultrametric space $Y = \{p_k\}_{k \in \N} \cup \{p_\infty\}$ with metric $d$ by setting
$$
d(p_j,p_k) = 2^{-\min\{\ell:j \wedge k \le N_\ell\}},
$$
where $j\wedge k = \min\{j,k\}$ and setting $p_\infty = \lim_{k\to\infty} p_k$. That is, $d(p_k,p_\ell) = 1$ if either $k$ or $\ell$ (or both) lie in $\{1,\ldots,N_1\}$,
$$
d(p_k,p_\ell) = \frac12 \quad \mbox{if either $k$ or $\ell$ lie in $\{N_1+1,\ldots,N_2\}$ and neither $k$ nor $\ell$ lie in $\{1,\ldots,N_1\}$,}
$$
$$
d(p_k,p_\ell) = \frac14 \quad \mbox{if either $k$ or $\ell$ lie in $\{N_2+1,\ldots,N_3\}$ and neither $k$ nor $\ell$ lie in $\{1,\ldots,N_2\}$,}
$$
and so on. The space $(Y,d)$ contains equilateral subsets of arbitrarily large cardinality, and hence does not bi-Lipschitz embed into any finite-dimensional Euclidean space. In particular, $(Y,d)$ is not doubling. Moreover, condition \eqref{eq:J-condition} is not satisfied, since $N_\ell \to \infty$.
\end{example}

\begin{proof}[Proof of Proposition \ref{prop:sequence-embeddings}]
We establish this result by proving the implications in the following diagram:
\begin{equation}\label{eq:diag1}
\begin{matrix}
(1) & & \Longrightarrow & & (3) \\
\big\Uparrow & & & & \big\Updownarrow \\
(2) & & \Longleftarrow & & (4) \\
\end{matrix}
\end{equation}
and in case \eqref{eq:J-condition} is satisfied, the further implications
\begin{equation}\label{eq:diag2}
\begin{matrix}
(2) & & & & (3) \\
\big\Downarrow & & & & \big\Uparrow \\
(6) & & \Longleftrightarrow & & (5) \\ 
\end{matrix}
\end{equation}

Note that the implications $(2) \Rightarrow (1)$ and $(6) \Rightarrow (5)$ are trivial. Moreover, the implication $(5) \Rightarrow (6)$ follows from the fact that there exists a bi-Lipschitz embedding of an ultrametric space $(X,d)$ into $\R^{n}$ for some $n\in\N$ if and only if $X$ is doubling \cite[Proposition 3.3]{LM}.

We now show that $(1) \Rightarrow (3)$, which we prove by contradiction. Assume that for each $\delta>0$ and $m \in \N$ there exists $k \in \N$ so that $t_{k+m} > (1-\delta) t_k$. Choose $\delta = \tfrac12$. Recall that $d(p_k',p_\ell') = 2\max\{t_k,t_\ell\}$ for all $k,\ell$. It follows that for all $\ell\ge k$,
$$
p_\ell' \in B_{d}(p_k',2t_k).
$$
Moreover, for all $\ell_1,\ell_2$ satisfying $k \le \ell_1,\ell_2 \le k+m$,
$$
d(p_{\ell_1}',p_{\ell_2}') = 2 \max \{t_{\ell_1},t_{\ell_2}\} \ge 2t_{k+m} > t_k.
$$
Thus we found a set of $m$ points with pairwise distances at least $t_k$ within a ball of radius $2t_k$. Since $m$ was arbitrary, we conclude that $(Y',d)$ is not doubling, and hence $(T_t,d_T)$ is also not doubling.

Note that the above proof also shows that $(5) \Rightarrow (3)$, since $Y'$ is a subset of $Y$.

Next, we verify that $(3) \Leftrightarrow (4)$. Assume that (3) is satisfied for some $\delta>0$ and $m \in \N$, and choose $h \in \N$ so that $(1-\delta)^h \le \tfrac12 < (1-\delta)^{h-1}$. Then $t_{k+mh} \le (1-\delta)^h t_k$ for all $k$. Thus for each $k$, 
$\sup \{ m':t_{k+m'} > \tfrac12 t_k \} \le m h - 1$ and consequently,
$$
\limsup_{k\to\infty} \sup\{m':t_{k+m'}>\frac{1}{2}t_k\} \le m \left( \frac{\log 2}{\log(1/(1-\delta))} + 1 \right) - 1.
$$
Conversely, suppose that (4) is satisfied. Then $\sup_k \sup \{ m : t_{k+m} > \tfrac12 t_k \} < \infty$. Choose $M$ so that 
\begin{equation}\label{eq:M-condition}
t_{k+M} \le \tfrac12 t_k \qquad \forall \, k.
\end{equation}
It follows that (3) holds true with $\delta = \tfrac12$ and $m = M$.

To complete the verification of the implications in \eqref{eq:diag1}, we show that $(4) \Rightarrow (2)$. Assuming that condition (4) holds, choose $M$ as in \eqref{eq:M-condition}. We define an embedding of $(T_t,d_T)$ into $\R^{M+1}$ (equipped with the $\ell_1$ metric) as follows. Let $\vec{e}_0,\ldots,\vec{e}_M$ be an orthonormal basis for $\R^{M+1}$. For a point $(x,0) \in T_t$ with $0\le x \le t_1$, we set
$$
F(x,0) = x \vec{e}_0,
$$
while for a point $(t_k,y t_k) \in T_t$ with $0\le y \le 1$ and $k \in \N$, we set
$$
F(t_k,y t_k) = t_k \vec{e}_0 + y t_k \vec{e}_{m_0},
$$
where $m_0 \in \{1,2,\ldots,M\}$ is the residue of $k$ mod $M$, i.e., $k = \ell M + m_0$ for some integer $\ell$.

Let $q_1 = (t_{k_1},y_1 t_{k_1})$ and $q_2 = (t_{k_2},y_2 t_{k_2})$ be two points lying on distinct vertical line segments in $T_t$. Assume without loss of generality that $k_1<k_2$. Then
\begin{equation}\label{eq:d-1-2}
d_T(q_1,q_2) = (t_{k_1} - t_{k_2}) + y_1 t_{k_1} + y_2 t_{k_2}.
\end{equation}
If the residues of $k_1$ and $k_2$ mod $M$ are distinct, then
$$
|F(q_1) - F(q_2)| = (t_{k_1} - t_{k_2}) + y_1 t_{k_1} + y_2 t_{k_2} = d_T(q_1,q_2).
$$
This holds, in particular, if $k_2 < k_1 + M$.

Assume therefore that $k_2 \ge k_1 + M$ and $k_1$ and $k_2$ have the same residue mod $M$, i.e., $k_1 = \ell_1 M + m_0$ and $k_2 = \ell_2 M + m_0$. Then $k_2 - k_1 \ge M$ and hence
\begin{equation}\label{eq:assumption0}
t_{k_2} \le \tfrac12 t_{k_1}.
\end{equation}
In this case,
\begin{equation}\label{eq:d-E-1-2}
|F(q_1) - F(q_2)| = (t_{k_1} - t_{k_2}) + |y_1 t_{k_1} - y_2 t_{k_2}|.
\end{equation}
It is clear from \eqref{eq:d-1-2} and \eqref{eq:d-E-1-2} that $|F(q_1)-F(q_2)| \le d_T(q_1,q_2)$. For the converse inequality, it suffices to prove that
$$
y_1 t_{k_1} + y_2 t_{k_2} \le C\bigl( (t_{k_1} - t_{k_2}) + |y_1 t_{k_1} - y_2 t_{k_2} | \bigr)
$$
for some absolute constant $C \ge 1$. We will show that the conclusion holds with $C = 4$.

Suppose first that
\begin{equation}\label{eq:assumption1}
y_1 t_{k_1} \ge y_2 t_{k_2}.
\end{equation}
Then the desired inequality is
$$
(C + C y_2 + y_2) t_{k_2} \le (C + C y_1 - y_1) t_{k_1}.
$$
In view of \eqref{eq:assumption1} it suffices to have
$$
C t_{k_2} \le (C-2y_1) t_{k_1}
$$
and in view of \eqref{eq:assumption0} and the fact that $0 \le y_1 \le 1$ it suffices to have
\begin{equation}\label{eq:CC}
\frac12 C \le C - 2
\end{equation}
and so the choice $C = 4$ suffices.

Next, suppose that 
\begin{equation}\label{eq:assumption2}
y_1 t_{k_1} \le y_2 t_{k_2}.
\end{equation}
Following a similar line of reasoning, we again arrive to \eqref{eq:CC} and conclude that $C = 4$ suffices.

We conclude that if condition (4) holds, then $F$ defines a $4$-bi-Lipschitz embedding of $(T_t,d_T)$ into $\R^{M+1}$ with the $\ell_1$ metric. Hence $(T_t,d_T)$ embeds $L$-bi-Lipschitzly into $\R^{M+1}$ for some $L = L(M)$.

Assuming now that one of the conditions (1) -- (4) is satisfied, we compute the length of the metric tree $T_t$ as $\ell(T_t) = t_1 + \sum_{k=1}^\infty t_k$. Since 
$$
\sum_k t_k = \sum_{\ell=0}^\infty \sum_{k_0=1}^m t_{\ell m + k_0} \le \sum_{\ell=0}^\infty (1-\delta)^\ell \sum_{k_0=1}^m t_{k_0} < \infty,
$$
we conclude that $T_t$ is rectifiable.

To finish the proof, we show that the remaining implication $(2) \Rightarrow (6)$ in \eqref{eq:diag2} holds under the extra assumption \eqref{eq:J-condition}. Thus assume that $F:(T_t,d_T) \to \R^N$ is an $L$-bi-Lipschitz embedding, and that $\#\{j : \tfrac12 d(p_j,p_\infty) = t_k \} \le J < \infty$ for all $k$. We will construct a bi-Lipschitz embedding $G:(Y,d) \to \R^{N+J-1}$. Considering only the subset $(Y',d)$ in $(T_t,d_T)$ we have that
$$
\frac1L d(p_k',p_\ell') \le |F(p_k') - F(p_\ell')| \le L d(p_k',p_\ell') \qquad \forall \, k,\ell.
$$
In particular, if $k<\ell$ then we have
$$
\frac2L t_k \le |F(p_k') - F(p_\ell')| \le 2L t_k \qquad \forall \, k,\ell.
$$
For each $k$, the sequence $Y$ contains an ultrametric subset $Z_k$ consisting of at most $J$ elements $p_j$ satisfying the condition 
\begin{equation}\label{eq:k-from-j}
d(p_j,p_\infty) = 2t_k.
\end{equation}
Let $\iota_k:Z_k \to \R^{J-1}$ be an isometric embedding; see Theorem \ref{th:finite-ultrametric-embeddability} for the existence of such a map. Applying a translation in $\R^{J-1}$ if necessary, we also assume that the point $p_k'$ (which occurs as an element in $Z_k$) is mapped by $\iota_k$ to the zero element of $\R^{J-1}$. 

We now define 
$$
G(p_j) = F(p_k') \oplus \tfrac1{8L} \iota_k(p_j)
$$
for $j=1,2,\ldots$, where $k = k(j)$ is defined as in \eqref{eq:k-from-j}, and we show that this map is the desired bi-Lipschitz embedding.

Now let $i$ and $j$ be any two indexes satisfying $d(p_i,p_\infty) = 2t_k$ and $d(p_j,p_\infty) = 2t_\ell$. If $k = \ell$ then
$$
|G(p_i) - G(p_j)| = |w_i-w_j| = d(p_i,p_j).
$$
We thus consider the case $k \ne \ell$, and we assume without loss of generality that $k<\ell$. Then
\begin{equation*}\begin{split}
|G(p_i) - G(p_j)| &\le |G(p_k') - G(p_\ell')| + |G(p_i) - G(p_k')| + |G(p_j) - G(p_\ell')| \\
&= |F(p_k') - F(p_\ell')| + \frac1{8L}d(p_i,p_k') + \frac1{8L}d(p_j,p_\ell') \\
&\le L d(p_k',p_\ell') + \frac1{8L} (2 t_k  +2 t_\ell) \\
&\le ( 2L + \frac1{2L} ) t_k = ( L + \frac1{4L} ) d(p_i,p_j);
\end{split}\end{equation*}
see \eqref{eq:d-p-i-p-j} for the final equality. On the other hand,
\begin{equation*}\begin{split}
|G(p_i) - G(p_j)| &\ge |F(p_k') - F(p_\ell')| - \frac1{8L} d(p_i,p_k') - \frac1{8L} d(p_j,p_\ell') \\
&\ge \frac1L d(p_k',p_\ell') - \frac1{2L} t_k  \\
&= \frac1{2L} t_k = \frac1{4L} d(p_i,p_j).
\end{split}\end{equation*}
We conclude that $G$ is a bi-Lipschitz embedding of $(Y,d)$ into $\R^{N+J-1}$. We leave to the reader the verification that $\ell(Y)$ is finite in this situation.

This completes the proof of Proposition \ref{prop:sequence-embeddings}.
\end{proof}

\section{Ordered sets and rough self-contracting curves}\label{sec:self-contracting}

\subsection{Definitions, background and motivation}

The following class of curves was studied in \cite{DaDuDe} (where the same curves were introduced but with reverse parametrization). We give a unified definition which covers both the case of curves and finite sets.

\begin{definition}[Rough $\lambda$-self-contracting curve]
Fix $-1 \le \lambda \le 1$ and let $I \subset \R$ be a compact subset. A map $\gamma$ from $I$ into a metric space $(X,d)$ is said to be a {\em rough $\lambda$-self-contracting curve fragment} if for every $t_{1}\leq t_{2}\leq t_{3}$ in $I$ we have
\begin{equation}\label{l-curve}
d(\gamma(t_{2}),\gamma(t_{3}))\leq d(\gamma(t_{1}),\gamma(t_{3}))+\lambda
d(\gamma(t_{1}),\gamma(t_{2})).
\end{equation}
\end{definition}

We typically consider the case when the parameterizing set $I$ is either an interval $[0,T]$ (in which case we call $\gamma$ a {\em rough $\lambda$-self-contracting curve}) or a finite ordered set $\{t_1<t_2<\cdots<t_N\}$ (in which case we call $\gamma$ a {\em rough $\lambda$-self-contracting finite ordered set}). We stress that $\gamma$ is in general {\bf not} assumed to be continuous. 

In what follows, we shall assume that all curves are injective mappings, as a non-injective rough $\lambda$-self-contracting curve would necessarily be locally constant. The set $\gamma(I)$ inherits a total order from 
$I$, defined as follows: if there exist $t,s\in I$ with $t< s$ such that $x=\gamma(t)$ and $y=\gamma(s)$ then we write $x<y$.

\smallskip

The case $\lambda=0$ corresponds to the original notion of {\em self-contracted} curves introduced in \cite{DLS}, while $\lambda=-1$ corresponds to the class of {\em geodesic sets}, i.e., subsets of $\R$ with the Euclidean metric. Inverting the orientation gives the analogous notion of {\em rough $\lambda$-self-expansion}, introduced in \cite{DaDuDe} under the name {\em $\lambda$-curves}, the case $\lambda=0$ corresponding to the class of {\em self-expanded} curves. A map $\gamma:I \to (X,d)$ is rough $\lambda$-self-contracting if and only if $\tilde\gamma:I \to (X,d)$ given by $\tilde\gamma(t) = \gamma(-t)$ is rough $\lambda$-self-expanding.

\smallskip 

Finally, we say that $\gamma$ is a {\em rough $\lambda$-self-monotone curve} if it is either rough $\lambda$-self-contracting or rough $\lambda$-self-expanding. One can also analogously define {\em rough $\lambda$-self-monotone sets}. Notice that the class of rough $\lambda$-self-monotone curve is invariant under time reversal.

\begin{example}
Let $x_1$ and $x_3$ be two points in $\R^2$ and let us determine where a third point, $x_2$, could be located in order that \eqref{l-curve} be satisfied. For simplicity, assume $x_1=(1,0)$, $x_3=(0,0)$ and $x_2=re^{i\theta}$. 
Then \eqref{l-curve} reads
\begin{equation}\label{eggeq}
r\leq 1+\lambda \sqrt{(r\cos\theta-1)^2+r^2\sin^2\theta}.
\end{equation}
In other words, the image of $[t_1,t_3]$ under a rough $\lambda$-self-expanding map $\gamma$ with $\gamma(t_1) = (0,0)$ and $\gamma(t_3) = (1,0)$ must lie in the region $A=\{(x,y) : \sqrt{x^2+y^2}\leq 1+\lambda \sqrt{(x-1)^2+y^2}\}$. 
Now let 
\[
X=\{(x,y)\in\R^2:\sqrt{x^2+y^2}= 1+\lambda \sqrt{(x-1)^2+y^2};y\geq 0\}\cup (a,0]\times\{0\}, \quad a = -\frac{1+\lambda}{1-\lambda};
\] 
see Figure \ref{fig:lambda-curve-figure} for the case $\lambda=\tfrac12$. 

\begin{figure}[h]
\includegraphics[width=5.5cm]{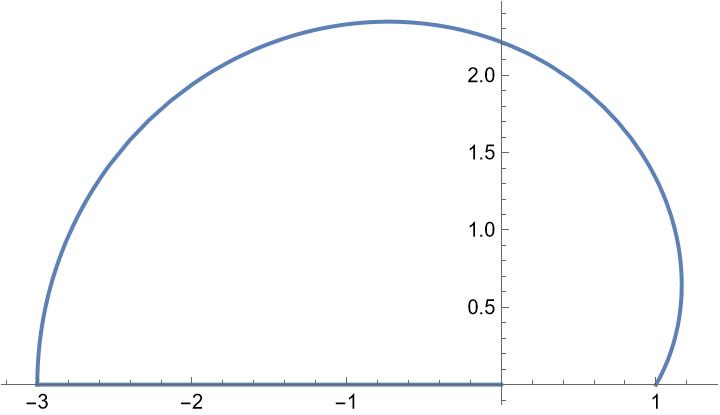}
\caption{The curve $\{(x,y):\sqrt{x^2+y^2} = 1 + \tfrac12 \sqrt{(x-1)^2+y^2}\} \cup [-3,0]\times\{0\}$}\label{fig:lambda-curve-figure}
\end{figure}
\end{example}

The class of rough $\lambda$-self-monotone curves can be related to other classes of curves in the literature. For instance, let us discuss the connection to the well-known class of bounded turning curves.

\begin{definition}[Bounded turning curves]
Let $\gamma:I\to X$ be a injective curve. We say that  $\gamma$ is {\em $M$-bounded turning} if, for any pair of points $x,y$ in $\gamma(I)$,
\[
\mbox{diam} \,\Gamma_{xy}\leq M d(x,y),
\]
where $\Gamma_{xy}$ denotes the arc connecting $x$ to $y$.
\end{definition}

\begin{lemma}\label{Lemma:expcurveBT}
Every rough $\lambda$-self-monotone curve is $2$-bounded turning for $-1\leq \lambda<0$ and $2\left(\tfrac{1+\lambda}{1-\lambda}\right)$-bounded turning for $0\leq\lambda<1$.
\end{lemma}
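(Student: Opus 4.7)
The plan is first to reduce to the rough $\lambda$-self-contracting case: if $\gamma\colon I \to X$ is rough $\lambda$-self-expanding, then $\tilde\gamma(t) := \gamma(-t)$ is rough $\lambda$-self-contracting, and since the image $\gamma(I)$, the arc $\Gamma_{xy}$, and all pairwise distances on that arc are invariant under this reparametrization, the bounded turning constant is unchanged. So I may assume $\gamma$ is rough $\lambda$-self-contracting.

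Next, I would fix $x = \gamma(s)$ and $y = \gamma(t)$ with $s<t$ and apply \eqref{l-curve} to the ordered triple $(s,u,t)$ to obtain, for any intermediate point $z = \gamma(u)$ with $s \le u \le t$,
\begin{equation}\label{eq:plan1}
d(z,y) \le d(x,y) + \lambda\, d(x,z).
\end{equation}
In the range $0 \le \lambda < 1$, combining \eqref{eq:plan1} with the triangle inequality $d(x,z) \le d(x,y) + d(y,z)$ and solving for $d(y,z)$ yields
\[
d(y,z) \le \frac{1+\lambda}{1-\lambda}\, d(x,y),
\]
and hence, by one more triangle application, $d(x,z) \le \frac{2}{1-\lambda}\, d(x,y)$. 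In the range $-1 \le \lambda < 0$, the term $\lambda\, d(x,z)$ in \eqref{eq:plan1} is non-positive, so \eqref{eq:plan1} immediately gives $d(y,z) \le d(x,y)$, and the triangle inequality gives $d(x,z) \le 2\, d(x,y)$.

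To conclude, for any two intermediate points $z_1, z_2 \in \Gamma_{xy}$ the triangle inequality gives $d(z_1,z_2) \le d(z_1,y) + d(z_2,y)$. In the case $0 \le \lambda < 1$ this is bounded by $\tfrac{2(1+\lambda)}{1-\lambda}\, d(x,y)$, which also dominates the previously derived bounds on $d(x,z)$ and $d(y,z)$ and (since $\tfrac{2(1+\lambda)}{1-\lambda} \ge 2$ when $\lambda \ge 0$) the quantity $d(x,y)$ itself; taking the supremum over all pairs of points in $\Gamma_{xy}$ yields the claimed diameter bound. The case $-1 \le \lambda < 0$ proceeds analogously with the uniform bound $2\, d(x,y)$. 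All steps are elementary; there is no serious obstacle beyond separating the two sign regimes for $\lambda$, since the rearrangement used to bound $d(y,z)$ when $\lambda \ge 0$ degenerates when $\lambda < 0$ but is then replaced by the trivial bound coming from the non-positivity of $\lambda\, d(x,z)$.
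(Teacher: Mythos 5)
Your proof is correct and follows essentially the same route as the paper: derive from \eqref{l-curve} plus the triangle inequality that distances from one endpoint of the arc to intermediate points are bounded by $\tfrac{1+\lambda}{1-\lambda}\,d(x,y)$ (resp.\ $d(x,y)$ when $\lambda<0$), then bound the diameter by going through that endpoint. The only (immaterial) differences are that you work in the self-contracting normalization while the paper uses the self-expanding one, and you treat the case $-1\le\lambda<0$ directly rather than reducing it to the case $\lambda=0$.
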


\begin{proof}
Without loss of generality, we can assume that the curve $\gamma$ is a $\lambda$-self-expanding curve.
Let $0\leq\lambda<1$. For $t_1\leq t_2\leq t_3$, the triangle inequality gives
\begin{equation*}
\lambda d(\gamma(t_{2}),\gamma(t_{3}))\leq \lambda d(\gamma(t_{1}),\gamma(t_{2}))+ \lambda d(\gamma(t_{1}),\gamma(t_{3})).
\end{equation*}
Summing this inequality with \eqref{l-curve} yields
\begin{equation}\label{eq:c-condition}
d(\gamma(t_{1}),\gamma(t_{2}))\leq \left(\dfrac{1+\lambda}{1-\lambda}\right)d(\gamma(t_{1}),\gamma(t_{3})).
\end{equation}
Let $c:=\dfrac{1+\lambda}{1-\lambda}$. Notice that
\[
d(s,t)\leq d(s,x)+d(x,t)\leq 2c d(x,z) \qquad \mbox{if $x<s<t<z$}
\]
by two applications of \eqref{eq:c-condition}. To finish the proof, observe that every rough $\lambda$-self-expanding curve for $-1\leq \lambda<0$ is self-contracting so it is $2$-bounded turning.
\end{proof}

For later purposes, we define 
$$
M_\lambda := \begin{cases} 2 \tfrac{1+\lambda}{1-\lambda}, & 0\le\lambda<1, \\ 2, & -1 \le \lambda < 0. \end{cases}
$$

Observe that the converse of Lemma \ref{Lemma:expcurveBT} does not hold. For an example with $\lambda=0$, let $\delta>0$ and let $X_{\delta}=\{e^{i\theta}\in\mathbb{C}: 0\leq\theta<\pi+\delta\}$. The curve $X_{\delta}$ is $2$-bounded turning but it is not self-expanding.

On the other hand, every $1$-bounded turning curve is both self-expanding and self-contracting.

\begin{remark}\label{rem:ultraorder}
Any compact metric space satisfying the $\SRA(0)$ condition (i.e., any compact ultrametric space) can be equipped with a linear order with respect to which it satisfies the $1$-bounded turning condition, and hence is both self-expanding and self-contracting. See, e.g., \cite[Lemma 2.3]{KMZ}. 
\end{remark}

\subsection{Main theorem}

In what follows, we prove Theorem \ref{thm:rectifiability}, which asserts the rectifiability of bounded, roughly $\lambda$-self-monotone curves in $\SRA(\alpha)$ free metric spaces for suitable choices of $\lambda$ and $\alpha$. For the reader's convenience we restate the theorem here, along with several remarks.

\begin{theorem}\label{thm:rectifiability2}
Let $\tfrac12<\alpha<1$ and let $(X,d)$ be an $\SRA(\alpha)$ free metric space. Then there exists $\lambda_0 = \lambda_0(\alpha,X)>0$ so that any bounded, rough $\lambda$-self-monotone curve in $(X,d)$, with $\lambda \in [-1,\lambda_0)$ is rectifiable. Furthermore, the length of $\gamma$ is at most $C' \diam(\gamma)$ for some constant $C' = C'(\alpha,X)$.
\end{theorem}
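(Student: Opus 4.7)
The plan is to adapt Zolotov's argument for self-contracting curves ($\lambda=0$) by tracking the extra slack introduced by the rough parameter $\lambda$. The class of rough $\lambda$-self-monotone curves is invariant under time reversal, so first I would assume without loss of generality that $\gamma$ is rough $\lambda$-self-expanding, meaning that for $s_1\le s_2\le s_3$
\[
d(\gamma(s_1),\gamma(s_2))\le d(\gamma(s_1),\gamma(s_3))+\lambda d(\gamma(s_2),\gamma(s_3)).
\]
By Lemma \ref{Lemma:expcurveBT}, $\gamma$ is $M_\lambda$-bounded turning, where $M_\lambda=2(1+\lambda)/(1-\lambda)$ for $0\le\lambda<1$ (and $M_\lambda=2$ otherwise); in particular the bounded-turning constant remains uniformly controlled for small $\lambda$. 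Observe also that for $\lambda<0$ the hypothesis is strictly stronger than self-contraction, so that case reduces to Zolotov's original theorem; the only new content is the regime $\lambda\in[0,\lambda_0)$.

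I would then argue by contradiction. Suppose the conclusion fails, so that there exist bounded rough $\lambda$-self-expanding curves in $X$ with arbitrarily large ratio $L/\diam$, where $L$ denotes the length. After rescaling we may assume $\diam(\gamma)=1$ and $L$ very large. The goal is to produce inside $\gamma$ an ordered set $\{z_1<\cdots<z_m\}$ satisfying the $\SRA(\alpha)$ condition with $m\to\infty$ as $L\to\infty$; this would contradict the uniform bound $N=N(\alpha,X)$ on the cardinality of any $\SRA(\alpha)$ subset of $X$ afforded by the $\SRA(\alpha)$-free hypothesis, forcing $L\le C'(\alpha,X)\diam(\gamma)$. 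The production of $\{z_j\}$ proceeds in two phases. First, realise $L$ by a fine partition $y_0<y_1<\cdots<y_M$ along $\gamma$ and apply a dyadic pigeonhole on the jumps $d(y_i,y_{i+1})\in(0,1]$ to extract a quasi-uniform subsequence $x_1<\cdots<x_n$ at a single dyadic scale $r$, i.e.\ $d(x_i,x_{i+1})\in[r,2r]$ for all $i$, with $n\gtrsim L/\log(1/r)$. Second, thin this quasi-uniform sequence further, keeping only points whose every triple satisfies the $\SRA(\alpha)$ inequalities, and show that the retained cardinality still grows without bound with $L$.

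The heart of the matter, and the main obstacle, is the second phase. For a retained triple $x_i<x_j<x_k$ with pairwise distances $a=d(x_i,x_j)$, $b=d(x_j,x_k)$, $c=d(x_i,x_k)$, rough self-expansion gives $a\le c+\lambda b$, bounded turning forces $a,b\le M_\lambda c$, and the triangle inequality yields $c\le a+b$. The $\SRA(\alpha)$ condition then reduces, in each of the three orderings of the pairwise distances, to excluding configurations close to the collinear equality $c\simeq a+b$, which is the precise obstruction identified in Remark \ref{rem:SRA-remark}. The hypothesis $\alpha>\tfrac12$ is expected to enter here as the threshold below which there is insufficient slack in the $\SRA(\alpha)$ inequality to absorb both the quasi-uniform spread of the sample and the $\lambda$-error from self-expansion, and correspondingly $\lambda_0(\alpha,X)$ should shrink to zero as $\alpha\searrow\tfrac12$. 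The combinatorial challenge is to show that the thinning discards a negligible proportion of the quasi-uniform sample, so that the surviving $\SRA(\alpha)$-subset still has cardinality comparable to $n$ and hence eventually exceeds $N(\alpha,X)$. Tracing the resulting estimates through both phases yields the quantitative bound $L\le C'(\alpha,X)\diam(\gamma)$ claimed in the theorem.
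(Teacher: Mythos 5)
Your overall strategy (reduce to rough $\lambda$-self-expanding sets via time reversal, use $M_\lambda$-bounded turning, argue by contradiction that a set with $L(S)>C\,D(S)$ must contain an $\SRA(\alpha)$ subset of cardinality exceeding the bound $N(\alpha,X)$) matches the paper's. But there is a genuine gap at exactly the point you flag as ``the heart of the matter'': you give no argument for the second phase, and the mechanism you propose would not work as stated. First, the dyadic pigeonhole extraction of a single-scale quasi-uniform subsequence with $n\gtrsim L/\log(1/r)$ is not available: consecutive jumps along $\gamma$ have no lower bound, so the number of relevant dyadic scales is not controlled by $L$ and $\diam(\gamma)$ alone, and the paper does not use any such single-scale reduction. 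Second, and more seriously, the claim that one can ``thin'' the sample by discarding a negligible proportion of points so that every surviving triple satisfies the $\SRA(\alpha)$ inequalities is unsupported and is not how the obstruction is overcome. The $\SRA(\alpha)$ condition involves all three cyclic inequalities for each triple, and there is no density argument showing a positive proportion of points survive; indeed the size of the $\SRA(\alpha)$ subset ultimately produced in the paper is governed by a Ramsey number for $3$-uniform hypergraphs, which is exponentially small compared to $n$, not comparable to $n$.

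Concretely, the paper fills this gap with two ingredients absent from your proposal. (1) Proposition \ref{prop:1}: a long rough $\lambda$-self-expanding set contains an $m$-element subset satisfying the \emph{medial ordered} $\SRA(\theta)$ condition (only the inequality $d(x_1,x_3)\le d(x_1,x_2)+\theta\,d(x_2,x_3)$ for ordered triples). This is proved not by pigeonholing on scales but by an inductive replacement procedure on $m$-element index sets $P^t$, monitored by a weighted potential $\cS^t=\sum\rho^{m-1-a}d(x_{p_a^t},x_{p_b^t})$ whose near-monotonicity forces $L(S)\le C\,D(S)$ if no such subset exists. (2) Ramsey's theorem for $3$-uniform hypergraphs: triples in the resulting set are $2$-colored according to whether the rough $\alpha$-self-contracting inequality holds, and Proposition \ref{prop:2} rules out the monochromatic ``blue'' alternative for bounded-turning sets with the medial ordered condition, leaving a $K$-element subset on which all three families of $\SRA(\alpha)$ inequalities hold (Lemma \ref{lem:elementary}). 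Without substitutes for these two steps, your outline restates the difficulty rather than resolving it.
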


\begin{remark}\label{rem:rectifiability-remark}
We recall that Zolotov \cite{Z} proves that self-contracting curves ($\lambda=0$) in $\SRA(\alpha)$ free metric spaces are rectifiable whenever $\alpha>\tfrac12$. It remains unclear whether this result is sharp; specifically, whether the existence of an unrectifiable self-contracting curve in $(X,d)$ necessarily implies that $(X,d)$ is not $\SRA(\alpha)$ free for any $0\leq \alpha<1$. The proof which we give for Theorem \ref{thm:rectifiability2} closely follows Zolotov's argument.
\end{remark}

\begin{remark}\label{rem:dependence-of-constants-remark}
The dependence of $\lambda_0$ and $C$ on the metric space $X$ is only in terms of the maximal cardinality $K$ of any subset $S\subset X$ satisfying the $\SRA(\alpha)$ condition. See e.g.\ Remark \ref{EFremark} for estimates on the size of $K$ in the case when $X = \R^n$. When $X = \R^n$, the value of $\lambda_0$ which we obtain through our proof is substantially smaller than the best currently known value (e.g.\ $\lambda = \tfrac1n$) from \cite{DaDuDe}. However, the argument which we provide here uses only general metric properties of the underlying space, and does not rely on specifically Euclidean convex geometric results such as the Carath\'eodory theorem. As such it applies to a substantially broader array of ambient spaces.
\end{remark}

\begin{remark}\label{rem:rectifiability-remark2}
Observe that bounded unrectifiable self-monotone curves can be constructed in various spaces, including the Heisenberg group (Example \ref{example:heis}), Hilbert spaces (Example \ref{example:hilbert}), and some metric trees (Example \ref{example:metrictree}). In Hilbert spaces, such a curve can be constructed as shown in \cite[Example 3.5]{ST} or \cite[Example 2.2]{DDDR}. Additionally, the set $X = \{ (0,t) : 0 \le t \le T\} \subset \Heis^1$ is a self-contracting curve, as demonstrated in \cite[8.1]{LOZ}. By Remark \ref{rem:ultraorder}, self-monotone curves can also be constructed in metric trees or Laakso graphs, as they are $\SRA(0)$ full. 

One can easily check that the sequence of points $\{p_i\}_{i=1}^\infty$ in Example \ref{example:metrictree} forms a self-contracting curve.\footnote{Here $\{p_i\}_{i=1}^\infty$ is a non-continuous self-contracting curve and its length is defined as the length of the polygonal curve connecting the points.} However, the finiteness of the length of the curve depends on the defining sequence $t=\{t_i\}$. For instance, the sequence $t_i=\frac{1}{i}$ yields a curve of infinite length, while the sequence $t_i=\frac{1}{2^{i-1}}$ results in a curve of finite length. See, for example \cite[Example 31 and 32]{LOZ}. Furthermore, one can check that in the first case, $T_t$ does not admit a bi-Lipschitz embedding into Euclidean space, while in the second case it does. See also Proposition \ref{prop:sequence-embeddings}.

Whether or not there exists a bounded unrectifiable self-monotone curve in the Laakso graph remains an interesting open problem, see the discussion surrounding Question \ref{Q:Laakso-existence} for more details.
\end{remark}

\begin{remark}
Theorem \ref{thm:rectifiability2} also applies to semi-globally $\SRA(\alpha)$ free spaces. A metric space $(X,d)$ is said to be {\em semi-globally $\SRA(\alpha)$ free}, for $0\le \alpha < 1$, if for each $x\in X$ and $r>0$ there exists $N_{r,x}\in\N$ such that for each $F\subset B(x,r_x)$, if $F$ satisfies the $\SRA(\alpha)$ condition then $\#F\leq N_{r,x}$. The class of semi-globally $\SRA(\alpha)$ free spaces (see \cite[Theorem 2]{LOZ}) includes the following:
\begin{itemize}
\item Finite-dimensional Alexandrov spaces of curvature $\geq k$ with $k\in\R$.
\item Complete, locally compact Busemann NPC spaces (e.g., CAT(0)-spaces) with
locally extendable geodesics.
\end{itemize}
\end{remark}

\subsection{Proof of Theorem \ref{thm:rectifiability}}

The first step in the proof is a reduction from curves to finite ordered sets. To this end, we introduce discrete analogs for the diameter and length of a curve $\gamma$.

\begin{definition}[Discrete diameter and discrete length]\label{def:discrete-diameter-length}
Let $t_1<\cdots<t_m$ be a finite set of real numbers, and let $S = \gamma(\{t_1,\ldots,t_m\}) \subset (X,d)$ be a discrete ordered set, with $x_i = \gamma(t_i)$. Define the {\em discrete length} of $S$ to be
$$
L(S) := \sum_{i=1}^{m-1} d(x_i,x_{i+1})
$$
and the {\em discrete diameter} of $S$ to be
$$
D(S) := d(x_1,x_m).
$$
\end{definition}

The terminology `discrete diameter' is motivated by the case of rough $\lambda$-self-monotone sets. Note that if $S$ is a rough $\lambda$-self-monotone set, then $\diam(S) \le M_\lambda D(S) \le M_\lambda \diam(S)$, where $M_\lambda$ is defined just after the proof of 
Lemma \ref{Lemma:expcurveBT}.

\begin{remark}\label{rem:continuous-to-discrete}
Note that if $\gamma:I \to X$ is a bounded, unrectifiable curve and $C>0$ is any positive constant, then there exists a discrete ordered subset $S \subset \gamma$ with $L(S) > C D(S)$. Stated another way, if $\gamma$ is bounded and $L(S) \le C D(S)$ for all discrete ordered subsets $S$ of $\gamma$, then $\gamma$ is rectifiable and has length at most $C\diam(\gamma)$.
\end{remark}

We now state a discrete analog of Theorem \ref{thm:rectifiability}.

\begin{theorem}\label{thm:discrete-rectifiability}
For each $\tfrac12<\alpha<1$ and $K \in \N$, there exists $\lambda_0=\lambda_0(\alpha,K)>0$ and $C = C(\alpha,K)>0$ so that if $(\gamma,d)$ is a rough $\lambda$-self-monotone finite ordered set, $-1\le\lambda<\lambda_0$, satisfying $L(\gamma) > C D(\gamma)$, then $(\gamma,d)$ contains a $K$ element subset $(A,d)$ satisfying the $\SRA(\alpha)$ condition.
\end{theorem}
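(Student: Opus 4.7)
Proof plan. This discrete theorem implies the full Theorem \ref{thm:rectifiability2} via the observation in Remark \ref{rem:continuous-to-discrete}. The argument closely follows Zolotov's proof \cite{Z} of the analogous result for self-contracting finite ordered sets ($\lambda = 0$), with parameter adjustments to accommodate small positive $\lambda$. Without loss of generality I treat the rough $\lambda$-self-contracting case; the self-expanding case follows by reversing orientation, under which the discrete length, discrete diameter, and $\SRA(\alpha)$ property on unordered subsets are all preserved. By Lemma \ref{Lemma:expcurveBT}, every pairwise distance in $\gamma$ is bounded by $M_\lambda D(\gamma)$, and the rough $\lambda$-self-contracting condition descends to every induced sub-sequence.

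The core construction extracts a subsequence $y_1 < y_2 < \cdots < y_K$ with approximately ultrametric structure. Concretely, I aim to produce anchors $y_i$ and radii $R_1 > R_2 > \cdots > R_K$ satisfying: \textup{(i)} $R_{i+1} \le \sigma R_i$ for some $\sigma = \sigma(\alpha, K) \in (0,1)$; \textup{(ii)} $d(y_i, y_j) \le R_i$ for all $j > i$; and \textup{(iii)} $d(y_i, y_j) \ge (1-\tau) R_i$ for all $j > i$, with some small $\tau = \tau(\alpha, K) > 0$. The extraction is iterative: using the monotonicity of $d(x_i, x_m)$ in $i$ (a consequence of self-contraction, with a rough-error margin controlled by $\lambda_0$), partition the indices into annular scales measured from the terminal point $x_m$. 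A pigeonhole argument shows that, provided $L(\gamma) > C D(\gamma)$ with $C$ sufficiently large, at least one scale contains a sub-arc whose length-to-diameter ratio is large enough to iterate. Selecting $y_{i+1}$ as the entry point of this sub-arc, and choosing the annulus so that the entry lies at its far edge, secures both \textup{(i)} and \textup{(iii)}; property \textup{(ii)} is automatic since all $y_j$ with $j > i$ remain in the tail of $y_i$.

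With the subsequence in hand, I would verify the $\SRA(\alpha)$ condition triple by triple. For any ordered $y_i < y_j < y_k$, write $a = d(y_i, y_j)$, $b = d(y_j, y_k)$, $c = d(y_i, y_k)$. Properties \textup{(ii)}--\textup{(iii)} imply $a, c \in [(1-\tau) R_i, R_i]$ and $b \in [(1-\tau) R_j, R_j]$, with $R_j \le \sigma R_i$; once $\sigma$ is chosen with $\sigma < 1 - \tau$, the side $b$ is the smallest. A short case analysis reduces the full $\SRA(\alpha)$ condition on $\{y_i, y_j, y_k\}$ in this regime to the single inequality $|a - c| \le \alpha b$. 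Using $|a - c| \le \tau R_i$ and $b \ge (1-\tau)\sigma^{K-1} R_i$, this inequality holds provided $\tau \le \alpha(1-\tau)\sigma^{K-1}$, and the rough error $\lambda a$ is absorbed into the same margin by taking $\lambda_0 = \lambda_0(\alpha, K)$ small. The hypothesis $\alpha > \tfrac12$ provides the essential slack in this final step.

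The main obstacle is the iterated pigeonhole: the constant $C$ grows roughly like $(\log \sigma^{-1})^K$ after $K$ levels, and one must carefully track how the scale-discretization and rough errors compound to ensure property \textup{(iii)} survives each iteration. This is the technical heart of Zolotov's original argument \cite{Z}, and the present generalization requires only that $\lambda_0$ be taken small enough for each fixed $K$ and $\alpha$ to preserve his bookkeeping.
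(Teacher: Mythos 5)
Your proposal does not follow the paper's route, and I believe it has a genuine gap. The paper proves Theorem \ref{thm:discrete-rectifiability} in two stages: Proposition \ref{prop:1} extracts, via an intricate inductively-defined weighted sum $\cS^t$ over sliding index sets $P^t$, an $m$-point subset satisfying only the \emph{medial ordered} $\SRA(\theta)$ condition \eqref{eq:fosra}; then Ramsey's theorem for $3$-uniform hypergraphs (coloring triples according to whether the rough $\alpha$-self-contracting inequality holds) combined with Proposition \ref{prop:2} upgrades this to a $K$-point subset satisfying the full $\SRA(\alpha)$ condition via Lemma \ref{lem:elementary}. You instead aim to extract a near-ultrametric configuration: anchors $y_i$ with $d(y_i,y_j)\in[(1-\tau)R_i,R_i]$ for all $j>i$ and $R_{i+1}\le\sigma R_i$. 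This target is strictly stronger than $\SRA(\alpha)$, and that is where the argument breaks. Test it on $S=\{0,\tfrac1n,\ldots,1\}$ in $(\R,d_E^\beta)$ with $\beta$ close to $1$: this is a self-contracting ($\lambda=0$) set with $L(S)/D(S)=n^{1-\beta}\to\infty$, and the theorem holds for it (Theorem \ref{th:SRA-subsets-of-snowflake-spaces} supplies geometric sequences as $\SRA(\alpha)$ subsets), yet your scheme cannot. Indeed, in that space the two-sided constraint forces $\{y_{i+1},\ldots,y_K\}$ into an interval of Euclidean length about $\tfrac\tau\beta R_i^{1/\beta}$, whence $R_{i+1}\le(\tau/\beta)^\beta R_i$, so effectively $\sigma\le(\tau/\beta)^\beta$; your closing inequality $\tau\le\alpha(1-\tau)\sigma^{K-1}$ then requires $\tau^{1-\beta(K-1)}\le\alpha\beta^{-\beta(K-1)}$, which for $\beta(K-1)>1$ forces $\tau$ bounded \emph{below}, incompatible with the requirement $\sigma<1-\tau$. (Concretely, $\beta=0.9$, $K=4$ already fails.) The loss occurs because you bound the spread $|a-c|$ by the level-$i$ quantity $\tau R_i$ while bounding $b$ by the worst-case smallest scale $\sigma^{K-1}R_i$; in snowflake-type examples the spread for a specific triple is comparable to the short side of \emph{that} triple, not to a uniform level-$i$ error, and decoupling them is fatal. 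The Ramsey step in the paper exists precisely to avoid committing to such a rigid geometric template.

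Separately, the extraction itself is not justified. The pigeonhole over annuli measured from the terminal point $x_m$ controls $d(y_j,x_m)$, not $d(y_i,y_j)$; to convert this into the lower bound (iii) with small $\tau$ you would need the long sub-arc to lie in a \emph{deep} annulus (distance to the terminal point a small fraction of $R_i$), and nothing forces the excess length to concentrate there rather than in the outermost annuli, where the relative spread of $d(y_i,\cdot)$ is of order $1$. Finally, a small attribution point: the annular-pigeonhole scheme is not Zolotov's argument; his proof (which the paper generalizes) is the Ramsey-plus-weighted-sum one, where the constant $C$ arises from the geometric weight $\rho^{m-1-a}$ in \eqref{eq:rho} and the Ramsey number $R_3(p,K)$ in \eqref{eq:CFS}, not from iterating a pigeonhole $K$ times.
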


We now show how to reduce Theorem \ref{thm:rectifiability} to Theorem \ref{thm:discrete-rectifiability}.

\begin{proof}[Proof of Theorem \ref{thm:rectifiability}]
Assume that $(X,d)$ is an $\SRA(\alpha)$ free metric space, and fix an integer $K$ so that $X$ contains no $\SRA(\alpha)$ subset of cardinality $K$. Choose $\lambda_0$ as in the statement of Theorem \ref{thm:discrete-rectifiability} and assume that $\gamma$ is a bounded and unrectifiable rough $\lambda$-self-monotone curve $\gamma$ for some $-1\le\lambda<\lambda_0$. Observe that if a rough $\lambda$-self-monotone curve consists of a finite set of points, it is trivially rectifiable. Thus, without loss of generality, we may assume that the curve contains an infinite set of points. Let $C$ be the constant guaranteed by Theorem \ref{thm:discrete-rectifiability}. Since $\gamma$ is bounded and unrectifiable, we can choose a finite ordered set $S \subset \gamma$ so that $L(S) > C D(S)$. Then $S$ contains an $\SRA(\alpha)$ subset of cardinality $K$, but this contradicts the choice of $K$. 

We now turn to the proof of the final claim of the theorem. Assume that $\gamma \subset X$ is a bounded and rectifiable rough $\lambda$-self-monotone curve with $\lambda < \lambda_0(\alpha,K)$ where $K$ is as above. Then $\gamma$ contains no subset satisfying the $\SRA(\alpha)$ condition of cardinality $K+1$. It follows that
$$
L(S) \le C(\alpha,K+1) D(S)
$$
for every rough $\lambda$-self-expanding finite ordered subset $S \subset \gamma$. By Remark \ref{rem:continuous-to-discrete}, the length of $\gamma$ is at most $C(\alpha,K+1) \diam(\gamma)$.
\end{proof}

\begin{remark}
Note that the parameter $K$ depends on the metric space $(X,d)$. In \eqref{EFquantitative}, a bound for $K$ in the Euclidean case is given.
\end{remark}

We now discuss the relationship between the rough self-expanding and contracting conditions and the $\SRA$ condition in the setting of a finite ordered set $S$. Recall that the former conditions impose two restrictions on the mutual distances between three points, stated relative to the given ordering on $S$. In contrast, the $\SRA$ condition imposes restrictions on the mutual distances between three points in $S$, which must hold with respect to any permutation of the points. More precisely, if $x_1 < x_2 < x_3$ are points in $S$, then the rough $\lambda$-self-expanding condition asserts that
$$
d(x_1,x_2) \le d(x_1,x_3) + \lambda d(x_2,x_3)
$$
while the rough $\lambda$-self-contracting condition asserts that
$$
d(x_2,x_3) \le d(x_1,x_3) + \lambda d(x_1,x_3).
$$
On the other hand, in order to check the $\SRA(\alpha)$ condition for this particular triple of points, we must verify all of the following inequalities:
\begin{eqnarray*}
&d(x_1,x_2) \le \max\{ d(x_1,x_3) + \alpha d(x_2,x_3), \alpha d(x_1,x_3) + d(x_2,x_3) \} \\
&d(x_1,x_3) \le \max\{ d(x_1,x_2) + \alpha d(x_2,x_3), \alpha d(x_1,x_2) + d(x_2,x_3) \} \\
&d(x_2,x_3) \le \max\{ d(x_1,x_2) + \alpha d(x_1,x_3), \alpha d(x_1,x_2) + d(x_1,x_3) \}.
\end{eqnarray*}

We assume that $\lambda \le \alpha$. In order to derive the $\SRA(\alpha)$ conclusion for such a triple, selected from a set which is both rough $\lambda$-self-expanding and $\lambda$-self-contracting, it suffices to verify that either
\begin{equation}\label{eq:fosra}
d(x_1,x_3) \le d(x_1,x_2) + \alpha d(x_2,x_3)
\end{equation}
or
\begin{equation}\label{eq:bosra}
d(x_1,x_3) \le \alpha d(x_1,x_2) + d(x_2,x_3)
\end{equation}
holds true. To simplify matters we work consistently with \eqref{eq:fosra}.

\begin{definition}
A finite ordered set $S$ in a metric space $(X,d)$ is said to satisfy the {\em medial ordered $\SRA(\alpha)$ condition} if the inequality \eqref{eq:fosra} holds true for any choice of $x_1<x_2<x_3$ in $S$.
\end{definition}

We record an elementary lemma which formalizes the preceding discussion.

\begin{lemma}\label{lem:elementary}
If $S \subset (X,d)$ is a finite ordered set which is both rough $\lambda$-self-expanding and $\lambda$-self-contracting and which satisfies the medial ordered $\SRA(\lambda)$ condition, then $(F,d)$ satisfies the $\SRA(\lambda)$ condition.
\end{lemma}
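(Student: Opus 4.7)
The proof is essentially a direct verification that reduces to matching the three given hypotheses to the three symmetric inequalities required by the $\SRA(\lambda)$ condition on a single triple.

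I would fix an arbitrary triple $x_1<x_2<x_3$ in $S$ and observe that the $\SRA(\lambda)$ condition imposes three inequalities on it, indexed by which point plays the role of ``middle'' $z$ in \eqref{SRA}. Specifically, I need to verify
\begin{equation*}
d(x_1,x_2)\le\max\{d(x_1,x_3)+\lambda d(x_2,x_3),\ \lambda d(x_1,x_3)+d(x_2,x_3)\},
\end{equation*}
\begin{equation*}
d(x_1,x_3)\le\max\{d(x_1,x_2)+\lambda d(x_2,x_3),\ \lambda d(x_1,x_2)+d(x_2,x_3)\},
\end{equation*}
\begin{equation*}
d(x_2,x_3)\le\max\{d(x_1,x_2)+\lambda d(x_1,x_3),\ \lambda d(x_1,x_2)+d(x_1,x_3)\}.
\end{equation*}

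Each of these is witnessed by exactly one of the three hypotheses. The rough $\lambda$-self-expanding condition gives $d(x_1,x_2)\le d(x_1,x_3)+\lambda d(x_2,x_3)$, which is the first term in the max of the first inequality. The medial ordered $\SRA(\lambda)$ assumption \eqref{eq:fosra} gives $d(x_1,x_3)\le d(x_1,x_2)+\lambda d(x_2,x_3)$, matching the first term of the second inequality. Finally, the rough $\lambda$-self-contracting condition gives $d(x_2,x_3)\le d(x_1,x_3)+\lambda d(x_1,x_2)$, matching the second term of the third inequality. Since the triple $x_1<x_2<x_3$ was arbitrary, the $\SRA(\lambda)$ condition holds throughout $S$.

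There is no real obstacle here: the lemma is a bookkeeping statement designed precisely so that the three hypotheses assemble into the three symmetric inequalities of $\SRA(\lambda)$, with each hypothesis covering one of them. The only thing to be careful about is the matching of terms inside each max (in particular, noticing that the self-expanding inequality naturally fits the ``first'' term while the self-contracting inequality fits the ``second'' term, owing to the asymmetric roles of $x_1$ and $x_3$ as endpoints).
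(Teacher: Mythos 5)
Your proof is correct and is essentially the paper's own argument: the paper does not give a formal proof of this lemma but instead presents exactly this matching (self-expanding $\rightarrow$ first inequality, medial ordered $\SRA$ $\rightarrow$ second, self-contracting $\rightarrow$ third) in the discussion immediately preceding the lemma, which it says the lemma ``formalizes.'' Your observation that only three inequalities need checking (because the max expression is symmetric in $x$ and $y$) and your care about which term of each max is witnessed are both accurate.
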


We will reduce the proof of Theorem \ref{thm:discrete-rectifiability} to the following two technical propositions. 
\begin{proposition}\label{prop:1}
Let $0<\theta<1$ and $m \in \N$. Then there exist $\lambda_1 = \lambda_1(\theta,m)>0$ and $C = C(\theta,m)>0$ so that if $S=\{x_1<x_2<\cdots<x_n\}$ is a finite, ordered, rough $\lambda$-self-monotone set, $-1\le \lambda < \lambda_1$,
with $L(S) > C D(S)$, then $n\ge m$ and there exists $F = \{x_{i_1}<\cdots<x_{i_m}\} \subset S$ so that $(F,d)$ satisfies the medial ordered $\SRA(\theta)$ condition.
\end{proposition}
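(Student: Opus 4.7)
The plan is to prove Proposition \ref{prop:1} by induction on $m$, exploiting the hypothesis $L(S) > C D(S)$ to extract increasingly structured subsequences. I will treat the rough $\lambda$-self-expanding case explicitly; the rough $\lambda$-self-contracting case proceeds analogously (with care about the asymmetry between the medial $\SRA$ condition and time reversal). Under the self-expanding hypothesis with $\lambda$ sufficiently small, the function $k \mapsto d(x_1, x_k)$ is $\lambda$-approximately non-decreasing on $[0, D(S)]$.

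For the base case $m = 3$, I argue by contradiction. Suppose no triple of the form $(x_1, x_{k-1}, x_k)$, $k = 2, \ldots, n$, satisfies the medial $\SRA(\theta)$ condition, so that $d(x_1, x_k) > d(x_1, x_{k-1}) + \theta d(x_{k-1}, x_k)$ for every such $k$. Telescoping these inequalities yields
$$D(S) = d(x_1, x_n) > \theta \sum_{k=2}^{n} d(x_{k-1}, x_k) = \theta L(S),$$
which contradicts $L(S) > D(S)/\theta$. The base case therefore holds with $C_3 = 1/\theta$, and is independent of the self-monotone parameter $\lambda$.

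For the inductive step from $m$ to $m+1$, I would fix a small parameter $\tau = \tau(\theta) \in (0, 1)$ and split $S$ at the first index $p$ with $d(x_1, x_p) \ge (1-\tau) D(S)$. Setting $S^{(2)} = \{x_p, x_{p+1}, \ldots, x_n\}$, a sufficiently large choice of $C = C(\theta, m+1)$ ensures that the tail $S^{(2)}$ retains a length-to-diameter ratio of at least $C_m$, so the inductive hypothesis applies and produces $m$ points $y_2 < \cdots < y_{m+1}$ in $S^{(2)}$ satisfying medial $\SRA(\theta)$. Setting $y_1 = x_1$ gives the candidate $(m+1)$-point configuration.

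The main obstacle will be verifying medial $\SRA(\theta)$ for the new triples $(y_1, y_j, y_k)$, $2 \le j < k \le m+1$. The self-expanding property guarantees $d(y_1, y_\ell) \in [(1-\tau) D(S), D(S)]$ for $\ell \ge 2$, so the condition reduces to the lower bound $d(y_j, y_k) \ge \tau D(S)/\theta$. This bound does not follow directly from the inductive hypothesis, which controls only the medial $\SRA(\theta)$ inequalities internal to $S^{(2)}$. To surmount this, I anticipate strengthening the inductive statement to also provide a lower bound on consecutive distances of the chosen subset (proportional to the discrete diameter), or refining the splitting into several intermediate scales $\tau_1 > \tau_2 > \cdots$ so that the $y_j$ are picked at well-separated distances from $x_1$. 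The resulting coupling between $\tau$, $\theta$, $\lambda$, and the accumulated constants leads me to expect $C(\theta, m)$ growing roughly like $\theta^{-O(m)}$ and $\lambda_1(\theta, m)$ decaying at a comparable rate.
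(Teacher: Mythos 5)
Your base case $m=3$ is correct: negating the medial $\SRA(\theta)$ condition for every triple $(x_1,x_{k-1},x_k)$ and telescoping does give $D(S)>\theta L(S)$, so $C_3=1/\theta$ works. But the inductive step is where the whole difficulty of the proposition lives, and it is not carried out. Two of its ingredients fail. First, the claim that a large enough $C(\theta,m+1)$ forces the tail $S^{(2)}=\{x_p,\ldots,x_n\}$ to inherit a length-to-diameter ratio of at least $C_m$ is false: the excess length $L(S)-D(S)$ can be concentrated entirely in the head. A set can oscillate arbitrarily many times inside $B(x_1,(1-\tau)D(S))$ and then proceed essentially monotonically to $x_n$, so that $L(S^{(2)})/D(S^{(2)})$ is bounded while $L(S)/D(S)$ is huge; no choice of $C$ repairs this, and switching to the head creates the symmetric problem at the other endpoint. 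Second, even when the inductive hypothesis does apply to one half, the triples $(y_1,y_j,y_k)$ spanning the split require $d(y_1,y_k)-d(y_1,y_j)\le \theta\, d(y_j,y_k)$ with $\theta<1$, i.e., an exclusion of near-collinearity in that specific order. The triangle inequality gives only the factor $1$, the rough self-monotone conditions control only the other orderings, and no separation of the scales $\tau_1>\tau_2>\cdots$ can prevent $y_j$ from sitting almost on the segment from $y_1$ to $y_k$. You acknowledge this obstacle yourself and propose to strengthen the inductive statement, but the strengthened statement is neither formulated nor proved, so the argument as written does not establish the proposition.

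For comparison, the paper does not induct on $m$ at all. It assumes that \emph{every} $m$-element subset fails the medial ordered $\SRA(\theta)$ condition, uses each failure to evolve an $m$-element index set $P^t\subset\{1,\ldots,n\}$ (deleting the indices between the offending $j^t$ and $k^t$ and appending fresh indices to the right of $p_m^t$), and tracks the weighted potential $\cS^t=\sum_{a<b}\rho^{m-1-a}d(x_{p_a^t},x_{p_b^t})$ with a geometric weight $\rho=\rho(\theta,m)$. The failure inequality \eqref{eq:contradiction-hypothesis} makes $\cS^t$ almost non-decreasing while absorbing, at each step, the length of the newly traversed portion of $S$; summing over $t$ yields $L(S)\le C(\theta,m)D(S)$, contradicting the hypothesis. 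This global amortized bookkeeping is precisely what substitutes for the missing localization in your approach: it never needs the winding to sit in any particular piece of $S$, and it never needs to certify new cross-triples, because the contradiction hypothesis is re-invoked at every stage on the current $m$-element set.
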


\begin{proposition}\label{prop:2}
Let $0<\theta<1$ and let $M>1$. If
\begin{equation}\label{eq:alpha-lower-bound}
\alpha > \left( 1 - \frac1M \right) \frac{1+\theta}{1-\theta},
\end{equation}
then there exists an integer $p = p(\theta,M,\alpha)$ so that if $(Z,d)$, $Z = \{ z_1 < \cdots < z_p \}$, is a finite, ordered, $M$-bounded turning set satisfying the medial ordered $\SRA(\theta)$ condition, then 
\begin{equation}\label{eq:zzz}
d(z_{i+1},z_p) < d(z_i,z_p) + \alpha d(z_i,z_{i+1})
\end{equation}
for some $i$, $1\le i < p-1$.
\end{proposition}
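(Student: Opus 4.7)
The plan is to argue by contradiction. Set $a_i := d(z_i, z_p)$ for $1 \le i \le p$, so $a_p = 0$ while $a_1 = d(z_1,z_p) > 0$. If the conclusion of Proposition \ref{prop:2} fails, then
\[
a_{i+1} - a_i \ge \alpha \, d(z_i, z_{i+1}) \qquad \text{for every } 1 \le i \le p-2,
\]
and in particular the sequence $(a_i)_{i=1}^{p-1}$ is strictly increasing.

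The first step is to apply the medial ordered $\SRA(\theta)$ condition to each ordered triple $z_i < z_{i+1} < z_p$, which yields the lower bound $d(z_i, z_{i+1}) \ge a_i - \theta \, a_{i+1}$. Combining this with the negated conclusion produces a geometric growth estimate for $(a_i)$, as follows. If $a_i - \theta a_{i+1} \ge 0$, substituting the lower bound for $d(z_i,z_{i+1})$ into the negated conclusion and rearranging gives $(1+\alpha) a_i \le (1+\alpha\theta) a_{i+1}$, i.e., $a_{i+1}/a_i \ge c$ where $c := (1+\alpha)/(1+\alpha\theta)$; otherwise $a_{i+1}/a_i > 1/\theta \ge c$, the last inequality being a direct consequence of $\theta \le 1$. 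Thus $a_{i+1} \ge c \, a_i$ in all cases, and iterating yields
\[
a_{p-1} \ge c^{\,p-2}\, a_1.
\]

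The second step is to invoke the $M$-bounded turning hypothesis on the extreme pair $(z_1, z_p)$: since $Z$ is the ordered arc joining $z_1$ to $z_p$, we have $a_{p-1} = d(z_{p-1}, z_p) \le \diam(Z) \le M\, d(z_1, z_p) = M a_1$. Comparing with the geometric estimate forces $c^{\,p-2} \le M$. Since $0 < \theta < 1$ and the quantitative hypothesis $\alpha > (1 - 1/M)(1+\theta)/(1-\theta)$ guarantees in particular that $\alpha > 0$, a direct calculation gives $c > 1$; therefore any integer $p = p(\theta, M, \alpha)$ strictly larger than $2 + \log M / \log c$ violates $c^{\,p-2} \le M$ and produces the desired contradiction.

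The main technical step is the derivation of the geometric growth estimate $a_{i+1} \ge c\, a_i$: it requires playing the lower bound on consecutive gaps supplied by the medial ordered $\SRA(\theta)$ condition against the upper bound on those gaps induced by the negated conclusion, and then handling the degenerate subcase where the medial lower bound becomes vacuous (which is precisely where the auxiliary inequality $1/\theta \ge c$ is needed). The $M$-bounded turning hypothesis then simply translates ``the sequence $(a_i)_{i=1}^{p-1}$ cannot grow without bound'' into the quantitative clash $c^{\,p-2} \le M$, and this clash selects the required threshold $p$.
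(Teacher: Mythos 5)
Your proof is correct, but it takes a genuinely different route from the paper. The paper also argues by contradiction, but it telescopes the negated inequalities \eqref{eq:zzz} to get $d(z_1,z_p)+\alpha\sum_{i=1}^{p-2}d(z_i,z_{i+1})\le d(z_{p-1},z_p)$, then proves by induction (using the medial ordered $\SRA(\theta)$ condition) the geometric lower bound $d(z_i,z_{i+1})\ge(\tfrac{1-\theta}{2})^{p-i-1}d(z_{p-1},z_p)$ on the consecutive gaps, and finally invokes the bounded turning bound $d(z_{p-1},z_p)\le M d(z_1,z_p)$; the hypothesis \eqref{eq:alpha-lower-bound} is needed at the last step to make $1-(\tfrac{1-\theta}{2})^{p-2}\le\tfrac{1+\theta}{1-\theta}(1-\tfrac1M)\tfrac1\alpha<1$ produce a finite bound on $p$. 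You instead extract a per-step multiplicative growth $a_{i+1}\ge c\,a_i$ for the distances $a_i=d(z_i,z_p)$ to the terminal point, with $c=(1+\alpha)/(1+\alpha\theta)$, by playing \eqref{eq:fosra} against the negated \eqref{eq:zzz} and handling the degenerate case $a_i<\theta a_{i+1}$ separately; the same bounded turning bound then caps the growth. Your argument is shorter, avoids the inductive gap estimate, and gives a different explicit threshold $p>2+\log M/\log c$ that still depends only on $(\theta,M,\alpha)$, which is all that the proof of Theorem \ref{thm:discrete-rectifiability} requires. Notably, your derivation uses only $\alpha>0$ (together with $0<\theta<1$ and $M>1$), so it establishes the conclusion under a hypothesis strictly weaker than \eqref{eq:alpha-lower-bound}; the paper's method, by contrast, genuinely needs \eqref{eq:alpha-lower-bound} to close. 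This does not immediately improve the range of $\alpha$ in the main rectifiability theorem, since other constraints there (e.g.\ the choice of $\theta<\alpha$ and the condition $\lambda_0<\tfrac13(2\alpha-1)$) still force $\alpha>\tfrac12$, but it is a cleaner and slightly stronger version of this step.
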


We now show how to derive Theorem \ref{thm:discrete-rectifiability} as a consequence of Propositions \ref{prop:1} and \ref{prop:2}. From now on, without loss of generality, we assume that our curve or set is roughly $\lambda$-self-expanding. The proof makes use of Ramsey's theorem for $3$-uniform hypergraphs. The relevant Ramsey number $R_3(p,K)$ is the least integer $m$ such that for any $2$-coloring of the set of all unordered triples of elements in a set $S$ of $m$ elements, there exists either a subset $T_1$ of $S$ of size $p$ such that all unordered triples formed by elements of $T_1$ are monochromatic in the first color, or a subset $T_2$ of $S$ of size $K$ such that all unordered triples formed by elements of  $T_2$ are monochromatic in the second color. 

The upper bound
\begin{equation}\label{eq:CFS}
R_3(p,K) \le \exp(c K^{p-2} \log K),
\end{equation}
where $c$ is an absolute constant, is due to Conlon--Fox--Sudakov \cite{CFS}, see also Sudakov's 2010 ICM proceedings article \cite{S}. For our purposes, it is enough to know that $R_3(p,K)$ is finite, although we do make use of \eqref{eq:CFS} to give an explicit upper bound for the parameter $\lambda_0$ in Theorem \ref{thm:rectifiability}.

\begin{proof}[Proof of Theorem \ref{thm:discrete-rectifiability}]
Let $\tfrac12 < \alpha < 1$ and $K \in \N$ be given. The value $\lambda_0$ will satisfy several constraints, the first of which is
\begin{equation}\label{eq:lambda-first}
\lambda_0 < \frac13(2\alpha-1).
\end{equation}
See also \eqref{eq:lambda-second}. We note that \eqref{eq:lambda-first} is equivalent to
$$
2 \frac{1+\lambda_0}{1-\lambda_0} \le \frac{1+\alpha}{1-\alpha/2} =: M_*.
$$
If $\gamma$ is a rough $\lambda$-self-expanding set with $\lambda \le \lambda_0$, then $\gamma$ is $M_\lambda$-bounded turning with $M_\lambda$ as in Lemma \ref{Lemma:expcurveBT}. Moreover,
$$
M_\lambda \le 2 \frac{1+\lambda_0}{1-\lambda_0} \le M_*.
$$
Furthermore,
$$
1 - \frac1{M_*} = \frac{\tfrac32\alpha}{1+\alpha}
$$
and
$$
\alpha > 1 - \frac1{M_*}
$$
since $\alpha>\tfrac12$. Choose $\theta = \theta(\alpha)$ with $0<\theta<\alpha$ so that
$$
\alpha > \left( 1 - \frac1{M_*} \right) \frac{1+\theta}{1-\theta}.
$$
Choose an integer $p$ as in Proposition \ref{prop:2}. Since both $M_*$ and $\theta$ here have been chosen to depend only on $\alpha$, we have $p = p(\alpha)$.

We next appeal to the two parameter Ramsey's theorem for $3$-uniform hypergraphs, and choose an integer $m$ so that if the collection of all triples in $\{1,\ldots,m\}$ is $2$-colored, then either there exists a subset $A \subset \{1,\ldots,m\}$ of cardinality $K$ so that all triples in $A$ are colored red, or there exists a subset $B \subset \{1,\ldots,m\}$ of cardinality $p$ so that all triples in $B$ are colored blue. By \eqref{eq:CFS}, 
$$
m \le m(\alpha,K) = \exp(c K^{p(\alpha)-2} \log K)
$$
for some absolute constant $c$.

By Proposition \ref{prop:1}, choose constants $\lambda_1$ and $C$ so that the stated conclusion holds true. In the statement of that proposition, $\lambda_1$ and $C$ depend on $\theta$ and $m$; tracing the dependence above we see that in the context of this proof we have $\lambda_1$ and $C$ eventually depend on $\alpha$ and $K$. We impose the second assumption
\begin{equation}\label{eq:lambda-second}
\lambda_0 \le \lambda_1(\theta(\alpha),m(\alpha,K))
\end{equation}
and we assume that a choice of $\lambda_0$ has been made so that both \eqref{eq:lambda-first} and \eqref{eq:lambda-second} are satisfied.

Assume now that $(\gamma,d)$ is a rough $\lambda$-self-expanding finite ordered set of cardinality $n$ so that $L(\gamma) > C D(\gamma)$. Proposition \ref{prop:1} implies that $n \ge m$ and we may choose a subset $F = \{x_{i_1}< \cdots < x_{i_m}\} \subset \gamma$ of cardinality $m$ so that $(F,d)$ satisfies the medial ordered $\SRA(\theta)$ condition. We identify $F$ with the index set $\{1,\ldots,m\}$ and we color the set of all triples $1\le i<j<k\le m$ as follows:
\begin{itemize}
\item Triple $i<j<k$ is colored red if $d(x_j,x_k) \le d(x_i,x_k) + \alpha d(x_i,x_j)$.
\item Triple $i<j<k$ is colored blue if $d(x_j,x_k) \ge d(x_i,x_k) + \alpha d(x_i,x_j)$.
\end{itemize}
Since $\gamma$ is rough $\lambda$-self-expanding and $\lambda < \lambda_0$, $(\gamma,d)$ is $M_*$-bounded turning. The Ramsey-type conclusion above tells us that one of the following statements is valid:
\begin{itemize}
\item[(i)] There exists a subset of $F$ of cardinality $K$ in which all triples are colored red.
\item[(ii)] There exists a subset of $F$ of cardinality $p$ in which all triples are colored blue.
\end{itemize}
Observe that if (ii) is satisfied, then we have found a $p$ element subset $\{x_{j_1},\ldots,x_{j_p}\}$ of $F$ which is a bounded turning set, satisfies the medial ordered $\SRA(\theta)$ condition, and for which the inequality $d(x_{j+1},x_p) \ge d(x_j,x_p) + \alpha d(x_j,x_{j+1})$ holds for all $1\le j < p$. This contradicts the conclusion which we obtain from Proposition \ref{prop:2}, namely, that for any subset of $\gamma$ of cardinality $p$ there exists an index $i$ so that \eqref{eq:zzz} holds.

Hence (i) must be satisfied. Let $A \subset F$ with $\#A = k$ have the property that all triples in $A$ are colored red. Then
\begin{itemize}
\item $(A,d)$ is rough $\alpha$-self-expanding (since $(\gamma,d)$ is rough $\lambda$-self-expanding and $\lambda < \alpha$),
\item $(A,d)$ satisfies the medial ordered $\SRA(\alpha)$ condition (since $\theta<\alpha$), and
\item $(A,d)$ is rough $\alpha$-self-contracting (since all triples in $A$ are colored red).
\end{itemize}
By Lemma \ref{lem:elementary}, $(A,d)$ satisfies the $\SRA(\alpha)$ condition.
\end{proof}

It remains to verify the two technical propositions.

\begin{proof}[Proof of Proposition \ref{prop:2}]
Fix $\theta$, $M$, and $\alpha$ as in the statement. Assume that $(Z,d)$ is a finite, ordered, $M$-bounded turning set satisfying the medial ordered $\SRA(\theta)$ condition, and write $Z = \{ z_1 < \cdots < z_p \}$. We argue by contradiction, so assume that for every choice of $i$, $1\le i < p-1$, the inequality
\begin{equation}\label{eq:2-1}
d(z_{i+1},z_p) \ge d(z_i,z_p) + \alpha d(z_i,z_{i+1})
\end{equation}
holds true. We will find an upper bound for $p$ depending on the data $\theta,M,\alpha$.

The $M$-bounded turning condition implies that $d(z_{p-1},z_p) \le M d(z_1,z_p)$. By induction, and using the medial ordered $\SRA(\theta)$ condition, we deduce that
\begin{equation}\label{eq:2-2}
d(z_i,z_{i+1}) \ge \left( \frac{1-\theta}2 \right)^{p-i-1} d(z_{p-1},z_p) \quad \forall \, 1 \le i < p.
\end{equation}
Summing \eqref{eq:2-1} over this range of indexes gives
$$
d(z_1,z_p) + \alpha \sum_{i=1}^{p-2} d(z_i,z_{i+1}) \le d(z_{p-1},z_p)
$$
and hence
$$
d(z_1,z_p) \le d(z_{p-1},z_p) \left( 1 - \alpha \sum_{i=1}^{p-2} \left( \frac{1-\theta}2 \right)^{p-1-i} \right).
$$
Thus
$$
\alpha \sum_{i=1}^{p-2} \left( \frac{1-\theta}2 \right)^{p-1-i} \le 1 - \frac1M
$$
and so
$$
1 - \left( \frac{1-\theta}2 \right)^{p-2} \le \frac{1+\theta}{1-\theta} \, \left( 1 - \frac1M \right) \, \frac1\alpha < 1.
$$
We conclude that
$$
p \le 2 + \frac{-\log\bigl(1-\tfrac{1+\theta}{1-\theta} \, \left( 1 - \tfrac1M \right) \, \tfrac1\alpha \bigr)}{\log(2/(1-\theta))},
$$
where the upper bound depends only on $\theta$, $M$, and $\alpha$.
\end{proof}

Finally, we turn to the proof of Proposition \ref{prop:1}. This proof is long and technical, and we will break the argument into intermediate steps which will be formulated as individual lemmas. The overall structure of the proof is by contradiction. Assuming that the conclusion does not hold, we define inductively a sequence of $m$-point subsets of $\{1,\ldots,n\}$ for which the $m$-tuples of indexing integers are coordinate-wise non-decreasing. By assumption, the stated conclusion does not hold at any stage of this construction. We continue until the final ($m$th) entry of the sequence is sufficiently close to $n$. Along the way we introduce a suitable weighted sum of entries, which we show to be roughly non-decreasing with respect to the inductive parameter. Once the process stops, we compare the resulting quantity with the discrete length $L(S)$ and diameter $D(S)$ of the original set, and observe that the estimate which we obtain is incompatible with the initial assumption that $L(S) > C D(S)$, provided $C=C(\lambda,m)$ is sufficiently large. This yields the desired contradiction, and thereby completes the proof of the proposition. 

\begin{proof}[Proof of Proposition \ref{prop:1}]
We will prove the result by contradiction, and the necessary constants $\lambda_1(\theta,m)$ and $C(\theta,m)$ will be determined at the conclusion of the proof, see \eqref{eq:lambda-nought} and \eqref{eq:C}. Let $S=\{x_1<\cdots<x_n\}$ be a finite, ordered, rough $\lambda$-self-expanding set with $L(S) > C D(S)$ and $\lambda \le \lambda_1$. We first show that if the constant $C$ is chosen sufficiently large (in terms of $m$ and $\theta$), then $n \ge m$. Indeed, using the $M_{\lambda_1}$-bounded turning property of $S$, we find that $d(x_i,x_{i+1}) \le M_{\lambda_1} d(x_1,x_n)$ for each $i=0,\ldots,n-1$. Hence
$$
C d(x_1,x_n) = C D(S) < L(S) = \sum_i d(x_i,x_{i+1}) \le M_{\lambda_1} (n-1) d(x_1,x_n).
$$
Provided we choose 
\begin{equation}\label{eq:C0}
C \ge M_{\lambda_1} (m-1),
\end{equation}
it follows that $n \ge m$ as desired. Note that the actual choice of $C(\theta,m)$ which we make in \eqref{eq:C} satisfies \eqref{eq:C0}. Moreover, the value for $\lambda_1$ which we determine in \eqref{eq:lambda-nought} satisfies $\lambda_1 \le \tfrac12$, whence
\begin{equation}\label{eq:Mlambda1}
M_{\lambda_1} \le 6.
\end{equation}

We now start the proof by contradiction, and we suppose that every $m$-element subset $E \subset S$ fails to satisfy the medial ordered $\SRA(\theta)$ condition. For each such subset $E$, choose indexes $i_E<j_E<k_E$ from the set $\{1,\ldots,n\}$ so that
\begin{equation}\label{eq:contradiction-hypothesis}
d(x_{i_E},x_{k_E}) > d(x_{i_E},x_{j_E}) + \theta \, d(x_{j_E},x_{k_E}).
\end{equation}
We define inductively a sequence of $m$-element subsets $P^1,P^2,\ldots,P^T$ of $\{1,\ldots,n\}$. For each $t$, let $E^t$ be the corresponding subset of $S$, i.e., $E^t = \{x_{i}:i \in P^t\}$.

\begin{remark}\label{rem:rem1}
The number $T$ of subsets in the sequence which we will define is on the order of $n/m$, see Remark \ref{rem:rem2}. As a result, we need to make sure as the proof continues that our eventual choice of the comparison constant $C$ does {\bf not} depend on $T$.
\end{remark}

First, we set
$$
P^1 = \{1,\ldots,m\}.
$$
Next, assume that $P^t$ has been defined, with $P^t = \{p_1^t,\ldots,p_m^t\} \subset \{1,\ldots,n\}$. Choose indexes
$$
i^t:= i_{P^t} < j^t:=j_{P^t} < k^t:=k_{P^t}
$$
satisfying
$$
d(x_{i^t},x_{k^t}) > d(x_{i^t},x_{j^t}) + \theta \, d(x_{j^t},x_{k^t}).
$$
The successor set $P^{t+1}$ will be defined by removing all elements of $P^t$ between $j^t$ and $k^t-1$ and adding in an equal number of elements immediately to the right of $p_m^t$. More precisely, let
$$
d_t:= \# ( P^t \cap \{j^t,j^t+1,\ldots,k^t-1\} ).
$$
We consider two cases.
\begin{itemize}
\item[Case 1:] $d_t+p^t_m> n$. In this case we stop the inductive process, and we set $T = t$.
\item[Case 2:] $d_t+p^t_m \le n$. In this case, we set
$$
P^{t+1} = P^t \setminus \{j^t,j^t+1,\ldots,k^t-1\} \cup \{p_m^t +1,\ldots,p_m^t + d_t \}.
$$
\end{itemize}
See Figure \ref{fig:linept} for a graphical visualization of this process.

\begin{figure}[h]
\begin{center}
\includegraphics[width=12cm]{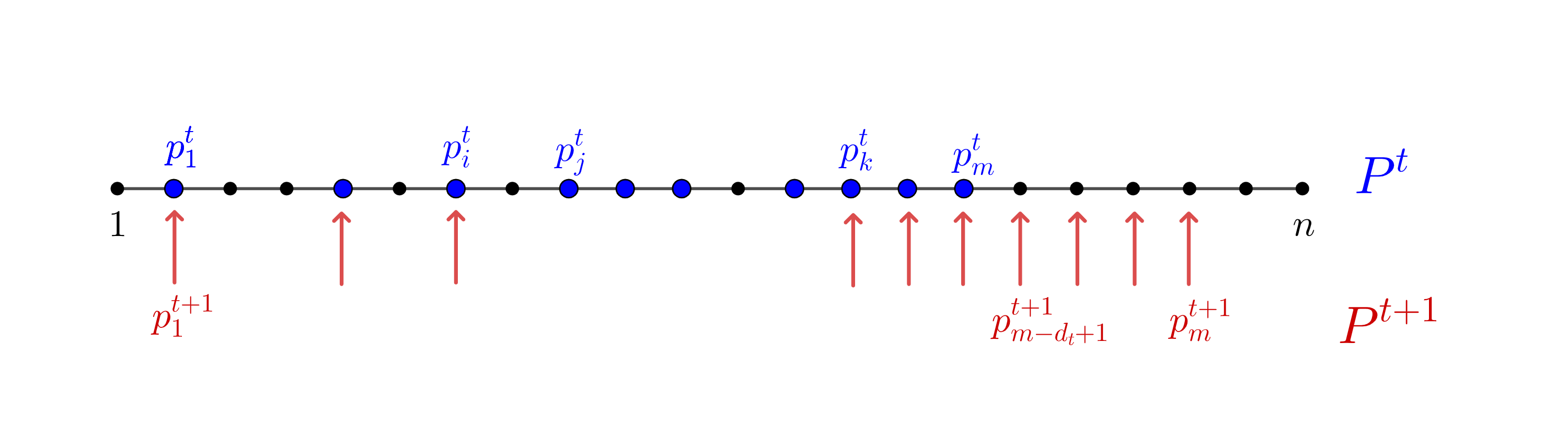}
\caption{Distribution of elements in $P^t$ and $P^{t+1}$}\label{fig:linept}
\end{center}
\end{figure}

\begin{remark}\label{rem:rem2}
It is clear from the construction that the sequence $(p_k^t:t=1,2,\ldots)$ is nondecreasing for each $k=1,\ldots,m$. Moreover, for all choices of $t$ we have $p_m^{t+1} = p_m^t + d_t$, whence
$$
p_m^T = m + \sum_{s=1}^{T-1} d_s.
$$
Since $1 \le d_t \le m-1$ for all $t$ we conclude that $p_m^T \le mT+(T-1)$. On the other hand, the termination condition (Case 1) is $p_m^T > n-d_T \ge n-m$. Hence $n-m\le mT+T-1$ or $T \ge \tfrac{n-m+1}{m+1}$.
\end{remark}

Next, we fix the weighting factor
\begin{equation}\label{eq:rho}
\rho := 6 \, \frac{m(m+1)}{2\theta} > 1,
\end{equation}
where the coefficient $6$ has been determined by the bound in \eqref{eq:Mlambda1}.

For each $t=1,\ldots,T$, define the following weighted sum:
$$
\cS^t := \sum_{\substack{(a,b) \\ 1 \le a<b \le m}} \rho^{m-1-a} d(x_{p_a^t},x_{p_b^t}).
$$
The sum in question is taken over two parameters $a$ and $b$ satisfying $1\le a<b\le m$. In what follows, we omit the explicit reference to the pair $(a,b)$ in later instances of such sums.

\begin{lemma}\label{lem:prop1lemma}
For each $t$,
\begin{equation}\label{eq:prop1lemma}
\cS^t \le \cS^{t+1} - \sum_{u=p_m^t}^{p_m^{t+1}-1} d(x_u,x_{u+1}) + C_1(m,\theta) \, \lambda \, \sum_{b=1}^m d(x_{p_b^t},x_{p_b^{t+1}})
\end{equation}
for some constant $C_1(m,\theta)$.
\end{lemma}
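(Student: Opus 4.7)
The strategy is a careful pairwise accounting of the difference
$$\cS^{t+1} - \cS^t = \sum_{1 \le a < b \le m} \rho^{m-1-a} [d(z_a, z_b) - d(y_a, y_b)],$$
matching elements of $P^t$ with those of $P^{t+1}$ structurally. Let $\ell$ denote the position in $P^t$ with $p_\ell^t = j^t$ (so $p_{\ell + d}^t = k^t$), and let $\ell' < \ell$ be the position with $p_{\ell'}^t = i^t$. Then in $P^{t+1}$, positions $1, \ldots, \ell - 1$ carry the same elements as in $P^t$; positions $\ell, \ldots, m - d$ carry the shifted-kept elements $y_{\ell + d}, \ldots, y_m$ (shifted down in index by exactly $d$); and positions $m - d + 1, \ldots, m$ carry the $d$ new tail elements at global positions $p_m^t + 1, \ldots, p_m^{t+1}$.

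I would then organize $\cS^{t+1} - \cS^t$ into three parts. \emph{First}, the contribution of pairs of elements common to both $P^t$ and $P^{t+1}$ is nonnegative: pairs in the unchanged block contribute zero, pairs of a first-unchanged element with a shifted-kept element also contribute zero (same weight, same distance), and pairs of two shifted-kept elements acquire a factor $\rho^d$ in $\cS^{t+1}$ (the first index decreases by $d$), yielding a positive $(\rho^d - 1)$-boost on those distances. \emph{Second}, the new-pair contribution includes in particular the consecutive new-element distances: the pairs $\{x_{p_m^t + s}, x_{p_m^t + s + 1}\}$ for $s = 0, \ldots, d - 1$ enter $\cS^{t+1}$ with weights $\rho^{d - 1 - s} \ge 1$, so their joint contribution is at least $\sum_u d(x_u, x_{u+1})$. \emph{Third}, the removed-pair loss (pairs involving at least one removed element, which appear in $\cS^t$ and not in $\cS^{t+1}$) must be dominated by the new-pair gain modulo a $\lambda$-error.

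For this last step I would pair each removed pair bijectively with a corresponding new pair and invoke rough $\lambda$-self-expanding: for global positions $p < q < r$, $d(x_p, x_q) \le d(x_p, x_r) + \lambda d(x_q, x_r)$. This lets a distance from an element of the unchanged or shifted-kept block to a removed element $y_{b'}$ be bounded above by the distance from the same element to a further-right new element $z_b$, at the cost of a $\lambda$-error of the form $\lambda \, d(y_{b'}, z_b)$. For the single critical pair $(\ell', \ell)$, the SRA-failure inequality
$$d(y_{\ell'}, y_{\ell + d}) > d(y_{\ell'}, y_\ell) + \theta \, d(y_\ell, y_{\ell + d})$$
furnishes the additional positive gain $\rho^{m - 1 - \ell'} \theta \, d(y_\ell, y_{\ell + d})$, which, combined with the $(\rho^d - 1)$-boost on shifted-kept pairs and the calibrated choice $\rho = 6m(m+1)/(2\theta)$, covers the residual triangle-inequality slack from the remaining cross-pair comparisons.

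The main obstacle is the final consolidation of the error. The cross-distances $d(y_{b'}, z_b)$ appearing in the $\lambda$-errors are not of the form $d(y_b, z_b)$, so they must be bounded by a constant multiple of $\sum_b d(y_b, z_b)$. I would achieve this by repeated triangle inequalities together with the $M_\lambda$-bounded turning property from Lemma \ref{Lemma:expcurveBT} (with $M_\lambda \le 6$ since $\lambda \le \lambda_1 \le \tfrac12$), producing the constant $C_1 = C_1(m, \theta)$ of the form $\rho \cdot \mathrm{poly}(m) \cdot M_\lambda$. Keeping the accounting tight enough that this consolidation actually closes — in particular making sure no cross-distance is charged to a $d(y_b, z_b)$ term more than a bounded number of times — is the main technical point.
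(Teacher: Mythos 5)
Your outline reproduces the substance of the paper's argument (a telescoping comparison of $\cS^t$ with $\cS^{t+1}$ driven by three mechanisms: the rough $\lambda$-self-expanding inequality to push the later point of a pair rightward at cost $\lambda$, the SRA-failure inequality at the critical pair to harvest a gain $\theta\rho^{m-1-\hat a}d(x_{j^t},x_{k^t})$, and bounded turning plus the calibrated $\rho$ to absorb the remaining losses), but with element-based rather than position-based bookkeeping: the paper instead splits $\cS^t$ into $\bI^t+\bI\bI^t+\bI\bI\bI^t$ according to whether $p_a^t<j^t$, $=j^t$, or $>j^t$, and compares each term with the sum over the \emph{same positions} in $P^{t+1}$. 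One payoff of the paper's choice that you lose is that its $\lambda$-errors come out directly in the form $\lambda\, d(x_{p_b^t},x_{p_b^{t+1}})$, so the consolidation step you flag as the main technical point simply does not arise there; your cross-distances $d(y_{b'},z_b)$ can indeed be controlled by chaining $p_b^t\to p_b^{t+1}=p_{b+d}^t\to\cdots$ and bounded turning in the new block, but this is extra work.

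The one place where your sketch, taken literally, would not close is the treatment of removed pairs whose \emph{earlier} element is itself removed, i.e.\ pairs $(y_a,y_b)$ with $p_a^t\in[j^t,k^t)$. The self-expanding inequality only lets you move the later point of a pair to the right, so your bijective matching mechanism does not apply to these; they must be killed by bounded turning, writing $d(y_a,y_b)\le M_\lambda d(x_{j^t},x_{k^t})+d(x_{k^t},y_b)$ when $p_b^t>k^t$. The $M_\lambda d(x_{j^t},x_{k^t})$ portions are absorbed by the $\theta$-gain as you say, but for the $d(x_{k^t},y_b)$ portions the resources you name are insufficient: summing the weights $\rho^{m-1-a}$ over $a\in[\hat{\hat a},\hat{\hat a}+d)$ gives a loss of $\rho^{m-1-\hat{\hat a}}(1-\rho^{-d})(1-\rho^{-1})^{-1}d(x_{k^t},y_b)$, while the $(\rho^d-1)$-boost on the common pair $(x_{k^t},y_b)$ only supplies $\rho^{m-1-\hat{\hat a}}(1-\rho^{-d})d(x_{k^t},y_b)$, leaving a deficit of order $\rho^{m-2-\hat{\hat a}}M_\lambda d(x_{k^t},x_{p_m^t+1})$ per $b$. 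Covering it requires additionally reserving the new-pair gain $\rho^{m-1-\hat{\hat a}}d(x_{k^t},x_{p_m^t+1})$ coming from the pair $(x_{k^t},x_{p_m^t+1})$ in $\cS^{t+1}$ --- this is exactly the paper's remainder term $\cR_2$ from Lemma \ref{lem:prop1lemmaII} --- and, when $k^t=p_m^t$, splitting into cases so that this same pair is not charged both here and in your lower bound for the consecutive new-element sum $\sum_u d(x_u,x_{u+1})$ (the paper's cases (i) and (ii)). With that resource and case distinction added, your accounting closes and matches the paper's.
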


Assuming temporarily that Lemma \ref{lem:prop1lemma} is true, we complete the proof of the proposition. Summing \eqref{eq:prop1lemma} from $t=1$ to $t=T-1$ yields
$$
\cS^1 \le \cS^T - \sum_{u=m}^{p_m^T-1} d(x_u,x_{u+1}) + C_1(m,\theta) \, \lambda \, \sum_{t=1}^{T-1} \sum_{b=1}^m d(x_{p_b^t},x_{p_b^{t+1}}).
$$
Considering only terms with $b=a+1$ and dropping all powers of the weighting factor $\rho$, we have
$$
\cS^1 = \sum_{1\le a<b\le m} \rho^{m-1-a} d(x_a,x_b) \ge \sum_{i=1}^{m-1} d(x_i,x_{i+1}).
$$
Thus
$$
\sum_{i=1}^{m-1} d(x_i,x_{i+1}) + \sum_{u=m}^{p_m^T-1} d(x_u,x_{u+1}) \le \cS^T + C_1(m,\theta) \, \lambda \, \sum_{b=1}^m \left( \sum_{t=1}^{T-1} d(x_{p_b^t},x_{p_b^{t+1}}) \right),
$$
or equivalently,
$$
L(S) = \sum_{i=1}^n d(x_i,x_{i+1}) \le \cS^T + \sum_{u=p_m^T}^{n-1} d(x_u,x_{u+1}) + C_1(m,\theta) \, \lambda \, \sum_{b=1}^m \left( \sum_{t=1}^{T-1} d(x_{p_b^t},x_{p_b^{t+1}}) \right).
$$
By Lemma \ref{Lemma:expcurveBT}, $(S,d)$ is $M_{\lambda_1}$-bounded turning. Hence $d(x_u,x_{u+1}) \le M_{\lambda_1} d(x_1,x_n) = M_{\lambda_1} D(S)$ for all $p_m^T \le u \le {n-1}$, and so
$$
L(S) \le \cS^T + m \, M_{\lambda_1} \, D(S) + C_1(m,\theta) \, \lambda \, \sum_{b=1}^m \left( \sum_{t=1}^{T-1} d(x_{p_b^t},x_{p_b^{t+1}}) \right).
$$
For each such $b$ and $t$, we have $d(x_{p_b^t},x_{p_b^{t+1}}) \le d(x_{p_b^t},x_{p_b^t+1}) + \cdots + d(x_{p_b^{t+1}-1},x_{p_b^{t+1}})$ by the triangle inequality. Thus
$$
\sum_{b=1}^m \sum_{t=1}^{T-1} d(x_{p_b^t},x_{p_b^{t+1}}) \le \sum_{b=1}^m \sum_{u=1}^{n-1} d(x_u,x_{u+1}) = m \, L(S)
$$
and we conclude that
$$
L(S) \le \cS^T + m \, M_{\lambda_1} \, D(S) + m \, C_1(m,\theta) \, \lambda L(S).
$$
Finally, we estimate the term $\cS^T$. Using the $M_{\lambda_1}$ bounded turning property again, we find
\begin{equation*}\begin{split}
\cS^T &= \sum_{1 \le a < b \le m} \rho^{m-1-a} \, d(x_{p_a^T},x_{p_b^T}) \\
&\le M_{\lambda_1} \, \sum_{1 \le a < b \le m} \rho^{m-1-a} \, d(x_1,x_n).
\end{split}\end{equation*}
The quantity $\sum_{1 \le a < b \le m} \rho^{m-1-a}$ can be explicitly computed as a function of $m$ and $\rho$, alternatively, we can use the trivial upper bound
\begin{equation}\label{eq:sum-a-b-rho-power}
\sum_{1 \le a < b \le m} \rho^{m-1-a} \le \frac{m(m-1)}{2} \rho^{m-2}.
\end{equation}
Using \eqref{eq:sum-a-b-rho-power} for simplicity, we obtain 
$$
\cS^T \le \frac{m(m-1)}{2} \, M_{\lambda_1} \, \rho^{m-2} D(S)
$$
and therefore
\begin{equation}\label{eq:L-to-D-upper-bound}
L(S) \le \frac{\bigl(\tfrac{m(m-1)}{2}\rho^{m-2} + m\bigr)M_{\lambda_1}}{1-m \, C_1(m,\theta) \, \lambda} \, D(S) \le \frac{6\bigl(\tfrac{m(m-1)}{2}\rho^{m-2} + m\bigr)}{1-m \, C_1(m,\theta) \, \lambda} \, D(S)
\end{equation}
provided $\lambda<(m C_1(m,\theta))^{-1}$. (Here we again used \eqref{eq:Mlambda1}). The bound in \eqref{eq:L-to-D-upper-bound} tells us how to choose the constants $\lambda_1$ and $C$ in the statement of Proposition \ref{prop:1}. To wit, set
\begin{equation}\label{eq:lambda-nought}
\lambda_1(\theta,m) := \frac{1}{2m C_1(m,\theta)}
\end{equation}
and
\begin{equation}\label{eq:C}
C(\theta,m) := 12\left( \frac{m(m-1)}{2}\rho^{m-2} + m \right).
\end{equation}
\end{proof}

\begin{proof}[Proof of Lemma \ref{lem:prop1lemma}]
We break $\cS^t$ into three separate terms, as follows:
\begin{equation}\begin{split}\label{eq:S-t-three-terms}
&\sum_{\substack{1 \le a < b \le m \\ p_a^t < j^t}} \rho^{m-1-a} d(x_{p_a^t},x_{p_b^t}) + \sum_{\substack{1 \le a < b \le m \\ p_a^t = j^t}} \rho^{m-1-a} d(x_{p_a^t},x_{p_b^t}) + \sum_{\substack{1 \le a < b \le m \\ p_a^t > j^t}} \rho^{m-1-a} d(x_{p_a^t},x_{p_b^t}) \\
& \qquad =: \bI^t + \bI\bI^t + \bI\bI\bI^t.
\end{split}\end{equation}

\begin{lemma}\label{lem:prop1lemmaI}
If the index $\hat{a}$ is chosen so that $p_{\hat{a}}^t = i^t$, then 
$$
\bI^t \le \bI^{t+1} - \theta \rho^{m-1-\hat{a}} d(x_{j^t},x_{k^t}) + C_1(m,\theta) \lambda \, \sum_{b=1}^m d(x_{p_b^t},x_{p_b^{t+1}}).
$$
\end{lemma}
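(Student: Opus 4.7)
The plan is to bound each distance appearing in $\bI^t$ by a corresponding distance at time $t+1$, with controlled error, exploiting the failure of the medial ordered $\SRA(\theta)$ condition at the triple $(i^t, j^t, k^t)$ to produce the single negative $\theta$-contribution. Let $\hat{a} < \hat{b} < \hat{c}$ denote the positions of $i^t$, $j^t$, $k^t$ within $P^t = \{p_1^t < \cdots < p_m^t\}$. The combinatorial identity driving the argument is that $d_t = \hat{c} - \hat{b}$ and the reindexing from $P^t$ to $P^{t+1}$ preserves the first $\hat{b}-1$ positions while shifting $p_{\hat{c}}^t$ into position $\hat{b}$: in symbols, $p_a^{t+1} = p_a^t$ for $1 \le a < \hat{b}$, and $p_{\hat{b}}^{t+1} = p_{\hat{c}}^t = k^t$. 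This last equality is precisely what allows the failed medial ordered $\SRA(\theta)$ inequality to be reread as a comparison with a term at time $t+1$.

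I would next handle the terms of $\bI^t$ in two groups. For the single distinguished pair $(a,b) = (\hat{a},\hat{b})$, the inequality \eqref{eq:contradiction-hypothesis} rearranges to
\[
d(x_{p_{\hat{a}}^t}, x_{p_{\hat{b}}^t}) \le d(x_{p_{\hat{a}}^t}, x_{p_{\hat{c}}^t}) - \theta\, d(x_{j^t}, x_{k^t}) = d(x_{p_{\hat{a}}^{t+1}}, x_{p_{\hat{b}}^{t+1}}) - \theta\, d(x_{j^t}, x_{k^t}),
\]
which, after multiplication by $\rho^{m-1-\hat{a}}$, supplies the negative term demanded by the lemma. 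For every remaining pair $(a,b)$ contributing to $\bI^t$, the chain $p_a^t < p_b^t \le p_b^{t+1}$ (with equality on the right holding exactly when $p_b^t = p_b^{t+1}$), together with the rough $\lambda$-self-expansion property applied to the ordered triple $p_a^t < p_b^t < p_b^{t+1}$, yields
\[
d(x_{p_a^t}, x_{p_b^t}) \le d(x_{p_a^{t+1}}, x_{p_b^{t+1}}) + \lambda\, d(x_{p_b^t}, x_{p_b^{t+1}}).
\]

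Weighting each of these inequalities by $\rho^{m-1-a}$ and summing over the index set of $\bI^t$, the leading terms reassemble into $\bI^{t+1}$, while the $\lambda$-error contributions consolidate into
\[
\lambda \sum_{a=1}^{\hat{b}-1} \rho^{m-1-a} \sum_{b=a+1}^m d(x_{p_b^t}, x_{p_b^{t+1}}).
\]
Interchanging the order of summation and using the crude bound $\sum_{a=1}^{m-1} \rho^{m-1-a} \le (m-1)\rho^{m-2}$ collapses this into $C_1(m,\theta) \sum_{b=1}^m d(x_{p_b^t}, x_{p_b^{t+1}})$, with $C_1(m,\theta) := (m-1)\rho^{m-2}$ and $\rho = \rho(m,\theta)$ as in \eqref{eq:rho}.

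The main subtlety I anticipate is the combinatorial alignment between the index ranges of $\bI^t$ and $\bI^{t+1}$: one must verify that the distance terms $d(x_{p_a^{t+1}}, x_{p_b^{t+1}})$ with $1 \le a \le \hat{b}-1$ and $a < b \le m$ that appear on the right-hand side correctly assemble into $\bI^{t+1}$ under the threshold convention in use. The identity $p_{\hat{b}}^{t+1} = k^t > j^t$ is what makes this alignment clean, and once it is confirmed the remaining steps are routine bookkeeping.
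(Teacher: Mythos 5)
Your proposal is correct and follows essentially the same route as the paper's proof: the failed medial ordered $\SRA(\theta)$ inequality at $(i^t,j^t,k^t)$, reread via $p_{\hat{b}}^{t+1}=k^t$, supplies the single negative $\theta$-term, and the rough $\lambda$-self-expansion applied to $p_a^t<p_b^t\le p_b^{t+1}$ handles all other pairs, with the weighted error collapsing by a geometric-series bound. The only difference is cosmetic: the paper takes $C_1(m,\theta)=(\rho^{m-1}-1)/(\rho-1)$ rather than your $(m-1)\rho^{m-2}$, which is immaterial.
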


\begin{lemma}\label{lem:prop1lemmaII}
If the index $\hat{\hat{a}}$ is chosen so that $p_{\hat{\hat{a}}}^t = j^t$, then
$$
\bI\bI^t \le \bI\bI^{t+1} - \rho^{m-1-\hat{\hat{a}}} d(x_{k^t},x_{p_m^t+1}) + M_\lambda (m- \hat{\hat{a}}) \, \rho^{m-1-\hat{\hat{a}}} d(x_{j^t},x_{k^t}).
$$
\end{lemma}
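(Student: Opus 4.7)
The plan is to unpack the sums $\bI\bI^t$ and $\bI\bI^{t+1}$ explicitly, match up the indices which appear in each, and then bound the remaining difference term-by-term using the triangle inequality and the $M_\lambda$-bounded turning property furnished by Lemma \ref{Lemma:expcurveBT}. Throughout, I take $\bI\bI^{t+1}$ to be the natural positional analogue of $\bI\bI^t$, namely the contribution to $\cS^{t+1}$ from pairs $(a,b)$ with $a = \hat{\hat{a}}$.

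First I would introduce an auxiliary index $\hat{\hat{\hat{a}}} \in \{1,\ldots,m\}$ characterized by $p_{\hat{\hat{\hat{a}}}}^t = k^t$. The elements of $P^t$ lying in $[j^t,k^t-1]$ are then precisely $p_{\hat{\hat{a}}}^t, p_{\hat{\hat{a}}+1}^t, \ldots, p_{\hat{\hat{\hat{a}}}-1}^t$, so $d_t = \hat{\hat{\hat{a}}} - \hat{\hat{a}}$. Tracking the construction of $P^{t+1}$ then shows that positions $a < \hat{\hat{a}}$ are unchanged, position $\hat{\hat{a}}$ is now occupied by $k^t$, positions $\hat{\hat{a}}+1,\ldots,m-d_t$ are occupied by $p_{\hat{\hat{\hat{a}}}+1}^t,\ldots,p_m^t$ (the elements of $P^t$ strictly to the right of $k^t$), and the final $d_t$ positions are occupied by the new elements $p_m^t+1,\ldots,p_m^t+d_t$. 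In particular $p_{\hat{\hat{a}}}^{t+1} = k^t$, so
\[
\bI\bI^{t+1} = \rho^{m-1-\hat{\hat{a}}}\sum_{b > \hat{\hat{a}}} d(x_{k^t},x_{p_b^{t+1}}).
\]

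With these identifications, set $A := \{p_b^t : \hat{\hat{a}} < b \le m\}$ and $B := \{p_b^{t+1} : \hat{\hat{a}} < b \le m\}$; each has $m - \hat{\hat{a}}$ elements. Partition $A = A_1 \sqcup A_2$ and $B = B_1 \sqcup B_2$, where $A_1$ is the set of $d_t - 1$ elements of $P^t$ lying strictly between $j^t$ and $k^t$, $B_1 = \{p_{\hat{\hat{\hat{a}}}+1}^t,\ldots,p_m^t\}$ is the set of $m - \hat{\hat{\hat{a}}}$ elements of $P^t$ strictly to the right of $k^t$, $A_2 = \{k^t\}\cup B_1$, and $B_2 = \{p_m^t+1,\ldots,p_m^t+d_t\}$. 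Since $B_1$ occurs in both $A$ and $B$, the difference $\bI\bI^t - \bI\bI^{t+1}$ equals $\rho^{m-1-\hat{\hat{a}}}$ times
\[
\sum_{y\in A_1}d(x_{j^t},x_y) + d(x_{j^t},x_{k^t}) + \sum_{y\in B_1}\bigl[d(x_{j^t},x_y) - d(x_{k^t},x_y)\bigr] - \sum_{y\in B_2}d(x_{k^t},x_y).
\]

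Finally, I would bound each piece. The triangle inequality gives $d(x_{j^t},x_y) - d(x_{k^t},x_y) \le d(x_{j^t},x_{k^t})$ for every $y \in B_1$, and $M_\lambda$-bounded turning applied between the consecutive points $j^t$ and $k^t$ of $(S,d)$ yields $d(x_{j^t},x_y) \le M_\lambda d(x_{j^t},x_{k^t})$ for each $y \in A_1$. From the $B_2$-sum I keep only the leading summand $d(x_{k^t},x_{p_m^t+1})$, discarding the other non-negative terms. Assembling these estimates, using $M_\lambda \ge 1$ and recalling $|A_1| = d_t - 1$, $|B_1| = m - \hat{\hat{\hat{a}}}$, and $d_t = \hat{\hat{\hat{a}}} - \hat{\hat{a}}$, the coefficient of $d(x_{j^t},x_{k^t})$ telescopes to $M_\lambda\bigl[(d_t - 1) + 1 + (m - \hat{\hat{\hat{a}}})\bigr] = M_\lambda(m - \hat{\hat{a}})$, which is exactly the bound stated in the lemma. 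The main obstacle is the combinatorial bookkeeping matching positions in $P^t$ to positions in $P^{t+1}$ across the removal/insertion rule; once this is set up, the metric estimates are routine.
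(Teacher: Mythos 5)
Your proposal is correct and follows essentially the same route as the paper: the same three estimates appear (the triangle inequality $d(x_{j^t},x_y)-d(x_{k^t},x_y)\le d(x_{j^t},x_{k^t})$ for points beyond $k^t$, the $M_\lambda$-bounded turning bound for points between $j^t$ and $k^t$, and retaining only the single new term $d(x_{k^t},x_{p_m^t+1})$ while discarding the other nonnegative contributions), with the paper phrasing the last step via the set inclusion $\{u\in P^t: u>k^t\}\cup\{p_m^t+1\}\subset\{u\in P^{t+1}: u>k^t\}$ rather than your explicit partition $A=A_1\sqcup A_2$, $B=B_1\sqcup B_2$. Your more detailed positional bookkeeping and the final absorption of the coefficient via $M_\lambda\ge 1$ are consistent with, and if anything slightly more explicit than, the paper's argument.
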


\begin{lemma}\label{lem:prop1lemmaIII}
\begin{equation*}\begin{split}
\bI\bI\bI^t 
& \le \bI\bI\bI^{t+1} - \sum_{\substack{1 \le a<b \le m \\ p_a^t > j^t}} \rho^{m-1-a} d(x_{p_a^{t+1}},x_{p_b^{t+1}}) \\
& \qquad + M_\lambda \, \sum_{\substack{1\le a<b\le m \\ p_a^t > j^t}} \rho^{m-1-a} d(x_{j^t},x_{k^t}) + M_\lambda \, \sum_{\substack{1 \le a<b\le m \\ p_a^t > j^t, p_b^t > k^t}} \rho^{m-1-a} d(x_{k^t},x_{p_m^t+1}).
\end{split}\end{equation*}
\end{lemma}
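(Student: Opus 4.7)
The plan is to follow the same triangle–inequality template used in Lemmas \ref{lem:prop1lemmaI} and \ref{lem:prop1lemmaII}: for each pair $(a,b)$ contributing to $\bI\bI\bI^t$, split the distance via
$$
d(x_{p_a^t},x_{p_b^t}) \le d(x_{p_a^{t+1}},x_{p_b^{t+1}}) + d(x_{p_a^t},x_{p_a^{t+1}}) + d(x_{p_b^t},x_{p_b^{t+1}}),
$$
multiply by the weight $\rho^{m-1-a}$, and sum over the index range $\{1\le a<b\le m,\;p_a^t>j^t\}$. The first term will be regrouped with $\bI\bI\bI^{t+1}$ (modulo the subtracted correction, which accounts for the mismatch between summing over $\{p_a^t>j^t\}$ and summing over $\{p_a^{t+1}>j^{t+1}\}$), and the two remaining terms will be controlled by the $M_\lambda$-bounded turning property of Lemma \ref{Lemma:expcurveBT}.

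To apply the bounded turning estimates, I would track precisely how indices in $P^t$ relate to those in $P^{t+1}$. Since $P^{t+1}$ is obtained by removing the $d_t$ elements in $P^t\cap\{j^t,\ldots,k^t-1\}$ and appending $\{p_m^t+1,\ldots,p_m^t+d_t\}$, the range $a>\hat{\hat{a}}$ splits into two blocks: either $a$ indexes an element of $(j^t,k^t)$ in $P^t$ (removed when passing to $P^{t+1}$), in which case both $x_{p_a^t}$ and $x_{p_a^{t+1}}$ lie on a subarc whose endpoints are among $\{x_{j^t},x_{k^t}\}$, or $a$ indexes an element with $p_a^t\ge k^t$ (kept, but shifted), so that any shift of $x_{p_b^{t+1}}$ into the newly appended block lands on the subarc with endpoints in $\{x_{k^t},x_{p_m^t+1}\}$. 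In each case Lemma \ref{Lemma:expcurveBT} yields either $d(x_{p_a^t},x_{p_a^{t+1}})\le M_\lambda\,d(x_{j^t},x_{k^t})$ or $d(x_{p_b^t},x_{p_b^{t+1}})\le M_\lambda\,d(x_{k^t},x_{p_m^t+1})$; summing these bounds with the $\rho^{m-1-a}$ weights over the corresponding index subsets yields exactly the two $M_\lambda$ terms in the statement.

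The principal obstacle is combinatorial bookkeeping rather than geometry. First, one must carefully identify, for each pair $(a,b)$ with $p_a^t>j^t$, which of the three scenarios (both indices correspond to retained elements; one corresponds to a removed element; one corresponds to a newly appended element) occurs, and verify in each case that the relevant pair of points lies on a common self-monotone subarc so that bounded turning can be invoked with the stated anchor distances $d(x_{j^t},x_{k^t})$ or $d(x_{k^t},x_{p_m^t+1})$. Second, one must reconcile the sum $\sum_{p_a^t>j^t}\rho^{m-1-a}d(x_{p_a^{t+1}},x_{p_b^{t+1}})$ arising from the main term of the triangle inequality with $\bI\bI\bI^{t+1}$, whose summation range is $\{p_a^{t+1}>j^{t+1}\}$ rather than $\{p_a^t>j^t\}$; the explicit subtracted term in the statement is precisely the discrepancy between these two index sets, so matching it correctly requires a delicate reindexing argument. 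Once this bookkeeping is handled, the inequality falls out directly.
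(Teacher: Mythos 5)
There is a genuine gap, and it stems from a misreading of what the lemma actually asserts. In the decomposition \eqref{eq:S-t-three-terms}, the splitting of the index pairs into the three groups is determined once and for all by $j^t$, so the symbol $\bI\bI\bI^{t+1}$ appearing in Lemmas \ref{lem:prop1lemmaI}--\ref{lem:prop1lemmaIII} is the sum of $\rho^{m-1-a}d(x_{p_a^{t+1}},x_{p_b^{t+1}})$ over the \emph{same} index set $\{1\le a<b\le m,\ p_a^t>j^t\}$ (this is forced: it is the only reading under which the three lemmas sum to an inequality between $\cS^t$ and $\cS^{t+1}$, and under which the needed lower bound $\cR_3\ge\sum_{u}d(x_u,x_{u+1})$ can hold, since the analogous containment with $j^{t+1}$ in place of $j^t$ fails in general). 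Consequently $\bI\bI\bI^{t+1}$ and the subtracted correction cancel identically, and the lemma really asserts that the \emph{entire} quantity $\bI\bI\bI^t$ is absorbed into the two $M_\lambda$ error terms. Your plan --- apply $d(x_{p_a^t},x_{p_b^t})\le d(x_{p_a^{t+1}},x_{p_b^{t+1}})+d(x_{p_a^t},x_{p_a^{t+1}})+d(x_{p_b^t},x_{p_b^{t+1}})$ and keep the first term as the ``main term'' --- produces the sum $Q:=\sum_{p_a^t>j^t}\rho^{m-1-a}d(x_{p_a^{t+1}},x_{p_b^{t+1}})$ with a \emph{plus} sign, whereas the lemma requires it with a \emph{minus} sign; the two differ by $2Q$, and $Q$ is not a harmless bookkeeping discrepancy: it is exactly the term $\cR_3$ whose negativity in \eqref{eq:St-induction} delivers the accumulated length $\sum_{u=p_m^t}^{p_m^{t+1}-1}d(x_u,x_{u+1})$ on which the whole proof of Proposition \ref{prop:1} rests. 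No amount of reindexing between $\{p_a^t>j^t\}$ and $\{p_a^{t+1}>j^{t+1}\}$ can recover this sign.

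The correct argument is shorter and uses neither the rough self-expanding condition nor the time-$(t+1)$ points: for a pair with $p_a^t>j^t$, either $p_b^t\le k^t$, in which case $j^t<p_a^t<p_b^t\le k^t$ and the $M_\lambda$-bounded turning property gives $d(x_{p_a^t},x_{p_b^t})\le M_\lambda\,d(x_{j^t},x_{k^t})$ directly; or $p_b^t>k^t$, in which case one inserts $x_{k^t}$ by the triangle inequality and bounds each half by bounded turning with anchor pairs $(x_{j^t},x_{k^t})$ and $(x_{k^t},x_{p_m^t+1})$ respectively, yielding $d(x_{p_a^t},x_{p_b^t})\le M_\lambda\bigl(d(x_{j^t},x_{k^t})+d(x_{k^t},x_{p_m^t+1})\bigr)$. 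Summing with the weights $\rho^{m-1-a}$ gives $\bI\bI\bI^t\le M_\lambda\sum_{p_a^t>j^t}\rho^{m-1-a}d(x_{j^t},x_{k^t})+M_\lambda\sum_{p_a^t>j^t,\,p_b^t>k^t}\rho^{m-1-a}d(x_{k^t},x_{p_m^t+1})$, and adding the identically zero quantity $\bI\bI\bI^{t+1}-Q$ yields the statement. The geometric point you are missing is that all points indexed by $\{a:p_a^t>j^t\}$ lie on the short arc of $S$ between $x_{j^t}$ and $x_{p_m^t}$, so the whole block $\bI\bI\bI^t$ is itself an error term; it does not need to be compared with its time-$(t+1)$ counterpart at all.
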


Postponing the proofs of these three lemmas, we complete the proof of Lemma \ref{lem:prop1lemma}. Summing the estimates for $\bI^t$, $\bI\bI^t$, and $\bI\bI\bI^t$ from the three lemmas, we obtain
\begin{equation}\label{eq:St-induction}
\cS^t \le \cS^{t+1} - \cR_1 - \cR_2 - \cR_3 + C_1(m,\theta) \lambda \, \sum_{b=1}^m d(x_{p_b^t},x_{p_b^{t+1}}),
\end{equation}
where
$$
\cR_1 = \left( \theta \rho^{m-1-\hat{a}} - M_\lambda \, \sum_{\substack{1\le a<b\le m \\ p_a^t > j^t}} \rho^{m-1-a} - M_\lambda (m- \hat{\hat{a}}) \, \rho^{m-1-\hat{\hat{a}}} \right) \, d(x_{j^t},x_{k^t}),
$$
$$
\cR_2 = \left( \rho^{m-1-\hat{\hat{a}}} - M_\lambda \, \sum_{\substack{1 \le a<b\le m \\ p_a^t > j^t, p_b^t > k^t}} \rho^{m-1-a} \right) d(x_{k^t},x_{p_m^t+1}),
$$
and
$$
\cR_3 = \sum_{\substack{1 \le a<b \le m \\ p_a^t > j^t}} \rho^{m-1-a} d(x_{p_a^{t+1}},x_{p_b^{t+1}}).
$$
Recall that the indexes $\hat{a}$ and $\hat{\hat{a}}$ were chosen so that $p_{\hat{a}}^t = i^t$ and $p_{\hat{\hat{a}}}^t = j^t$.

We now show that $\cR_1 \ge 0$ and one of the following two conclusions holds true:
\begin{itemize}
\item $\cR_2 \ge 0$ and $\cR_3 \ge  \sum_{u=p_m^t}^{p_m^{t+1}-1} d(x_u,x_{u+1})$, or
\item $\cR_2 \ge d(x_{p_m^t},x_{p_m^{t+1}})$ and $\cR_3 \ge \sum_{u=p_m^t+1}^{p_m^{t+1}-1} d(x_u,x_{u+1})$.
\end{itemize}
Using these bounds in \eqref{eq:St-induction} we derive the desired conclusion
$$
\cS^t \le \cS^{t+1} - \sum_{u=p_m^t}^{p_m^{t+1}} d(x_u,x_{u+1}) + C_1(m,\theta) \lambda \, \sum_{b=1}^m d(x_{p_b^t},x_{p_b^{t+1}})
$$
and thus complete the proof of Lemma \ref{lem:prop1lemma}.

We first analyze the remainder term $\cR_1$. Since $p_{\hat{a}}^t = i^t$, it follows that $a \ge \hat{a}+1$ whenever $p_a^t \ge j^t$. In particular, since $p_{\hat{\hat{a}}}^t = j^t$ we have $\hat{\hat{a}} \ge \hat{a} + 1$. It follows that
$$
\sum_{\substack{1\le a<b\le m \\ p_a^t > j^t}} \rho^{m-1-a} \le \frac{m(m-1)}{2} \rho^{m-2-\hat{a}}
$$
and
$$
(m- \hat{\hat{a}}) \, \rho^{m-1-\hat{\hat{a}}} \le m \, \rho^{m-2-\hat{a}},
$$
and hence
$$
\cR_1 \ge \left( \theta  \rho^{m-1-\hat{a}} - \frac{m(m+1)}{2} M_\lambda \rho^{m-2-\hat{a}} \right) d(x_{j^t},x_{k^t}).
$$
In view of the choice of $\rho$ in \eqref{eq:rho}, we conclude that $\cR_1 \ge 0$.

We analyze the remainder terms $\cR_2$ and $\cR_3$ simultaneously, dividing into two cases:
\begin{itemize}
\item[(i)] $k^t < p_m^t$, and 
\item[(ii)] $k^t = p_m^t$.
\end{itemize}

We start with case (i). Note that by construction, if $j^t < p_a^t < p_b^t \le p_m^t$, then $k^t < p_a^{t+1} < p_b^{t+1} \le p_m^{t+1}$. Moreover, each of the pairs $(u,u+1)$ with $p_m^t \le u < p_m^{t+1}-1$ occurs as a pair of the form $(p_a^{t+1},p_b^{t+1})$ for some $a<b$ with $p_a^t > j^t$. Thus
$$
\cR_3 = \sum_{\substack{1 \le a<b \le m \\ p_a^t > j^t}} \rho^{m-1-a} d(x_{p_a^{t+1}},x_{p_b^{t+1}}) \ge \sum_{u=p_m^t}^{p_m^{t+1}-1} d(x_u,x_{u+1}).
$$
Furthermore, $p_{\hat{\hat{a}}}^t = j^t$, and so $a \ge \hat{\hat{a}} + 1$ whenever $p_a^t > j^t$. Thus
$$
\sum_{\substack{1 \le a<b\le m \\ p_a^t > j^t, p_b^t > k^t}} \rho^{m-1-a} d(x_{k^t},x_{p_m^t+1}) \le \frac{m(m-1)}{2} \rho^{m-2-\hat{\hat{a}}}
$$
and hence
$$
\cR_2 \ge \left( \rho^{m-1-\hat{\hat{a}}} - \frac{m(m-1)}{2} M_\lambda \rho^{m-2-\hat{\hat{a}}} \right) d(x_{k^t},x_{p_m^t+1}).
$$
We conclude that $\cR_2 \ge 0$ if
\begin{equation}\label{eq:rho2}
\rho \ge \frac{m(m-1)}{2} M_\lambda
\end{equation}
but note that \eqref{eq:rho2} is satisfied if $\rho$ is chosen as in \eqref{eq:rho}, since $\lambda \le \lambda_1 \le \tfrac12$.

Alternatively, assume that case (ii) holds. In this case,
$$
\cR_2 = \rho^{m-1-\hat{\hat{a}}} d(x_k^t,x_{p_m^t+1}) \ge d(x_{p_m^t},x_{p_m^t+1}).
$$
Also, if $j^t < p_a^t < p_b^t \le p_m^t = k^t$, then $k^t < p_a^{t+1} <  p_b^{t+1}$, and each of the pairs $(u,u+1)$ with $p_m^t+1 \le u < p_m^{t+1}-1$ occurs as a pair of the form $(p_a^t,p_b^t)$ for some $a<b$ with $p_a^t>j^t$. Thus
$$
\cR_3 = \sum_{\substack{1 \le a<b \le m \\ p_a^t > j^t}} \rho^{m-1-a} d(x_{p_a^{t+1}},x_{p_b^{t+1}}) \ge \sum_{u=p_m^t+1}^{p_m^{t+1}-1} d(x_u,x_{u+1}).
$$
This finishes the proof of Lemma \ref{lem:prop1lemma}.
\end{proof}

Finally, we prove Lemmas \ref{lem:prop1lemmaI}, \ref{lem:prop1lemmaII}, and \ref{lem:prop1lemmaIII}.

\begin{proof}[Proof of Lemma \ref{lem:prop1lemmaI}]
Recall that we fix $\hat{a}$ so that $p_{\hat{a}}^t = i^t$, and we consider the sum of terms in $\cS^t$ over pairs $(a,b)$ so that $p_a^t < j^t$. For such $a$ and $b$, we necessarily have $p_a^{t+1} = p_a^t$ and $p_b^{t+1} \ge p_b^t$ by construction. If we also fix $\hat{b}$ so that $p_{\hat{b}}^t = j^t$, then $p_{\hat{b}}^{t+1} = k^t$.

For each pair $a<b$ so that $(a,b) \ne (\hat{a},\hat{b})$, the rough $\lambda$-self-expanding condition implies that
\begin{equation}\label{eq:prop1lemmaI-1}
d(x_{p_a^t},x_{p_b^t}) \le d(x_{p_a^{t+1}},x_{p_b^{t+1}}) + \lambda d(x_{p_b^t},x_{p_b^{t+1}}).
\end{equation}
For the particular choice $(a,b) = (\hat{a},\hat{b})$, we have the following alternate inequality which follows from \eqref{eq:contradiction-hypothesis}:
\begin{equation}\label{eq:prop1lemmaI-2}
d(x_{p_{\hat{a}}^t},x_{p_{\hat{b}}^t}) < d(x_{p_{\hat{a}}^{t+1}},x_{p_{\hat{b}}^{t+1}}) - \theta d(x_{j^t},x_{k^{t}}).
\end{equation}
Summing over all $a<b$ for which $p_a^t<j^t$, we conclude that
$$
\bI^t \le \bI^{t+1} - \theta \rho^{m-1-\hat{a}} d(x_{j^t},x_{k^t}) + \lambda \sum_{\substack{1\le a<b\le m \\ p_a^t < j^t, (a,b) \ne (\hat{a},\hat{b})}} \rho^{m-1-a} d(x_{p_b^t},x_{p_b^{t+1}}).
$$
To complete the proof of this lemma, we estimate
$$
 \sum_{\substack{1\le a<b\le m \\ p_a^t < j^t, (a,b) \ne (\hat{a},\hat{b})}} \rho^{m-1-a} d(x_{p_b^t},x_{p_b^{t+1}}) \le \sum_{a=1}^{m-1} \rho^{m-1-a} \sum_{b=2}^m d(x_{p_b^t},x_{p_b^{t+1}}) \le C_1(m,\theta) \sum_{b=2}^m d(x_{p_b^t},x_{p_b^{t+1}}),
$$
where
$$
C_1(m,\theta) = \frac{\rho^{m-1}-1}{\rho-1}
$$
and $\rho$ is chosen as in \eqref{eq:rho}. \end{proof}

\begin{proof}[Proof of Lemma \ref{lem:prop1lemmaII}]
Since we chose $\hat{\hat{a}}$ so that $p_{\hat{\hat{a}}}^t = j^t$, in this case we restrict to the sum of terms in $\cS^t$ over pairs $(a,b)$ with $a = \hat{\hat{a}}$. Our goal is to estimate
$$
\rho^{m-1-\hat{\hat{a}}} \sum_{\hat{\hat{a}} < b < m} d(x_{p_{\hat{\hat{a}}}^t},x_{p_b^t}).
$$
In this case, we have $p_{\hat{\hat{a}}}^{t+1} = k^t$ by construction. Moreover,
\begin{equation}\label{eq:inclusion}
\{ u \in P^t \, : \, u>k^t \} \cup \{ p_m^t+1 \} \subset \{ u \in P^{t+1} \, : \, u>k^t \}.
\end{equation}
For $b$ with $p_b^t > k^t$ we use the triangle inequality to get
\begin{equation}\label{eq:prop1lemmaII-1}
d(x_{p_{\hat{\hat{a}}}^t},x_{p_b^t}) \le d(x_{j^t},x_{k^t}) + d(x_{k^t},x_{p_b^t}),
\end{equation}
while for $b$ with $p_b^t \le k^t$ we use the $M_\lambda$-bounded turning property to get
\begin{equation}\label{eq:prop1lemmaII-2}
d(x_{p_{\hat{\hat{a}}}^t},x_{p_b^t}) \le M_\lambda \, d(x_{j^t},x_{k^t}).
\end{equation}
Thus
$$
\sum_{\hat{\hat{a}} < b \le m} d(x_{p_{\hat{\hat{a}}}^t},x_{p_b^t}) \le M_\lambda \sum_{\hat{\hat{a}} < b \le m} d(x_{j^t},x_{k^t}) + 
\sum_{\substack{\hat{\hat{a}} < b \le m \\ p_b^t > k^t}} d(x_{k^t},x_{p_b^t}).
$$
Due to \eqref{eq:inclusion},
$$
\sum_{\substack{\hat{\hat{a}} < b \le m \\ p_b^t > k^t}}  d(x_{p_{\hat{\hat{a}}}^t},x_{p_b^t}) \le
\sum_{\hat{\hat{a}} < b \le m} d(x_{p_{\hat{\hat{a}}}^{t+1}},x_{p_b^{t+1}}) - d(x_{k^t},x_{p_m^t+1}).
$$
Combining all of the above yields
$$
\bI\bI^t \le \bI\bI^{t+1} - \rho^{m-1-\hat{\hat{a}}} d(x_{k^t},x_{p_m^t+1}) + M_\lambda (m- \hat{\hat{a}}) \, \rho^{m-1-\hat{\hat{a}}} d(x_{j^t},x_{k^t})
$$
as desired.
\end{proof}

\begin{proof}[Proof of Lemma \ref{lem:prop1lemmaIII}]
In this final case, we consider pairs $a<b$ with $p_a^t > j^t$. If $p_b^t \le k^t$ then $j^t < p_a^t < p_b^t \le k^t$ and so
$$
d(x_{p_a^t},x_{p_b^t}) \le M_\lambda \, d(x_{j^t},x_{k^t})
$$
by the bounded turning condition. On the other hand, if $k^t < p_b^t$ then
$$
d(x_{p_a^t},x_{p_b^t}) \le d(x_{p_a^t},x_{k^t}) + d(x_{k^t},x_{p_b^t}) \le M_\lambda \, \left( d(x_{j^t},x_{k^t}) + d(x_{k^t},x_{p_m^t+1}) \right).
$$
It follows that
\begin{equation*}\begin{split}
\bI\bI\bI^t 
& \le \bI\bI\bI^{t+1} - \sum_{\substack{1 \le a<b \le m \\ p_a^t > j^t}} \rho^{m-1-a} d(x_{p_a^{t+1}},x_{p_b^{t+1}}) \\
& \qquad + M_\lambda \, \sum_{\substack{1\le a<b\le m \\ p_a^t > j^t}} \rho^{m-1-a} d(x_{j^t},x_{k^t}) + M_\lambda \, \sum_{\substack{1 \le a<b\le m \\ p_a^t > j^t, p_b^t > k^t}} \rho^{m-1-a} d(x_{k^t},x_{p_m^t+1})
\end{split}\end{equation*}
as desired.
\end{proof}

\section{Questions and further comments}\label{sec:questions}

In this section we collect several open problems motivated by our work, along with some relevant discussion and further comments.

\begin{question}\label{Q:optimal-lambda}
For a given metric space $(X,d)$, compute
\begin{equation}\label{eq:lambda0star}
\lambda_0^*(X) := \sup \left\{ \lambda_0 \in (-1,1] : {\mbox{every bounded, rough $\lambda$-self monotone curve} \atop \mbox{in $(X,d)$ with $\lambda < \lambda_0$ is rectifiable} } \right\}. 
\end{equation}
If no such value $\lambda_0$ exists, we set $\lambda_0^*(X) = -1$. Recall that $(-1)$-self monotone curves in any metric space are just geodesics, and hence are always rectifiable.
\end{question}

By results in \cite{DaDuDe}, we have $\lambda_0^*(\R^n) \ge \frac1n$ for every $n \ge 2$. Our main theorem (Theorem \ref{thm:rectifiability}) states that $\lambda_0^*(X) > 0$ whenever $(X,d)$ is $\SRA(\alpha)$ free for some $\tfrac12 < \alpha < 1$. By results discussed earlier in this paper, we have that $\lambda_0^*(X) \le 0$ when $X$ is the sub-Riemannian Heisenberg group, or Hilbert space, or Laakso space, or the metric graphs considered in Example \ref{example:metrictree}.

We do not know any examples of $\SRA(\alpha)$ free metric spaces $X$ for which $\lambda_0^*(X) < 1$, so it could be true that the value in \eqref{eq:lambda0star} for such spaces is always equal to one.

In connection with the previous discussion about the possible $\SRA(0)$ freeness of the Heisenberg group, we pose the following question.

\begin{question}\label{Q:Heis-SRA(0)}
Is the first Heisenberg group $\Heis^1$ equipped with the Carnot--Carath\'eodory metric $\SRA(0)$ free? If so, what is the largest size of an ultrametric subset of $(\Heis^1,d_{cc})$?
\end{question}

In \cite{kp:equilateral}, the {\it equilateral dimension} of a metric space $(X,d)$ is defined to be the cardinality of the largest equilateral subset of $X$, where a subset $A \subset X$ is said to be {\it equilateral} if there exists $c>0$ so that $d(x,y) = c$ for all distinct $x,y$ in $A$. Clearly, every equilateral subset is an ultrametric subset. The equilateral dimension of $\R^n$ in the Euclidean metric is $n+1$, and equilateral dimensions of other finite dimensional normed vector spaces are known. See \cite{kp:equilateral} for a review of these results. In \cite{kp:equilateral}, the authors show that the equilateral dimension of $(\Heis^1,d_H)$ is $4$, where $d_H$ denotes the Kor\'anyi metric on the Heisenberg group $\Heis^1$. However, it is not clear how to extend their argument to study Question \ref{Q:Heis-SRA(0)}.

The study of self-contracting curves in Euclidean space was heavily motivated by applications to gradient flows for convex functions. This connection has been worked out in detail in Euclidean and Riemannian spaces, but to the best of our knowledge the following possible extension has not been considered.

\begin{question}\label{Q:Heis-SC}
Are the integral curves associated to the horizontal gradient flow of an H-convex function in a domain of the Heisenberg group $\Heis^1$ necessarily self-contracting, in either the Kor\'anyi metric $d_H$ or the CC metric $d_{cc}$?
\end{question}

Regarding roughly self-contracting curves, the following question also remains open, even in Euclidean space.

\begin{question}\label{Q:rough-SC}
Do roughly self-contracting curves arise as integral curves associated to a gradient flow? If so, what is the relevant class of functions whose gradient flows generate such curves?
\end{question}

Finally, we collect a few additional questions of a more technical nature, which arise in connection with specific remarks or examples in this paper.

\begin{question}\label{Q:sra-vs-snowflake}
Let $1<p<\infty$ and $\alpha<1$. Does there exist a set $X \subset \ell^p$ so that $(X,||\cdot||_p)$ satisfies the $\SRA(\alpha)$ condition, but $(X,||\cdot||_p^q)$ fails to be a metric space for any $q>1$?
\end{question}

\begin{question}\label{Q:Hilbert}
For which values $\alpha \in (0,1)$ does there exist a set $A$ in $\ell_2$ which is $\SRA(\alpha)$, yet $(A,||\cdot||_2^q)$ fails to be a metric space for any $q>1$?
\end{question}

Recall that in Example \ref{ex:no-converse} we saw that metric spaces $X$ with the above property exist for any $\alpha>0$, however, the examples constructed there did not admit any obvious embedding into any uniformly convex Banach space. For subsets of Hilbert space the best example we currently have is given in section \ref{appendix-b}, and only valid for $1/\sqrt{2} < \alpha < 1$.

\begin{question}\label{Q:main-thm-1}
Does there exist any value $\alpha \le \tfrac12$ for which the conclusion in Theorem \ref{thm:rectifiability} holds true for any $\SRA(\alpha)$ metric space?
\end{question}

\begin{question}\label{Q:main-thm-2}
If a metric space $(X,d)$ contains a bounded, unrectifiable self-monotone curve, must $(X,d)$ be $\SRA(\alpha)$ full for some choice of $0\le \alpha < 1$?
\end{question}

\begin{question}\label{Q:Laakso-existence}
Does the Laakso graph $G_\infty$ contain a bounded, self-contracted, unrectifiable set?
\end{question}

Recall that in Example \ref{example:laakso} we constructed an infinite ultrametric subset $F_\infty$ as a vertical slice of $G_\infty$. As noted before, $F_\infty$ may be equipped with a self-contracted ordering, but we do not know whether such an ordering exists which makes $F_\infty$ unrectifiable. There is, however, a self-contracted rectifiable ordering of $F_\infty$. Such ordering may be constructed as the limit of a consistent sequence of self-contracted orderings of the finite sets $F_m$, with a uniform upper bound on length. The set $F_m$ can be identified with the vertices of the cube $\{0,1\}^m$, and a suitable self-contracted ordering can be obtained from a Hamiltonian cycle on the associated graph. Using the formula \eqref{eq:d-on-f-m} for the metric on $F_m$, one computes that the length of $F_m$ in this ordering is bounded above by $\tfrac23$, uniformly in $m$. Details are left to the reader.

As noted in Remark \ref{rem:Laakso-remark}, the Laakso graph contains larger ultrametric subsets, which in turn can be equipped with self-contracted orderings. However, we do not know whether any such ordering is unrectifiable.

\section{Appendix A: A doubling metric space which is neither $\SRA(\alpha)$-free nor $\SRA(\alpha)$-full}\label{appendix-a}

The following example was provided to the authors by Tuomas Hyt\"onen \cite{hyt:personal}. We are grateful to him for permission to include it in this paper.

Let 
$$
d_\infty(x,y) := \inf \{ 2^{-j} : \exists I \in \cD_j \mbox{ with } x,y \in I \}
$$
be the standard dyadic ultrametric on $[0,1)$, where
$$
\cD_j := \{ 2^{-j} [k,k+1) : k=0,1,\ldots,2^j-1 \}
$$
are the dyadic subintervals of $[0,1)$ of length $2^{-j}$. 

For each $j$, let
$$
d_j(x,y) := \begin{cases} 
\abs{x-y}, & \mbox{if }d_\infty(x,y)\leq 2^{-j}, \\ 
d_\infty(x,y), &\mbox{otherwise}.
\end{cases}
$$
Note that $d_\infty(x,y) \ge |x-y|$, and hence $d_j(x,y) \ge |x-y|$. We claim that each $d_j$ is a metric. It suffices to verify the triangle inequality $d_j(x,y) \le d_j(x,z) + d_j(z,y)$. If $d_\infty(x,y) < 2^{-j}$, then
$$
  d_j(x,y)=\abs{x-y}\leq\abs{x-z}+\abs{z-y}\leq d_j(x,z)+d_j(z,y).
$$
If $d_\infty(x,y)\geq 2^{-j}$, then
\begin{equation*}\begin{split}
d_j(x,y) =d_\infty(x,y) &\leq\max\{d_\infty(x,z),d_\infty(z,y)\} \\
  &\stackrel{(*)}{=} \max\{d_j(x,z),d_j(z,y)\}\leq d_j(x,z)+d_j(z,y),
\end{split}
\end{equation*}
where in $(*)$ we used the fact that the larger of the values $d_\infty(u,z)$, $u\in\{x,y\}$, satisfies $d_\infty(u,z)\geq 2^{-j}$ and hence $d_\infty(u,z)=d_j(u,z)$.

On $X:=[0,1)\times\N$, let
\begin{equation*}
  d((x,m),(y,n)):=\begin{cases} d_n(x,y), & \mbox{if }m=n, \\ \abs{m-n}, & \mbox{if }m\neq n,\end{cases}
\end{equation*}
and
\begin{equation*}
  \mu(E):=\sum_{n\in\N}\abs{E_n},\qquad E=\bigcup_{n\in\N}E_n\times\{n\},
\end{equation*}
where $\abs{E_n}$ is the Lebesgue measure. 

We next claim that $(X,d)$ is a doubling metric space. For the proof, we introduce the notation $I_j(x)$, for $x \in [0,1)$ and $j \in \N$, to denote the unique element of $\cD_j$ which contains $x$. With this notation in place, and with a reminder that we denote by $B_d(p,r)$ the closed ball in a metric space $(X,d)$ with center $p$ and radius $r$, we invite the reader to verify that
$$
B_d((x,m),r) = \begin{cases}  
\{ y \in I_m(x) \, : \, |x-y| \le r \} \times \{m\} & \mbox{if $r \le 2^{-m}$,} \\
I_j(x) \times \{m\} & \mbox{if $2^{-j} < r \le 2^{1-j}$ and $j\in\{2\ldots,m\}$,} \\
I_1(x) \times \{m\} & \mbox{if $2^{-1} < r < 1$, and} \\
[0,1) \times \{n \in \N : |m-n| \le r \} & \mbox{if $r \ge 1$.} \end{cases}
$$
It follows that there exist constants $0<c_1 \le c_2 < \infty$ so that $c_1 r \le \mu(B_d(x,m),r) \le c_2 r$ for all $(x,m) \in X$ and all $r>0$. Hence $\mu$ is an Ahlfors $1$-regular measure on $(X,d)$, which implies that $(X,d)$ is doubling.

The metric space $(X,d)$ contains arbitrarily large ultrametric subsets, since for each $j$ the set $\{2^{-j}k: k=0,1,\ldots,2^j-1\}\times\{j\}$ is an ultrametric set of size $2^j$. Thus $(X,d)$ is not $\SRA(\alpha)$-free for any $0\le \alpha<1$. On the other hand, for any such $\alpha$, all $\SRA(\alpha)$ subsets of $X$ are finite. Indeed, suppose that $Y$ is an infinite $\SRA(\alpha)$ subset of $X$, and write
\begin{equation*}
  Y=\bigcup_{n\in\N} Y_n\times\{n\}.
\end{equation*}
If $Y_n\subset[0,1)$ has more than $2\cdot 2^n$ points, then one of the $2^n$ intervals $I\in\mathcal D_n$ must contain at least three points of $Y_n$. But the restriction of $d$ to $I\times\{n\}$, and hence to $(Y_n\cap I)\times\{n\}$ is the Euclidean distance. Then the three collinear points of this set contradict the $\SRA(\alpha)$ condition. It follows that each $Y_n$ has at most finitely many points, in which case there must be infinitely many $n$ with $Y_n\neq\emptyset$. Let $n_1$, $n_2$, and $n_3$ be three such integers $n$, and let $y_i$ be a point in $Y_{n_i}$ for $i=1,2,3$. Then the restriction of $d$ to $\{(y_1,n_1),(y_2,n_2),(y_3,n_3)\}$ agrees with the Euclidean distance on the three-point set $\{n_1,n_2,n_3\}$, and this again contradicts the $\SRA(\alpha)$ property. We conclude that $X$ does not contain any infinite $\SRA(\alpha)$ set, so $X$ is not $\SRA(\alpha)$-full.

\section{Appendix B: A partial answer to Question \ref{Q:Hilbert}}\label{appendix-b}

For each $\tfrac1{\sqrt2} < \alpha < 1$, we provide an example of a subset $X \subset \ell^2$ with the property that $(X,||\cdot||_2)$ satisfies the $\SRA(\alpha)$ condition, but $(X,||\cdot||_2^q)$ fails to be a metric space for any $q>1$.

\smallskip

Fix any sequence $(\delta_m)$ with $\delta_m \le 1$ and $\delta_m \searrow 0$. Let $R>0$ be a large parameter (whose value will be specified later). We define a set $X \subset \ell^2$ as follows:
$$
X = \bigcup_{m=1}^\infty \{x_m,y_m,z_m\}, \qquad x_m = R \bolde_{2m-1}, \, y_m = R \bolde_{2m-1} + \bolde_{2m}, \, \mbox{and} \, z_m = R \bolde_{2m-1} + \delta_m \bw_m.
$$
Here $(\bolde_m)$ denotes the standard orthonormal basis of $\ell^2$, and $\bw_m$ is a unit vector in $\spa\{\bolde_{2m-1},\bolde_{2m}\}$ selected so that the triangle with vertices $x_m$, $y_m$, and $z_m$ has sides of length $1$, $\delta_m$, and $1+\alpha\delta_m$. 

Denote by $|\cdot|$ the $\ell^2$ norm and by $d$ the $\ell^2$ metric. The proof that $(X,d^q)$ fails to be a metric space for any $q>1$ is exactly the same as in Example \ref{ex:no-converse}. It suffices for us to show that $(X,d)$ satisfies the $\SRA(\alpha)$ condition. 

First, we provide an estimate for the distances between points $a,b \in X$ in the case when $a$ and $b$ lie in distinct two-dimensional subspaces. Let $a \in \{x_m,y_m,z_m\} \subset \spa\{ \bolde_{2m-1}, \bolde_{2m} \}$ and $b \in \{x_n,y_n,z_n\} \subset \spa\{ \bolde_{2n-1},\bolde_{2n} \}$, where $m \ne n$. Then
\begin{equation}\label{eq:d-a-b}
\sqrt2 R - \sqrt2 \le |a-b| = \sqrt{|a|^2+|b|^2} \le \sqrt2 R + \sqrt2.
\end{equation}

Now let $a,b,c$ be distinct points in $X$. Assume that $a \in \{x_m,y_m,z_m\}$, $b\in \{x_n,y_n,z_n\}$, and $c \in \{x_p,y_p,z_p\}$. 

\smallskip

\noindent {\bf Step 1.} If $m=n=p$ then $a,b,c$ form the vertices of a triangle in $\R^2$ with side lengths as above, and hence the $\SRA(\alpha)$ condition is satisfied. 

\smallskip

\noindent {\bf Step 2.} Suppose next that $m$, $n$, and $p$ are all distinct. We impose the restriction
\begin{equation}\label{eq:R-restriction-1}
R \ge 1 + \frac2\alpha.
\end{equation}
Then $\sqrt2 R + \sqrt2 \le (1+\alpha) (\sqrt2 R - \sqrt2)$ and hence, using \eqref{eq:d-a-b},
$$
|a-b| \le \max \{ |a-c| + \alpha |b-c|, \alpha |a-c| + |b-c| \}.
$$
and the corresponding statements for all permutations of the elements $a$, $b$, and $c$ also hold true. Hence in this case the $\SRA(\alpha)$ condition is satisfied.

\smallskip

\noindent {\bf Step 3.} Finally, we consider the case when $m=p \ne n$. Thus $a,c \in \{x_m,y_m,z_m\}$ and $b \in \{x_n,y_n,z_n\}$. We need to verify the following three inequalities:
\begin{equation}\label{eq:sra-alpha-1}
|a-c| \le \max \{ |a-b| + \alpha |b-c|, \alpha |a-b| + |b-c| \},
\end{equation}
\begin{equation}\label{eq:sra-alpha-2}
|a-b| \le \max \{ |a-c| + \alpha |c-b|, \alpha |a-c| + |c-b| \},
\end{equation}
and
\begin{equation}\label{eq:sra-alpha-3}
|c-b| \le \max \{ |a-b| + \alpha |a-c|, \alpha |a-b| + |a-c| \}.
\end{equation}
Note, however, that \eqref{eq:sra-alpha-2} and \eqref{eq:sra-alpha-3} are symmetric from the point of view of the assumptions on the points $a$, $b$, and $c$. We will in fact prove that
\begin{equation}\label{eq:sra-alpha-3-5}
\bigl| |a-b| - |c-b| \bigr| \le \alpha |a-c|
\end{equation}
which implies both \eqref{eq:sra-alpha-2} and \eqref{eq:sra-alpha-3}.

To verify \eqref{eq:sra-alpha-1} we observe that $|a-c| \le 1 + \alpha \delta_m$ since both $a$ and $c$ lie in $\{x_m,y_m,z_m\}$. On the other hand, $\min\{|a-b|,|c-b|\} \ge \sqrt2 R - \sqrt2$ by \eqref{eq:d-a-b}. Thus \eqref{eq:sra-alpha-1} holds provided
\begin{equation}\label{eq:sra-alpha-4}
1 + \alpha \delta_m \le (1+\alpha) (\sqrt2 R - \sqrt2).
\end{equation}
We impose the restriction
\begin{equation}\label{eq:R-restriction-2}
R \ge 1+ \frac1{\sqrt2}
\end{equation}
which implies that \eqref{eq:sra-alpha-4} holds true.

Finally, we turn to the proof of \eqref{eq:sra-alpha-3-5}.  
For convenience, we use an alternate representation for the points $a,b,c$. Write $a = R \bolde_{2m-1} + \vareps_a w_a$, $b = R \bolde_{2n-1} + \vareps_b w_b$, and $c = R \bolde_{2m-1} + \vareps w_c$. Here $\vareps_u \in \{0,1\}$ for $u=a,b,c$; this boolean variable indicates whether $u=x_m$ or $u \in \{y_m,z_m\}$. The variable $w_u$ for $u=a,b,c$ takes on one of the following values: $w_u \in \{\bolde_{2m},\delta_m \bw_m\}$ if $u=a,c$ and $w_u \in \{\bolde_{2n},\delta_n \bw_n\}$ if $u=b$. With this notation in place we compute
\begin{equation*}\begin{split}
|a-b| 
&= |R \bolde_{2m-1} + \vareps_a w_a - R \bolde_{2n-1} - \vareps_b w_b| \\
&= \sqrt{2R^2 + 2R \vareps_a \langle \bolde_{2m-1},w_a\rangle + 2 R \vareps_b \langle \bolde_{2n-1}, w_b \rangle + \vareps_a |w_a|^2 + \vareps_b |w_b|^2},
\end{split}\end{equation*}
$|a-c| = |\vareps_a w_a - \vareps_c w_c|$, and
\begin{equation*}\begin{split}
|c-b| 
&= |R \bolde_{2m-1} + \vareps_c w_c - R \bolde_{2n-1} - \vareps_b w_b| \\
&= \sqrt{2R^2 + 2R \vareps_c \langle \bolde_{2m-1},w_c\rangle + 2 R \vareps_b \langle \bolde_{2n-1}, w_b \rangle + \vareps_c |w_c|^2 + \vareps_b |w_b|^2}.
\end{split}\end{equation*}

If $\{a,c\} = \{x_m,y_m\}$ then $|a-c|=1$ and the desired equation \eqref{eq:sra-alpha-3-5} reads
\begin{equation}\label{eq:abc1}
\bigl| |a-b| - |c-b| \bigr| \le \alpha.
\end{equation}
Using the expressions for $|a-b|$ and $|c-b|$ above, along with the identity $|\sqrt{A} - \sqrt{B}| = \tfrac{|A-B|}{\sqrt{A} + \sqrt{B}}$, we rewrite \eqref{eq:abc1} as
\begin{equation}\begin{split}\label{eq:abc2}
1 &\le \alpha \left( \sqrt{2R^2 + 2R \vareps_a \langle \bolde_{2m-1},w_a\rangle + 2 R \vareps_b \langle \bolde_{2n-1}, w_b \rangle + \vareps_a |w_a|^2 + \vareps_b |w_b|^2} \right. \\
& \qquad \left. + \sqrt{2R^2 + 2R \vareps_c \langle \bolde_{2m-1},w_c\rangle + 2 R \vareps_b \langle \bolde_{2n-1}, w_b \rangle + \vareps_c |w_c|^2 + \vareps_b |w_b|^2} \right).
\end{split}\end{equation}
Note that $\vareps_u |\langle \bolde_{2m-1}, w_u \rangle| \le \delta_m$ for $u \in \{a,c\}$ and $\vareps_u |\langle \bolde_{2n-1}, w_u \rangle| \le \delta_n$ for $u=b$. Indeed, if the boolean indicator $\vareps_u = 0$ this is trivial, while if $\vareps_u = 1$ then either the two components of the inner product are orthogonal and again the result is trivial, or the expression in question is bounded above by $\delta_m |\bw_m| = \delta_m$ or $\delta_n |\bw_n| = \delta_n$. Hence \eqref{eq:abc2} is implied by
\begin{equation}\begin{split}\label{eq:abc3}
1 \le 2 \alpha \sqrt{2R^2 - 2R\delta_m - 2R\delta_n}.
\end{split}\end{equation}
Inequality \eqref{eq:abc3} in turn is implied by
$$
1 \le 2 \alpha \sqrt{2R^2-4R}
$$
which is equivalent to
\begin{equation}\label{eq:R-restriction-3}
R \ge 1 + \sqrt{1 + \frac1{8\alpha^2}}.
\end{equation}

If $\{a,c\} = \{x_m,z_m\}$ then $|a-c| = \delta_m$ and we wish to verify
\begin{equation}\label{eq:abc4}
\bigl| |a-b| - |c-b| \bigr| \le \alpha \delta_m
\end{equation}
which we rewrite as
\begin{equation}\begin{split}\label{eq:abc5}
& \delta_m | 2R \langle \bolde_{2m-1},\bw_m \rangle + \delta_m | \\
& \qquad \le \alpha \delta_m \left( \sqrt{2R^2 + 2R \vareps_a \langle \bolde_{2m-1},w_a\rangle + 2 R \vareps_b \langle \bolde_{2n-1}, w_b \rangle + \vareps_a |w_a|^2 + \vareps_b |w_b|^2} \right. \\
& \qquad \qquad \left. + \sqrt{2R^2 + 2R \vareps_c \langle \bolde_{2m-1},w_c\rangle + 2 R \vareps_b \langle \bolde_{2n-1}, w_b \rangle + \vareps_c |w_c|^2 + \vareps_b |w_b|^2} \right).
\end{split}\end{equation}
Using the bounds for $\vareps_u |\langle \bolde_{2m-1}, w_u \rangle|$ and $\vareps_u |\langle \bolde_{2n-1}, w_u \rangle|$ as discussed above, we see that \eqref{eq:abc5} is implied by
$$
2R+1 \le 2 \alpha \sqrt{2R^2-4R}
$$
which in turn is implied by 
$$
4(2\alpha^2-1) R^2 - 4(4\alpha^2+1) R - 1 \ge 0
$$
which is equivalent to
\begin{equation}\label{eq:R-restriction-4}
R \ge \frac{4\alpha^2+1 + \alpha \sqrt{16\alpha^2+10}}{4\alpha^2-2}.
\end{equation}

Finally, if $\{a,c\} = \{y_m,z_m\}$ then $|a-c| = 1 + \alpha \delta_m$ and we wish to verify
\begin{equation}\label{eq:abc6}
\bigl| |a-b| - |c-b| \bigr| \le \alpha (1+\alpha\delta_m)
\end{equation}
which we rewrite as
\begin{equation}\begin{split}\label{eq:abc7}
& | 1 - 2R \delta_m \langle \bolde_{2m-1},\bw_m \rangle - \delta_m^2 | \\
& \qquad \le \alpha (1+\alpha\delta_m) \left( \sqrt{2R^2 + 2R \vareps_a \langle \bolde_{2m-1},w_a\rangle + 2 R \vareps_b \langle \bolde_{2n-1}, w_b \rangle + \vareps_a |w_a|^2 + \vareps_b |w_b|^2} \right. \\
& \qquad \qquad \left. + \sqrt{2R^2 + 2R \vareps_c \langle \bolde_{2m-1},w_c\rangle + 2 R \vareps_b \langle \bolde_{2n-1}, w_b \rangle + \vareps_c |w_c|^2 + \vareps_b |w_b|^2} \right).
\end{split}\end{equation}
Following the line of reasoning as in previous cases, we see that \eqref{eq:abc7} is implied by
$$
1 + 2 R \delta_m + \delta_m^2 \le 2\alpha(1 + \alpha \delta_m) \sqrt{2R^2-4R}
$$
which in turn is implied by
$$
2 + 2R \le 2 \alpha \sqrt{2R^2-4R}
$$
which is equivalent to
\begin{equation}\label{eq:R-restriction-5}
R \ge \frac{2\alpha^2+1 + \alpha \sqrt{4\alpha^2+6}}{2\alpha^2-1}.
\end{equation}
If we assume that the parameter $R>0$ is chosen so large that \eqref{eq:R-restriction-1}, \eqref{eq:R-restriction-2}, \eqref{eq:R-restriction-3}, \eqref{eq:R-restriction-4} and  \eqref{eq:R-restriction-5} are satisfied, then we have that $(X,d)$ satisfies the $\SRA(\alpha)$ condition. (Note that the restriction $\alpha>1/\sqrt{2}$ is necessary in order to ensure the existence of a value $R$ satisfying \eqref{eq:R-restriction-4} and \eqref{eq:R-restriction-5}.) This finishes the proof of this example.


\end{document}